\theoremstyle{definition}
\newtheorem{lem}{Lemma}[section]
\newtheorem{cor}[lem]{Corollary}
\newtheorem{prop}[lem]{Proposition}
\newtheorem{thm}[lem]{Theorem}
\newtheorem{defn}[lem]{Definition}
\newtheorem{ex}[lem]{Example}
\newtheorem{question}[lem]{Question}
\newtheorem{notation}[lem]{Notation}
\newcommand{\n}{N}
\newcommand{\no}{N_D^R}
\newcommand{\N}{N_K^F}
\newcommand{\bbn}{\mathbb{N}}
\newcommand{\J}{\mathcal{J}}
\renewcommand{\geq}{\geqslant}
\renewcommand{\leq}{\leqslant}
\newcommand{\Id}{M_{\mathcal{I}^2}}
\newcommand{\Nid}{M_\mathcal{I}}
\newcommand{\Sp}{\text{Spec}(D)}
\newcommand{\norm}[1]{\lVert#1\rVert}
\newcommand{\Max}{\text{Max}}
\newcommand{\mx}{\text{Max}}
\numberwithin{mytheorem}{subsection}
\begin{document}

\subjclass[2020]{Primary 11R04, 13F15; Secondary: 13F05, 11R27}
\author{Jim Coykendall}
\address{School of Mathematical and Statistical Sciences\\
	Clemson University\\
	Clemson, SC 29634}
\email[J.~Coykendall]{jcoyken@clemson.edu}

\author{Richard Erwin Hasenauer}
\address{Department of Mathematics and Statistics\\
	Eastern Kentucky University\\
	Richmond, KY 40475}
\email[R. Hasenauer]{richard.hasenauer@eku.edu}

\title{Norms, Normsets, and Factorization}
\begin{abstract}
We present a development of norms and discuss their relationship to factorization.  In \cite{C96} the first named author introduced the notion of a normset, which is the image of a norm.  A normset is a monoid with its own factorization properties. We discuss in several different environments the relationship between factoring in domains and their respective normsets.  We will also discuss the utility of this notion when it comes to multiplicative ideal theory.    
\end{abstract}

\maketitle
\sloppy

\section{Introduction}
Factorization has been studied since antiquity, and in many ways can be considered one of the most central aspects of all of mathematics. 
Indeed, many aspects of modern mathematics are interwoven historically or currently with one or another aspect of factorization.  With this in mind, the aim of this paper is to discuss the notion(s) of a norm, its image monoid, and its utility in the study of factorization.

In much of modern factorization theory, the arena of interest is an integral domain or a monoid. Of course the monoid setting is perhaps most natural in the sense that it is in this realm in which factorization is stripped down into its bare essentials. Indeed, the monoid setting is where one only need worry about the binary operation that governs the factorization; one is allowed to ignore the pesky addition operation that does not seem to be germane to the issue of factorization.

Of course, domains cannot and should not be completely ignored. Indeed, it is fair to say that domains were, at least at the beginning, at the forefront of the motivating factorization questions, and even at this date there are many tantalizing questions remaining unanswered; even the question on the infinitude of real quadratic UFDs is still open (and has been since the time of Gauss).

There has been much recent progress in the theory of factorization, both in the monoid setting as well as in the domain setting, and there are striking differences in the character of the results and the techniques used. The theory of factorization in monoids is perhaps a more robust one. The setting is less restrictive and there are more sweeping statements that can be made; that pesky addition operation does inhibit structure in the case of integral domains. A dramatic illustration of this can be found in a fairly simple setting. If one considers the submonoid of the natural numbers generated by $2$ and $3$, one obtains all the nonnegative integers with the exception of $1$. In the monoid the elements $2$ and $3$ are the only irreducible elements and the monoid has the ``length-factorial" property. We briefly recall this notion.

\begin{defn}
Let $M$ be an atomic monoid. We say that $M$ is length-factorial if given any two irreducible factorization in $M$ of the same length

\[
\alpha_1\alpha_2\cdots\alpha_n=\beta_1\beta_2\cdots\beta_n
\]

\noindent then there is a permutation $\sigma\in S_n$ such that for all $1\leq i\leq n$, $\alpha_i$ is associated to $\beta_{\sigma(i)}$.
    \end{defn}

In layman's terms, this means that any element of $M$ has at most one factorization of a given length. The length-factorial property was introduced in the paper \cite{CS11} with the terminology ``other half factorial" and then rechristened and exhaustively studied in \cite{CCGS21}.

Nontrivial examples of the length-factorial property exist in abundance in the setting of atomic monoids; a characterization of length-factorial Krull monoids can be found in \cite{GZ21}; and more in \cite{BVZ23}). Concretely, the monoid mentioned above, and more generally, any nonprincipal $2-$generated submonoid of $\mathbb{N}_0$ is length-factorial, but not a UFM. This, however, cannot happen in the setting of atomic integral domains. We recall the following theorem from \cite{CS11} rewritten utilizing current terminology.

\begin{thm}
If $R$ is an atomic length-factorial domain, then $R$ is a UFD.
\end{thm}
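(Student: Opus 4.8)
The plan is to reduce the statement to the half-factorial property, so that length-factoriality only has to be invoked once, at the very end. Recall that an atomic domain $R$ is a UFD if and only if it is both half-factorial (any two factorizations of a given nonzero non-unit into irreducibles have the same length) and length-factorial: in a half-factorial domain any two factorizations of an element automatically have the same length, whereupon length-factoriality matches them up term by term and uniqueness of factorization follows; the converse is immediate. Hence it suffices to prove that \emph{an atomic length-factorial domain $R$ is half-factorial}.

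Suppose not, and among all equations $\alpha_1\cdots\alpha_m=\beta_1\cdots\beta_n$ with all factors irreducible and $m<n$, choose one with $m+n$ minimal. Since $m=1$ would make $\alpha_1$ a product of $n\geq 2$ non-units (impossible for an irreducible), we have $m\geq 2$; and a routine cancellation argument — here using that $R$ is a domain — shows we may further assume no $\alpha_i$ is associated to any $\beta_j$, for otherwise cancelling such a pair produces a shorter equation of the same type. Note that length-factoriality gives \emph{no} information about this equation, because its two sides have different lengths; this is exactly where the monoid analogue fails, since the submonoid $\langle 2,3\rangle$ of $\mathbb{N}_0$ from the introduction is length-factorial but not half-factorial. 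So the additive structure of $R$ must be brought to bear.

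The crux — and the step I expect to be the main obstacle — is to convert the length discrepancy $m<n$ into an \emph{equal-length} violation of length-factoriality, i.e.\ an element with two inequivalent factorizations of the same length. The irreducible $\pi:=\alpha_1$ divides $\beta_1\cdots\beta_n$ but is associated to none of the $\beta_j$, so $\pi$ is not prime; I would exploit this together with addition to build auxiliary elements — for instance by adjusting partial products by multiples of $\pi$, or by factoring elements such as $\beta_1$ plus a sub-product of the $\alpha_i$ — so as to exhibit such a same-length clash. The phenomenon to emulate is the one already visible in $R=k[t^2,t^3]$, where the non-unique factorization $(t^3)^2=(t^2)^3$ forces the length-two clash $t^2\cdot(t^2+2t^3+t^4)=(t^2+t^3)^2$; here the irreducible factor $t^2+2t^3+t^4$ exists only because of addition. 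Once an equal-length inequivalent pair is produced, length-factoriality is contradicted and the proof is complete. The remaining work — checking that the auxiliary elements are non-units, controlling the lengths of their irreducible factorizations, and verifying inequivalence — is the substance of the argument; the surrounding reductions are formal.
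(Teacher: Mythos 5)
Your formal reductions are all correct: an atomic domain is a UFD precisely when it is both half-factorial and length-factorial, so the theorem does reduce to showing that an atomic length-factorial domain is half-factorial; the passage to a minimal unequal-length relation $\alpha_1\cdots\alpha_m=\beta_1\cdots\beta_n$ with $2\leq m<n$ and no $\alpha_i$ associated to any $\beta_j$ is also fine; and you have correctly located where the additive structure must enter, since the monoid $\langle 2,3\rangle$ defeats any purely multiplicative argument. Note that this paper does not actually prove the theorem --- it only recalls it from \cite{CS11} --- so there is no in-paper argument to match yours against; the comparison has to be with that source.

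Measured against it, there is a genuine gap, and it sits exactly where you say you expect ``the main obstacle'' to be: you never construct the auxiliary element that converts the unequal-length relation into an equal-length violation of length-factoriality. Phrases such as ``adjusting partial products by multiples of $\pi$'' and ``factoring $\beta_1$ plus a sub-product of the $\alpha_i$'' are directions to search in, not an argument: for any candidate element you must (a) show it is a nonzero nonunit, (b) produce two irreducible factorizations of it, (c) show they have equal length, and (d) show they are not permutations of one another up to associates --- and each of these requires control over the irreducibility of \emph{sums}, which an abstract atomic domain does not supply. Your $k[t^2,t^3]$ computation illustrates the target but not the method: verifying that $t^2+t^3$ and $t^2+2t^3+t^4$ are irreducible there uses the order of vanishing at $t=0$, a valuation-theoretic tool with no analogue in the general setting. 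This missing construction is not a routine verification to be deferred; it is the entire content of the theorem and of the proof in \cite{CS11}, which proceeds by exhibiting a specific element built from the $\alpha_i$ and $\beta_j$ and carrying out a delicate count of factorization lengths. As written, the proposal is an accurate map of the difficulty rather than a proof that surmounts it.
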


The previous theorem highlights the fact that there can be a bit of a disconnect with what is possible in the setting of monoids and what is possible in the narrower setting of integral domains. Another significant connection is given in the following theorem from \cite{CM11} (note, in the original paper, the word ``atomic'' seems to be missing from the statement of the main theorem, although its use is explicit in the proof).

\begin{thm}
Let $M$ be any atomic, reduced, cancellative, torsion-free monoid. Then there exists an integral domain
with atomic factorization structure isomorphic to $M$.
\end{thm}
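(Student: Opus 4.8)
The plan is to realize $M$ via a monoid ring. Fix a field $K$ and form the monoid algebra $K[M]$, with $K$-basis $\{X^{m}:m\in M\}$ and multiplication $X^{m}X^{m'}=X^{m+m'}$. The first task is to check $K[M]$ is an integral domain: since $M$ is cancellative and torsion-free it embeds in its quotient group $G$, a torsion-free abelian group, which is orderable (embed $G$ in the $\mathbb{Q}$-vector space $G\otimes_{\mathbb{Z}}\mathbb{Q}$ and restrict a total group order). Fixing a compatible total order on $G$ and comparing the largest and smallest terms of two nonzero elements shows $K[G]$ has no zero divisors, so neither does the subring $K[M]$. The same comparison yields the rigidity fact I would lean on repeatedly: if $fg=X^{m}$ in $K[M]$ then $f$ and $g$ are each monomials, so every divisor of $X^{m}$ is a scalar times a monomial $X^{m'}$ with $m'\mid m$ in $M$.

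Next I would pin down the units and the image of $M$. By the same leading-term comparison the units of $K[G]$ are the trivial ones $cX^{g}$ ($c\in K^{\times}$), so a unit of $K[M]$ has the form $cX^{g}$ with $g,-g\in M$; since $M$ is reduced this forces $g=0$, i.e.\ $U(K[M])=K^{\times}$. Hence $m\mapsto[X^{m}]$ is a monoid homomorphism from $M$ to $K[M]^{\bullet}/K^{\times}$ (nonzero elements modulo units) which is injective (an associate relation $X^{m}\sim X^{m'}$ makes $X^{m-m'}$ a unit) and, by the rigidity fact, has divisor-closed image. In particular $X^{a}$ is irreducible in $K[M]$ exactly when $a$ is an atom of $M$, and every irreducible factorization of a monomial $X^{m}$ is a string $X^{a_{1}},\dots,X^{a_{k}}$ recording an atomic factorization $m=a_{1}+\dots+a_{k}$ in $M$. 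So the atomic structure of $M$ is copied faithfully inside $K[M]$.

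The remaining, and genuinely harder, step is to cut $K[M]^{\bullet}/K^{\times}$ down to exactly $M$, since $K[M]$ is usually far too big (already $K[\mathbb{N}_{0}]=K[X]$ has many irreducibles besides $X$). The target is an overring $R$ of $K[M]$ in which every nonzero element becomes, up to a unit, one of the monomials $X^{m}$, while the $X^{a}$ (for $a$ an atom of $M$) remain nonunits and no new atoms appear. When $M$ is factorial this is easy: $K[M]$ is then a polynomial ring, the $X^{a}$ are prime, and inverting the multiplicatively closed set of elements divisible by no $X^{a}$ produces a UFD $R$ whose primes are precisely the $[X^{a}]$, so $R^{\bullet}/U(R)$ is the free monoid on the atom set $\mathcal{A}(M)$, namely $M$, and $R$ is atomic because it is a UFD. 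For general $M$ the atoms need not stay prime, and this is the main obstacle: for a numerical monoid such as $\langle 2,3\rangle$ one checks that $t^{2}+t^{3}$ is an irreducible of $K[t^{2},t^{3}]$ not associated to $t^{2}$ or $t^{3}$, yet any localization inverting it also inverts $t^{2}$, so a naive localization cannot succeed. Handling this is exactly the work the cited construction does: it builds the overring so as to impose precisely the defining relations of $M$ among the classes $[X^{a}]$ while simultaneously forcing atomicity and ruling out any unintended associates or extra atoms, and verifying that the resulting domain $R$ is atomic with $R^{\bullet}/U(R)\cong M$ is the substantive part of the proof.
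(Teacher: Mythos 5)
There is a genuine gap, and in fact the target you set yourself is not just unproved but unachievable. First, note that the paper does not prove this theorem at all: it is quoted from \cite{CM11}, so there is no internal proof to match. Your setup of the monoid algebra $K[M]$ is fine --- it is a domain, its units are $K^{\times}$, monomials only have monomial divisors, and $m\mapsto[X^{m}]$ embeds $M$ into $K[M]^{\bullet}/K^{\times}$ with the atoms of $M$ going to irreducibles. But the substantive content of the theorem is exactly the step you defer to ``the cited construction,'' namely producing an extension of $K[M]$ in which the non-monomial irreducibles are destroyed while the monomial structure is preserved. Saying that the construction ``imposes precisely the defining relations of $M$'' is a description of the goal, not an argument; as written, the proposal proves nothing beyond the elementary facts about $K[M]$.

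More seriously, your stated target --- an overring $R$ in which \emph{every} nonzero element is a unit times a monomial, so that $R^{\bullet}/U(R)\cong M$ --- is impossible for general $M$, and the theorem does not claim it. Take $M=\langle 2,3\rangle\subseteq\mathbb{N}_{0}$, the paper's own running example: $M$ is atomic and length-factorial but not a UFM. If some domain $R$ had $R^{\bullet}/U(R)\cong M$, then $R$ would be an atomic length-factorial domain, hence a UFD by the theorem from \cite{CS11} quoted earlier in the paper, forcing $M$ to be a UFM --- a contradiction. This is precisely why the paper says that for such $M$ the realizing domain \emph{must} be non-atomic, and why it is careful to define the ``atomic factorization structure'' as the submonoid of $R^{\bullet}/U(R)$ generated by the atoms of $R$, not all of $R^{\bullet}/U(R)$. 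The correct statement to aim for is: there is a domain $R$ whose atoms are, up to associates, exactly the $X^{a}$ for $a\in\mathcal{A}(M)$, with no two of these associate and with no new monomial units created, so that the submonoid they generate is $\cong M$; the price is that $R$ acquires non-atomic elements. The actual construction in \cite{CM11} achieves this by an iterated extension (a directed union of monoid domains) in which each unwanted non-monomial irreducible is given a proper factorization at the next stage, together with a verification that the monomials $X^{a}$ survive as pairwise non-associate atoms in the union. None of that appears in your proposal, and your localization heuristic, as you yourself observe with $t^{2}+t^{3}$, cannot supply it.
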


Although we have not formally defined ``factorization structure" here, the idea that should be conveyed is that any reduced, cancellative, torsion-free monoid can be realized as the ``atomic part" of some integral domain. So if $M$ is such a monoid, there exists an integral domain $R$ such that the submonoid of $R$ generated by the atoms, under unit equivalence, is isomorphic to $M$. For many such domains (in particular if $M$ is length-factorial and not a UFM), $R$ {\it must} be non-atomic. It has also been shown (see Examples 4.6 and 4.7 from \cite{HKHK04}) that there exist monoids that cannot be realized as the (reduced) monoid of nonzero elements of any integral domain.

With these facts in mind, we highlight what the authors believe to be one of the most important open questions in factorization theory.

\begin{question}
    Which monoids can be realized as the reduced monoid of an atomic integral domain?
\end{question}

In a certain sense this would be a sort of ``theory of everything" for the interplay of factorization in monoids and domains, and we suspect it to be a question that is difficult to answer in general. For now we will satisfy ourselves with the connections between domains and monoids afforded by the various flavors of norm maps.

The classical notion of the norm is the so-called Dedekind-Hasse norm from algebraic number theory and is the familiar ``product of conjugates." In the case of quadratic fields, this version of the norm is especially useful in discerning elementary factorization properties of the ring, and the norm form is crucial in many results concerning the representation of integers by quadratic forms.

The utility of the norm lies in its reduction to a (seemingly) simpler case. In most applications of the classical Dedekind-Hasse norm, the norm function maps the domain, a potentially complicated ring of integers, into what can be construed as a simpler structure, a subset of $\mathbb{Z}$. Generally, what the norm accomplishes is a transfer of factorization properties from a domain to a multiplicative monoid in which the factorization properties can be isolated and studied.

Presently, we will discuss factorization from a classical algebraic number theory. After examining the classical norm, we expand the definition beyond the classical and explore a norm that uses valuations and places to study factorization in almost Dedekind and  Pr\"{u}fer domains.

\section{Factorization in rings in integers}

The arena of rings of algebraic integers is the classical playground for factorization. It is in this theater where questions of factorization famously became an obstruction to  Lam\'{e}'s 1847 attempted proof of Fermat's Last Theorem. Indeed, if cyclotomic rings of integers of the form $\mathbb{Z}[\zeta_p]$ where $p$ is a prime and $\zeta_p$ is a primitive root of unity were always UFDs, then Fermat's Last Theorem would have been resolved almost two centuries ago. Unfortunately (or perhaps fortunately) this is not true; although this works for all primes $19$ and smaller, it fails for $23$ and {\it all} higher primes (\cite{W1997}). This failure, in turn, led to the study of ideal numbers, Dedekind domains, and generally opened up new horizons in number theory and algebra.

Even in the time of Gauss, there were observations relating structure in rings of integers to behavior of the norms of elements in the ring (which we will define more rigorously later). Famously, Gauss noted that the nonzero elements of $\mathbb{Z}$ which can be expressed as the sum of two squares formed a unique factorization monoid (using the parlance of our times), and perhaps this is one of the first connections between a ring of integers and its set of norms.

Structure should intuitively be lost in the set of norms (and to be sure, some is), but there are significant chunks of information that survive, and in this section, we will highlight valuable insights concerning (orders in) rings of integers that can be gleaned by considering the classical Dedekind-Hasse norm.

The Dedekind-Hasse norm is the familiar product of conjugates norm defined on algebraic number fields; we recall this notion here.

\begin{defn}
Let $\mathbb{Q}\subseteq K\subseteq F$ be fields with $[F:\mathbb{Q}]=n<\infty$. We define the norm function $\N:F\longrightarrow K$ via

\[
\N(\alpha)=\prod_{\sigma\in\Lambda}\sigma(\alpha)
\]
\noindent where the set $\Lambda$ denotes the set of distinct $K-$embeddings $F\longrightarrow\mathbb{C}$ and $\mathbb{C}$ denotes the complex numbers.

\end{defn}

This classical definition gives a function from $F$ down to the smaller field $K$, and it is well known that if the rings of algebraic integers of $F$ and $K$ respectively are $R$ and $D$, then $N_K^F(R)\subseteq D$. We also remark that if the extension $F/K$ is Galois then we have $\Lambda=\text{Gal}(F/K)$ consists of the automorphisms of $F$ that fix $K$.

Beyond this definition of the norm is one with a more ideal-theoretic flavor that can be defined in a more general sense. Indeed, suppose that $D$ is a Dedekind domain with quotient field $K$ and that $F$ is a finite extension of $K$. It is known (\cite{k}) that the integral closure of $D$ in $F$ is Dedekind as well, and this allows us to construct an ideal-theoretic analog of the classical Dedekind-Hasse norm. This norm we will define on ideals by first defining the norm of a prime ideal $\mathfrak{P}\subset R$. We introduce the norm presently, but first we recall a standard result, the proof of which may be found in \cite{G} or \cite{Nark2004} (among many others).

\begin{thm}
Let $D$ be a Dedekind domain with quotient field $K$ and $F$ a finite extension of $K$ with $[F:K]=n$. If $R$ is the integral closure of $D$ in $F$, then $R$ is Dedekind. What is more, if  $\wp\subset  D$ is a prime ideal, then the ideal $\wp R$ factors as

\[
\wp R=\mathfrak{P}_1^{e_1}\mathfrak{P}_2^{e_2}\cdots\mathfrak{P}_k^{e_k}
\]
\noindent with each $\mathfrak{P}_i\subset R$ a prime ideal lying over $\wp$.

\noindent Additionally, we have that 

\[
n=\sum_{i=1}^k e_if_i
\]
\noindent where $f_i:=[R/\mathfrak{P}_i:D/\wp]$, and what is more, if the extension $F/K$ is Galois, then each $e_i=e$ and each $f_i=f$ and $n=efk$.
\end{thm}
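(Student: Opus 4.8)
The statement bundles four assertions: that $R$ is Dedekind, the factorization of $\wp R$, the fundamental identity $n=\sum e_if_i$, and the Galois refinement. The first was already recalled above as a known consequence of Krull--Akizuki (see \cite{k}), so I would simply invoke it. For the factorization, since $R$ is Dedekind the nonzero proper ideal $\wp R$ — it is proper because $R_\wp$ is integral over the local ring $D_\wp$, so $\wp R_\wp\ne R_\wp$ by lying-over — has a unique prime factorization $\wp R=\mathfrak P_1^{e_1}\cdots\mathfrak P_k^{e_k}$; the primes occurring are precisely those containing $\wp R$, and for such a prime $\mathfrak P_i$ one has $\wp\subseteq\mathfrak P_i\cap D\subsetneq D$, so maximality of $\wp$ forces $\mathfrak P_i\cap D=\wp$. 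Thus the $\mathfrak P_i$ are exactly the (finitely many) primes of $R$ lying over $\wp$.

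For the identity, write $\kappa:=D/\wp$ and compute $\dim_\kappa(R/\wp R)$ in two ways. Localizing at $D\setminus\wp$ does not change $R/\wp R$ or $\kappa$ and replaces $D$ by a DVR and $R$ by its integral closure $R_\wp$ in $F$; since $R_\wp$ is a torsion-free $D_\wp$-module which is finitely generated (via the trace pairing when $F/K$ is separable, and by Krull--Akizuki in general) and spans $F$ over $K$, it is $D_\wp$-free of rank $n=[F:K]$, whence $\dim_\kappa(R/\wp R)=n$. On the other hand the $\mathfrak P_i^{e_i}$ are pairwise comaximal, so the Chinese Remainder Theorem gives $R/\wp R\cong\bigoplus_{i=1}^k R/\mathfrak P_i^{e_i}$; filtering $R/\mathfrak P_i^{e_i}$ by the images of the $\mathfrak P_i^{j}$ and observing that each quotient $\mathfrak P_i^{j}/\mathfrak P_i^{j+1}$ is a one-dimensional $R/\mathfrak P_i$-vector space (check after localizing at $\mathfrak P_i$, a DVR, where a uniformizer generates it), we get $\dim_\kappa(R/\mathfrak P_i^{e_i})=e_if_i$. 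Comparing the two counts yields $n=\sum_{i=1}^k e_if_i$.

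In the Galois case, $G=\operatorname{Gal}(F/K)$ acts on $R$ (each $\sigma$ fixes $K$ and carries elements integral over $D$ to such, so $\sigma(R)=R$), fixes $D$ pointwise, and permutes the primes over $\wp$. The crux is transitivity of this action: if a prime $\mathfrak P'$ over $\wp$ lay outside the orbit of $\mathfrak P_1$, I would use CRT to pick $x\in\mathfrak P'$ with $x\equiv 1\pmod{\tau(\mathfrak P_1)}$ for all $\tau\in G$; then $\N(x)=\prod_{\sigma\in G}\sigma(x)$ lies in $D$ and in $\mathfrak P'\cap D=\wp\subseteq\mathfrak P_1$, while each $\sigma(x)\notin\mathfrak P_1$ (since $x\notin\sigma^{-1}(\mathfrak P_1)$), so the product cannot lie in the prime $\mathfrak P_1$ — a contradiction. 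Granting transitivity, applying $\sigma\in G$ to $\wp R=\prod\mathfrak P_i^{e_i}$ and invoking uniqueness of factorization shows the exponents are permuted among themselves, hence all equal to some $e$; and $\sigma$ induces $\kappa$-algebra isomorphisms $R/\mathfrak P_i\xrightarrow{\ \sim\ }R/\sigma(\mathfrak P_i)$, so all $f_i$ equal some $f$. Then $n=\sum e_if_i=kef$.

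The only genuinely heavy ingredient is the equality $\dim_\kappa(R/\wp R)=n$, whose general (possibly inseparable) case rests on the finiteness supplied by Krull--Akizuki; everything else is routine commutative algebra. Since the present theorem is being recalled as classical with \cite{G} and \cite{Nark2004} cited, I would lean on those references for that input and present the comaximality--filtration count and the norm argument for transitivity in full detail.
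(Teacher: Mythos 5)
The paper does not actually prove this theorem: it is recalled as a standard result with the proof delegated to \cite{G} and \cite{Nark2004}, so there is no in-paper argument to compare against. What you have written is the standard textbook proof --- properness of $\wp R$ via lying over, identification of the $\mathfrak{P}_i$ with the primes over $\wp$, the two computations of $\dim_{D/\wp}(R/\wp R)$ (freeness of rank $n$ over the DVR $D_\wp$ on one side, CRT plus the filtration by powers of $\mathfrak{P}_i$ on the other), and the norm/CRT argument for transitivity of the Galois action followed by uniqueness of factorization to equalize the $e_i$ and $f_i$. All of those steps are correct and complete.

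The one genuine flaw is the parenthetical ``(via the trace pairing when $F/K$ is separable, and by Krull--Akizuki in general).'' Krull--Akizuki yields that $R$ is Noetherian of dimension one (hence Dedekind), but it does \emph{not} yield that $R$ is a finitely generated $D$-module, and in the inseparable case module-finiteness can genuinely fail; when it fails, $R_\wp$ need not be $D_\wp$-free of rank $n$ and the fundamental identity degrades to $\sum_{i=1}^k e_if_i\leq n$, with strict inequality possible (this goes back to F.~K.~Schmidt; see, e.g., Serre's \emph{Local Fields}, Ch.~I). So your appeal to Krull--Akizuki does not close the general case --- though to be fair, the theorem as stated in the paper is itself only correct as an equality under a separability or module-finiteness hypothesis, and in the paper's actual setting (rings of algebraic integers, characteristic zero) the trace-pairing argument you give is all that is needed. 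I would either add the separability hypothesis explicitly or replace the equality by $\leq$ in the general claim, and keep the rest of your argument as is.
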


The nonnegative integers $e_i$ are called the ramification indices and the positive integers $f_i$ are called the (inertial) degrees.

This prime ideal decomposition allows us to construct the norm of a prime ideal in the Dedekind domain $R$ via:

\[
N(\mathfrak{P})=\wp^f
\]

\noindent where $\wp=\mathfrak{P}\bigcap D$ and $f=[R/\mathfrak{P}:D/\wp]$.

As $R$ is a Dedekind domain, every nonzero proper ideal of $R$ is a product of prime ideals and this allows us to define the norm globally on the ideals of $R$. With the definition of the norm on prime ideals given above, we now define the norm for any nonzero proper ideal $I\subset R$.

\begin{defn}
Suppose that $I$ is a nonzero proper ideal of $R$ with (unique) prime ideal factorization given by

\[
I=\mathfrak{P}_1\mathfrak{P}_2\cdots\mathfrak{P}_k
\]
\noindent with the prime ideals $\mathfrak{P}_i$ not necessarily distinct, then

\[
N(I)=\wp_1^{f_1}\wp_2^{f_2}\cdots\wp_k^{f_k}
\]

\noindent with $\wp_i=\mathfrak{P}_i\bigcap D$ and $f_i=[R/\mathfrak{P}_i:D/\wp_i]$.
    
\end{defn}

For rings of integers, these notions of norms are the same in that they agree on principal ideals (and so up to units in an elemental sense). In particular if $D\subseteq R$ are rings of algebraic integers, it can be shown that if $x\in R$  then $N((x))=(N_K^F(x))$.

We remark that yet another characterization of the norm in the special case in which the base ring is $D=\mathbb{Z}$ is given by 

\[
N(I)=(\vert R/I\vert)
\]
\noindent which agrees with the above definition in this case (see \cite{IR1982}).

\section{Normsets in Rings of integers}

In this section, we suppose that $K\subseteq F$ is a finite extension of algebraic number fields with corresponding rings of integers $D\subset R$. We begin here with a few properties of the norm that will prove useful in the sequel. 

Before moving on, we note at this juncture that there are transfer homomorphisms from normsets to monoids of weighted zero-sum sequences (see \cite[Theorem 7.1]{BMOS22} and \cite[Theorem 3.2]{GHKZ22} and \cite{HK14} for an earlier contribution in this direction). Transfer homomorphisms preserve, among other things, sets of lengths, elasticities, and the half-factorial property. Although the classical norm that we describe is not a transfer homomorphism in general, the interested reader might be interesting in consulting these sources, along with \cite{G22}, for further information on this and some newer results concerning elasticities as well.

The next theorem is standard and a proof can be found in \cite{hungerford}.

\begin{thm}\label{field}
Suppose that $K\subseteq E\subseteq F$ are number fields. The norm map $N_K^F: F\longrightarrow K$ enjoys the following properties
\begin{enumerate}
\item $N_K^F=N_K^E\circ N_E^F$
\item $N_K^F(\alpha)=0$ if and only if $\alpha=0$.
\item $N_K^F(\alpha\beta)=N_K^F(\alpha)N_K^F(\beta)$ for all $\alpha,\beta\in F$.
\item If $f(t)=a_0+a_1t+\cdots+t^n\in K[t]$ is the monic minimal polynomial of $\alpha\in F$ then $\N(\alpha)=((-1)^na_0)^{[F:K(\alpha)]}$.
\end{enumerate}
\end{thm}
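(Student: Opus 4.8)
The plan is to argue directly from the definition $N_K^F(\alpha)=\prod_{\sigma\in\Lambda}\sigma(\alpha)$, using throughout that all fields in sight have characteristic zero, so every finite extension is separable and $\lvert\Lambda\rvert=[F:K]$. Properties (2) and (3) come essentially for free: each $\sigma\in\Lambda$ is a field homomorphism, hence injective and multiplicative, so $\prod_\sigma\sigma(\alpha\beta)=\big(\prod_\sigma\sigma(\alpha)\big)\big(\prod_\sigma\sigma(\beta)\big)$, which is (3), and $\prod_\sigma\sigma(\alpha)=0$ forces $\sigma(\alpha)=0$ for some $\sigma$, whence $\alpha=0$, which is (2).

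For (4), I would first describe the relevant embeddings explicitly. Let $f$ be the minimal polynomial of $\alpha$ over $K$, of degree $n=[K(\alpha):K]$; by separability $f$ has $n$ distinct roots $\alpha_1=\alpha,\dots,\alpha_n$ in $\mathbb{C}$, and the $K$-embeddings $K(\alpha)\to\mathbb{C}$ are precisely the $n$ maps sending $\alpha\mapsto\alpha_i$. The standard extension theorem for embeddings into an algebraically closed field then shows each of these extends to exactly $[F:K(\alpha)]$ members of $\Lambda$, and every member of $\Lambda$ restricts to one of them; thus $\Lambda$ splits into $n$ blocks of size $[F:K(\alpha)]$, with every $\sigma$ in the $i$-th block satisfying $\sigma(\alpha)=\alpha_i$. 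Hence $N_K^F(\alpha)=\prod_{i=1}^n\alpha_i^{[F:K(\alpha)]}=\big(\prod_{i=1}^n\alpha_i\big)^{[F:K(\alpha)]}$, and expanding $f(t)=\prod_{i=1}^n(t-\alpha_i)$ identifies $\prod_i\alpha_i=(-1)^na_0$. This proves (4); as a byproduct it shows $N_K^F(\alpha)\in K$ for every $\alpha\in F$, a fact I will reuse for (1).

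For (1), write $m=[E:K]$ and $r=[F:E]$, let $\rho_1,\dots,\rho_m$ be the $K$-embeddings $E\to\mathbb{C}$, and let $\tau_1,\dots,\tau_r$ be the $E$-embeddings $F\to\mathbb{C}$. Extend each $\rho_i$ to an automorphism $\widetilde{\rho}_i$ of a fixed algebraic closure of $K$ containing $F$. I would then check that the $mr$ composites $\widetilde{\rho}_i\circ\tau_j\colon F\to\mathbb{C}$ are $K$-embeddings, pairwise distinct — restricting to $E$ recovers the index $i$, and injectivity of $\widetilde{\rho}_i$ then recovers $j$ — so that, by the count $mr=[F:K]=\lvert\Lambda\rvert$, they are exactly the elements of $\Lambda$. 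Then, for $\beta\in F$,
\[
N_K^F(\beta)=\prod_{i,j}\widetilde{\rho}_i\big(\tau_j(\beta)\big)=\prod_i\widetilde{\rho}_i\Big(\prod_j\tau_j(\beta)\Big)=\prod_i\widetilde{\rho}_i\big(N_E^F(\beta)\big)=\prod_i\rho_i\big(N_E^F(\beta)\big)=N_K^E\big(N_E^F(\beta)\big),
\]
where the final equalities use $N_E^F(\beta)\in E$ (the byproduct of (4)) together with $\widetilde{\rho}_i|_E=\rho_i$.

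The hard part will be (1): the essential point is to verify that composing the $E$-embeddings of $F$ with extensions of the $K$-embeddings of $E$ yields every $K$-embedding of $F$ exactly once. This is precisely where the embedding/isomorphism extension theorem and a careful count enter, and where separability (automatic in characteristic zero) is doing the work; everything else is bookkeeping.
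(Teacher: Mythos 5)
Your proof is correct, and it is the standard separability/embedding-counting argument: the paper itself supplies no proof of this theorem, simply citing \cite{hungerford}, and your argument is essentially the one found in that and any comparable reference (multiplicativity and nonvanishing of embeddings for (2)--(3), partitioning $\Lambda$ into $[F:K(\alpha)]$-element fibers over the embeddings of $K(\alpha)$ for (4), and the extension-and-count identification of $\Lambda$ with the $mr$ composites $\widetilde{\rho}_i\circ\tau_j$ for (1)). Nothing further is needed.
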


We now focus on the restriction of the field norm to its ring of integers. We change the notation here to emphasize that the domain and codomains are restricted to the rings of integers.

\begin{thm}\label{ring}
The norm maps enjoy the following additional properties.
\begin{enumerate}
    \item $N_K^F(R)\subseteq D$.
    \item For all $x\in R$, $\no(x)\in U(D)$ if and only if $x\in U(R)$.
    \item If $\alpha\in R$ and $\no(\alpha)$ is an irreducible element of $D$, then $\alpha$ is an irreducible element of $R$.
\end{enumerate}
    \end{thm}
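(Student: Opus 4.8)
The plan is to prove the three parts in order, leaning on the earlier theorems. For part (1), this is the standard fact that the norm of an algebraic integer is an algebraic integer lying in $D$: if $\alpha \in R$ then its minimal polynomial over $K$ has coefficients in $D$ (since $D$ is integrally closed in $K$), and by Theorem \ref{field}(4) the norm $\no(\alpha) = ((-1)^n a_0)^{[F:K(\alpha)]}$ is a power of $\pm a_0 \in D$, hence lies in $D$. Alternatively one cites that $\no(\alpha)$ is a product of conjugates of $\alpha$, each an algebraic integer, and the product is both an algebraic integer and an element of $K$, so it is in $D$.

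For part (2), I would argue both implications using multiplicativity (Theorem \ref{field}(3)). If $x \in U(R)$ with inverse $x^{-1} \in R$, then $\no(x)\no(x^{-1}) = \no(xx^{-1}) = \no(1) = 1$, and since both factors lie in $D$ by part (1), $\no(x) \in U(D)$. Conversely, suppose $\no(x) = u \in U(D)$. Here I would use the description of the norm as a product of conjugates: writing $\no(x) = \prod_{\sigma \in \Lambda} \sigma(x)$ (possibly with a multiplicity exponent $[F:K(x)]$ folded in), one sees that $x$ divides $\no(x)$ in $R$ — more precisely, $\no(x)/x$ is a product of algebraic integers and lies in $F$; one must check it actually lies in $R$. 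The cleanest route: $x$ satisfies a monic polynomial $t^m + a_{m-1}t^{m-1} + \dots + a_0$ over $D$ with $\pm a_0$ an associate (in $D$) of $\no(x)$, hence $a_0 \in U(D)$; then $x \cdot (-a_0^{-1})(x^{m-1} + a_{m-1}x^{m-2} + \dots + a_1) = 1$ exhibits an inverse for $x$ in $R$. So $x \in U(R)$.

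For part (3), suppose $\no(\alpha)$ is irreducible in $D$ and write $\alpha = \beta\gamma$ with $\beta, \gamma \in R$. Applying the norm and multiplicativity, $\no(\alpha) = \no(\beta)\no(\gamma)$ with $\no(\beta), \no(\gamma) \in D$ by part (1). Irreducibility of $\no(\alpha)$ forces one factor, say $\no(\beta)$, to be a unit in $D$; then part (2) gives $\beta \in U(R)$, so the factorization $\alpha = \beta\gamma$ is trivial. Hence $\alpha$ is irreducible (one should also note $\alpha \notin U(R)$, since otherwise $\no(\alpha)$ would be a unit by part (2), contradicting irreducibility, and $\alpha \neq 0$ since $\no(\alpha) \neq 0$).

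The main obstacle is the converse direction of part (2): passing from $\no(x) \in U(D)$ back to $x \in U(R)$ cannot use multiplicativity alone and requires genuinely producing an inverse inside $R$. The integral-dependence argument sketched above (using the constant term of the minimal polynomial and the fact that $\no(x)$ is, up to sign and a power, that constant term, together with part (1) to keep everything in $D$) is the key technical point; everything else is a routine consequence of Theorem \ref{field}.
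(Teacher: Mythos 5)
Your proposal is correct and follows essentially the same route as the paper: part (1) via integrality of the conjugates and integral closedness of $D$ (your ``alternative'' is verbatim the paper's argument), part (2) forward by multiplicativity and backward by extracting a unit constant term from the minimal polynomial of $x$ over $D$ via Theorem \ref{field}(4) and writing down the inverse explicitly, and part (3) by applying multiplicativity and part (2). You correctly identified the converse of (2) as the only nontrivial step, and your handling of it matches the paper's.
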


    \begin{proof}

    For the first statement, since any element of $\N(R)$ is both in $K$ and integral over $D$, we have that $\N(R)\subseteq D$ as $D$ is integrally closed.
        For the second statement, first suppose that $x\in U(R)$. Since there is an element $y\in R$ with $xy=1$, we merely apply the norm and obtain $1=\no(1)=\no(xy)=\no(x)\no(y)$ and so $\no(x)\in U(D)$. 

        On the other hand, suppose that $\no(x)\in U(D)$ and its minimal monic polynomial over $D$ is given by  
        \[
        d_0+d_1t+\cdots+t^n.
        \]
        \noindent By statement (4) of Theorem \ref{field}, we obtain that $d_0\in U(D)$ and as we have

        \[
d_0=-x(d_1+d_2x+\cdots+x^{n-1}),
        \]
\noindent we see that $x\in U(R)$.

Finally suppose that $\alpha\in R$ factors as $\alpha=xy$ with $x,y\in R$. Since $\no(\alpha)$ is irreducible in $D$ and $\no(\alpha)=\no(x)\no(y)$, we have that one of $\no(x)$ or $\no(y)$ is a unit in $D$. Hence by the previous statement, we have that either $x$ or $y$ is a unit in $R$ and so $\alpha$ is an irreducible element of $R$.
\end{proof}

There are a couple of key observations that can be made from the previous theorem and its proof. The first is $\no:R\longrightarrow D$ is really just a monoid homomorphism when the domain and codomain are restricted to nonzero elements. The second observation is that for the third statement of Theorem \ref{ring},  it is not really crucial to determine if $\no(\alpha)$ decomposes in $D$, it is only important to determine if it decomposes in the image of $\no$.

We now introduce the notion of the  normset (monoid). Additionally, as we have carefully introduced the notion of the norm map, we will suppress the scripts in future uses of the norm function.

\begin{notation}
The norm maps $\N$ and $\no$ will be abbreviated and written collectively as $\n$ whenever context justifies. It will also be understood that the domain of $\n$ will be the nonzero elements of $R$.
\end{notation}

\begin{defn}
Let $D\subseteq R$ be rings of algebraic integers and $\n:R\longrightarrow D$ the norm map. We define the normset $\mathfrak{N}$ of $R$ (with respect to $D$) to be $\n(R\setminus\{0\})$, the image of the nonzero elements of $R$ in $D$.
    \end{defn}

    We remark here that $\mathfrak{N}$ is a cancellative, atomic monoid (indeed, $\mathfrak{N}$ is generated multiplicatively by the norms of the irreducible elements of $R$). One of the most useful classes of normsets arises in the case in which $D=\mathbb{Z}$. For this reason, as well as for ease and brevity of exposition, we will restrict to this case from this point forward. That is, our default assumption will be that $R$ is the ring of integers of $F$, a finite extension of $\mathbb{Q}$, and $\mathfrak{N}$ will be the normset of $R$ relative to $\mathbb{Z}$, unless indicated otherwise.

    \begin{ex}\label{ne1}
Let $R=\mathbb{Z}[i]$. For this extension $\mathfrak{N}$ is precisely the set of nonzero integers which can be expressed as the sum of two squares. This multiplicative monoid has been extensively studied from a number theoretic point of view. Using the convention that $p_i$ and $q_i$ denote primes such that $p_i\equiv 1\text{mod}(4)$ and $q_i\equiv 3\text{mod}(4)$, we point out that it is well-known that $\mathfrak{N}$ is given by

\[
\mathfrak{N}=\{2^ap_1^{b_1}p_2^{b_2}\cdots p_k^{b_k}q_1^{2c_1}q_2^{2c_2}\cdots p_t^{2c_t}\vert a, b_i, c_j\in\mathbb{N}_0\}.
\]

Note that the irreducible elements of this multiplicative monoid are precisely the set of elements $\{2\}\bigcup\{p\vert p\text{ is prime 
 and } p\equiv 1\text{mod}(4)\}\bigcup\{q^2\vert q\text{ is prime 
 and } q\equiv 3\text{mod}(4)\}.$ It is now easy to observe that every element of this monoid factors uniquely into irreducibles; in other words, $\mathfrak{N}$ is a UFM (unique factorization monoid).
    
    \end{ex}

\begin{ex}\label{ne2}
We now consider the domain $\mathbb{Z}[\sqrt{-10}]$ and its normset $\mathfrak{N}$ which consists of the nonzero elements of $\{x^2+10y^2\vert x,y\in\mathbb{Z}\}$. Note that in this monoid the elements $10$, $4$, and $25$ can all easily seen to be irreducible, and this gives rise to the nonunique factorization in $\mathbb{N}$

\[
(4)(25)=(10)^2
\]
\noindent and hence this monoid is not a UFM.
    
\end{ex}

We give a final example to demonstrate a more dramatic example of the loss of unique factorization.

\begin{ex}\label{ne3}
We now consider $\mathbb{Z}[\sqrt{-41}]$. For this domain, we have that $\mathfrak{N}$ is the nonzero elements of $\{x^2+41y^2\vert x,y\in \mathbb{Z}\}$. Note that in $\mathfrak{N}$ we have the integers $45$ and $9$, but $5\notin\mathfrak{N}$; this normset is not saturated in the sense that there is an element of the quotient field of $\mathbb{Z}[\sqrt{-41}]$ with integral norm, but no integral element with that norm. Of course, such a monoid cannot have unique factorization as this lack of saturation can be exploited to give the nonunique factorization

\[
(5^2)(9)^2=(45)^2.
\]

\noindent Here the elements in parentheses are irreducible elements of $\mathfrak{N}$; we observe that here the lengths of the factorizations do not even coincide as the left side has three irreducible factors in $\mathfrak{N}$ and the right has two. These phenomena and others will be explored in more detail later.

\end{ex}

We will now look at the interplay between factorization in the normset monoid and factorization in its parent ring of integers in the Galois case. This amount of speciality is not required in general, but for present purposes makes the discussion easier. We also remind the reader that our normset $\mathfrak{N}$ is taken to be in $\mathbb{Z}$. This too can be generalized, but for Theorem \ref{main} and many of its offshoots, one would want the generalized $D$ to also be a UFD.

We begin with a lemma that demonstrates why the symmetry of the Galois case is useful. More general treatments of the relationships between irreducibles in the ring versus irreducibles in the set of norms can be found in \cite{L87} and \cite{C96b}.

\begin{lem}\label{prep}
Let $R$ be a ring of algebraic integers with quotient field $F$. If $F$ is Galois over $K$ and $R$ is a UFD, then $\pi\in R$ is irreducible (prime) if and only if $\n(\pi)$ is an irreducible element of $\mathfrak{N}$.

\end{lem}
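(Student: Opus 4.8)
The plan is to prove the two directions separately, using the Galois hypothesis to get the crucial symmetry that the conjugates of an irreducible are again irreducible. Throughout I write $G = \text{Gal}(F/K)$, so that $\n(\alpha) = \prod_{\sigma \in G}\sigma(\alpha)$ for $\alpha \in R$, and I use that $\sigma(R) = R$ for every $\sigma \in G$ (since $\sigma$ is a ring automorphism of $F$ fixing $K$, it permutes the integral closure $R$ of $D$ in $F$). I also use freely that $R$ is a UFD, so irreducible and prime coincide in $R$, and that $\mathfrak{N}$ is a cancellative atomic monoid.

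First I would do the easy direction. Suppose $\n(\pi)$ is irreducible in $\mathfrak{N}$. By the observation following Theorem~\ref{ring} (the refinement of its third statement), if $\n(\pi)$ is irreducible \emph{in the image} $\mathfrak{N}$, then $\pi$ is irreducible in $R$: indeed if $\pi = xy$ with $x,y \in R$ then $\n(\pi) = \n(x)\n(y)$ with $\n(x),\n(y) \in \mathfrak{N}$, so one factor, say $\n(x)$, is a unit in $\mathfrak{N}$; since the units of $\mathfrak{N}$ are contained in $U(\mathbb{Z}) = \{1\}$ — or more safely, since a unit of $\mathfrak{N}$ is in particular a unit of $\mathbb{Z}$ — statement (2) of Theorem~\ref{ring} gives $x \in U(R)$. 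Hence $\pi$ is irreducible. (Note this direction does not even use the Galois hypothesis.)

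The substantive direction is the converse: assume $\pi \in R$ is prime and show $\n(\pi)$ is irreducible in $\mathfrak{N}$. First, $\n(\pi)$ is a nonunit of $\mathbb{Z}$ (else $\pi \in U(R)$ by Theorem~\ref{ring}(2)), and it is nonzero, so it is a genuine nonunit of $\mathfrak{N}$. Now suppose $\n(\pi) = \n(x)\n(y)$ in $\mathfrak{N}$ with $x,y \in R$; I must show one of $\n(x),\n(y)$ is a unit of $\mathfrak{N}$, i.e.\ equals $\pm 1$, equivalently (by Theorem~\ref{ring}(2)) that $x \in U(R)$ or $y \in U(R)$. The key point is that in $R$ we have $\n(\pi) = \prod_{\sigma \in G}\sigma(\pi)$, and each $\sigma(\pi)$ is prime in $R$ (applying the automorphism $\sigma$ to a prime element gives a prime element). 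So $\n(\pi) = \n(x)\n(y) = \prod_{\sigma}\sigma(x)\cdot\prod_{\sigma}\sigma(y)$ are two factorizations of the same element of the UFD $R$ into primes (the right side regrouped into a product of the primes $\sigma(x)$, $\sigma(y)$, up to the units that appear when $x$ or $y$ is itself a unit — handle that case trivially first). Comparing the two prime factorizations in $R$, the prime $\pi$ must be associate (in $R$) to one of the $\sigma(x)$ or some $\sigma(y)$; say $\pi \sim \sigma_0(x)$ for some $\sigma_0 \in G$. Applying $\sigma_0^{-1}$, $\sigma_0^{-1}(\pi) \sim x$ in $R$, so $\n(x) = \n(\sigma_0^{-1}(\pi)) = \n(\pi)$ up to a unit of $\mathbb{Z}$ (norms are invariant under $G$, and associates have associate norms). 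Then from $\n(\pi) = \n(x)\n(y)$ and cancellativity of $\mathfrak{N} \subseteq \mathbb{Z}$, $\n(y)$ is a unit of $\mathfrak{N}$, hence $y \in U(R)$, as required.

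The main obstacle is bookkeeping rather than depth: one must be careful that when $\pi = xy$-type identities are transported through the norm, the ``units'' floating around are units of $\mathbb{Z}$ and not merely of $\mathbb{Q}$, and that the comparison of prime factorizations in $R$ is legitimate even when $x$ or $y$ is a unit of $R$ (dispose of that case at the outset). The Galois hypothesis is used exactly once but essentially: it guarantees $\n(\pi)$ factors in $R$ as a product of primes all of which are $G$-conjugate to $\pi$, so that matching $\pi$ against the prime factorization of $\n(x)\n(y)$ forces $\pi$ (up to conjugacy and associates) to already divide $x$ or $y$. Without normality, $\n(\pi)$ need not be a product of conjugates of $\pi$ inside $R$, and this is precisely where the more delicate general theory of \cite{L87} and \cite{C96b} is needed.
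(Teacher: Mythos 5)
Your forward direction (irreducible norm implies irreducible element) is fine and is exactly how the paper handles it, as a porism of Theorem~\ref{ring}(3). The converse follows the same basic strategy as the paper's proof --- pass to the identity $\prod_{\sigma}\sigma(\pi)=\prod_{\sigma}\sigma(x)\cdot\prod_{\sigma}\sigma(y)$ in the UFD $R$ and let a prime element divide one of the conjugate factors --- but one step fails as written. You declare the right-hand side to be a factorization into the ``primes'' $\sigma(x),\sigma(y)$ and conclude that $\pi$ is \emph{associate} to some $\sigma_0(x)$. Nothing in the setup forces $x$ or $y$ to be irreducible (that $\n(\pi)$ admits no nontrivial splitting is essentially what you are trying to prove), so $\sigma(x)$ need not be prime, and your parenthetical only disposes of the case where $x$ or $y$ is a unit, not the case where one of them is a reducible nonunit. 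All that primality of $\pi$ actually gives you is that $\pi$ \emph{divides} some $\sigma_0(x)$ or $\sigma_0(y)$.

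The gap is easily closed, and in two ways. Either note that $\sigma_0^{-1}(\pi)\mid x$ yields $\n(\pi)\mid\n(x)$ in $\mathbb{Z}$, which combined with $\n(x)\mid\n(\pi)$ (from $\n(\pi)=\n(x)\n(y)$) forces $\n(x)=\pm\n(\pi)$ and hence $\n(y)=\pm1$, $y\in U(R)$; or first factor $x$ and $y$ into irreducibles in $R$ and compare lengths of the two genuine prime factorizations. The paper sidesteps the issue by a different bookkeeping choice: using atomicity of $\mathfrak{N}$ it arranges the factorization so that $\n(a)$ is irreducible in $\mathfrak{N}$, invokes the easy direction to conclude $a$ is irreducible (hence prime) in the UFD $R$, and lets $a$ be the prime dividing some $\sigma(\pi)$; since $\sigma(\pi)$ \emph{is} irreducible, the associate conclusion is then automatic. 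Your reversed choice of which element plays the role of the dividing prime is what creates the extra case to check. (Also, $U(\mathbb{Z})=\{\pm1\}$, not $\{1\}$, though your hedge already covers this.)
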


\begin{proof}
    The fact that $\pi$ is irreducible if $\n(\pi)$ is irreducible is a porism of the proof of part (3) of Theorem \ref{ring}.

    So now assume that $F$ is Galois over $K$, $R$ is a UFD, and $\n(\pi)=\n(a)\n(b)$ for some $a,b\in R\setminus U(R)$. We further assume that we have arranged the factorization so that $\n(a)$ is irreducible in $\mathfrak{N}$. By the above, we now have that $a$ is irreducible in $R$, and since $R$ is a UFD, is prime. The norm gives rise to the factorization in $R$
    \[
    \prod_{\sigma\in\text{Gal}(F/K)}\sigma(\pi)=\prod_{\sigma\in\text{Gal}(F/K)}\sigma(a)\sigma(b).
    \]

\noindent Since $a$ is prime, it follows that $a$ divides $\sigma(\pi)$ for some $\sigma\in\text{Gal}(F/K)$. As $\pi$ is irreducible, then so is $\sigma(\pi)$ and as $a$ is prime, this means $a$ and $\sigma(\pi)$ are associates. Hence $\n(a)=\n(\sigma(\pi))=\n(\pi)$ and so $\n(b)$ is a unit. Hence $\n(\pi)$ is irreducible in $\mathfrak{N}$.
\end{proof}

\begin{thm}\label{main}
    Let $R$ be a ring of algebraic integers with quotient field $F$. If $F$ is Galois, then the following conditions are equivalent.
    \begin{enumerate}
        \item $R$ is a UFD.
        \item $\mathfrak{N}$ is a UFM.
    \end{enumerate}
\end{thm}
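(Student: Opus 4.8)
The plan is to prove the two implications separately; throughout I use that $R$, being a ring of algebraic integers, is a Dedekind domain — hence atomic, and a principal ideal domain exactly when it is a UFD — that $\operatorname{Gal}(F/\bbq)$ acts transitively on the primes of $R$ over a fixed rational prime $p$ (so these share a common residue degree $f_p$ and are all principal or all non-principal), that only finitely many primes of $R$ lie over a given rational prime, that every ideal class of $R$ is realized by infinitely many prime ideals, and that $\n((x))=(\n(x))$ for $x\in R$. For $(1)\Rightarrow(2)$, assume $R$ is a UFD, hence a PID. I first note that every atom of $\mathfrak{N}$ has the form $\pm p^{f_p}$ for a rational prime $p$: since $\mathfrak{N}$ is generated by $\{\n(\pi):\pi\in R\text{ irreducible}\}$, an atom is associate to some $\n(\pi)$, and as $\pi$ is irreducible in a UFD the ideal $(\pi)$ is prime, hence equals a prime $\mathfrak{P}$ over the rational prime $(\pi)\cap\bbz=(p)$, so $(\n(\pi))=\n((\pi))=\n(\mathfrak{P})=(p^{f_p})$. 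Now given $n\in\mathfrak{N}$, write $n=\n(x)$ and factor $x=\pi_1\cdots\pi_r$ into irreducibles of $R$; by Lemma~\ref{prep} each $\n(\pi_i)$ is an atom of $\mathfrak{N}$, so $n=\prod_i\n(\pi_i)$ is a factorization of $n$ into atoms. If $n=\pm\prod_p(p^{f_p})^{k_p}$ is any such factorization, comparing $p$-adic valuations in $\bbz$ forces $k_p=v_p(n)/f_p$, because no atom other than $p^{f_p}$ involves the prime $p$; hence the factorization is unique and $\mathfrak{N}$ is a UFM.

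For $(2)\Rightarrow(1)$ I argue by contraposition: suppose $R$ is not a UFD, so its class group is nontrivial. Choose a non-principal prime $\mathfrak{P}$ over a rational prime $p$, with $[\mathfrak{P}]$ of order $h\geq 2$, and a non-principal prime $\mathfrak{Q}$ over a rational prime $q\neq p$ with $[\mathfrak{Q}]=-[\mathfrak{P}]$ (again of order $h$). Write $\mathfrak{P}\mathfrak{Q}=(\gamma)$, $\mathfrak{P}^h=(\alpha)$, $\mathfrak{Q}^h=(\beta)$. Then $\gamma$ is irreducible in $R$, since a proper factorization would split $\mathfrak{P}\mathfrak{Q}$ into two proper principal ideals and force $\mathfrak{P}$ or $\mathfrak{Q}$ to be principal; and $\n(\gamma)=\pm p^{f_p}q^{f_q}$ is an atom of $\mathfrak{N}$: if $p^aq^b\in\mathfrak{N}$ with $0\leq a\leq f_p$ and $0\leq b\leq f_q$, then since every prime over $p$ (resp.\ over $q$) has residue degree exactly $f_p$ (resp.\ $f_q$) one gets $f_p\mid a$ and $f_q\mid b$, and the remaining cases $(a,b)=(f_p,0)$ or $(0,f_q)$ are impossible, each producing a principal prime over $p$ or $q$ against the choice of $\mathfrak{P},\mathfrak{Q}$ and Galois conjugacy. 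Finally $(\gamma)^h=\mathfrak{P}^h\mathfrak{Q}^h=(\alpha\beta)$, so $\gamma^h$ and $\alpha\beta$ are associates and therefore
\[
\n(\gamma)^h \;=\; \n(\gamma^h) \;=\; \pm\,\n(\alpha)\,\n(\beta) \;=\; \pm\, p^{f_p h}\,q^{f_q h}.
\]
This integer has two distinct factorizations into atoms of $\mathfrak{N}$: one consisting of $h\geq 2$ copies of the atom $p^{f_p}q^{f_q}$, and another obtained by concatenating an atom factorization of the non-unit $p^{f_ph}=\pm\n(\alpha)$ with one of the non-unit $q^{f_qh}=\pm\n(\beta)$ — every atom occurring in the latter divides a pure power of $p$ or of $q$ in $\bbz$, hence is itself a prime power and is never equal to $p^{f_p}q^{f_q}$. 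Since $p^{f_p}q^{f_q}$ has multiplicity $h$ in the first factorization and $0$ in the second, $\mathfrak{N}$ is not a UFM.

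The step I expect to be the main obstacle is the visibility argument in $(2)\Rightarrow(1)$: ensuring that the non-uniqueness one builds in $R$ survives the passage to $\mathfrak{N}\subseteq\bbz$. Working one rational prime at a time does not suffice, since the numerical submonoid $\{t:p^t\in\mathfrak{N}\}$ can be factorial even when $R$ is not (e.g.\ $\bbz[\sqrt{-5}]$ at $p=3$); the remedy is to couple two rational primes through inverse ideal classes, which is exactly what forces an atom $p^{f_p}q^{f_q}$ straddling both primes and hence incapable of dividing the pure prime powers $p^{f_ph}$, $q^{f_qh}$. The Galois hypothesis enters precisely through the uniform residue degree and the all-or-none principality of conjugate primes used to see that $p^{f_p}q^{f_q}$ is an atom; routine sign and unit bookkeeping (that $U(\bbz)=\{\pm 1\}$, and that one may work modulo associates in $\mathfrak{N}$) has been suppressed.
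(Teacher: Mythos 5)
Your proof is correct, and it shares the paper's overall skeleton: the forward direction rests on Lemma \ref{prep}, and the reverse direction goes by contraposition, coupling a non-principal prime $\mathfrak{P}$ with a prime $\mathfrak{Q}$ in the inverse class over a different rational prime so that $\mathfrak{P}\mathfrak{Q}=(\gamma)$ produces a ``crossed'' atom of $\mathfrak{N}$ admitting a second, incompatible atomic factorization of its powers. The differences are in execution, and they are worth noting. For $(1)\Rightarrow(2)$ the paper proves uniqueness by induction inside $R$, rewriting the equality of norms as an equality of products of conjugates and using primality of $\pi_1$ to pair it with a conjugate of some $\xi_j$; you instead classify every atom of $\mathfrak{N}$ as an associate of $\pm p^{f_p}$ and read uniqueness off $p$-adic valuations in $\mathbb{Z}$, which is arguably cleaner. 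Your suppressed sign bookkeeping does close up: if both $p^{f_p}$ and $-p^{f_p}$ lie in $\mathfrak{N}$ then, since the Galois group acts transitively on the primes over $p$ and $R$ is a UFD, the two are norms of associate-conjugate prime elements, so $-1=\n(u)$ for a unit $u$ and the two atoms are associates in $\mathfrak{N}$. For $(2)\Rightarrow(1)$ the paper normalizes via Chebotarev to unramified, degree-one split primes and uses $(pq)^n=(p^n)(q^n)$ with $n=[F:\mathbb{Q}]$, exploiting that $p^n=\n(p)\in\mathfrak{N}$ automatically; you keep arbitrary residue degrees and raise to the class order $h$ instead, using $\mathfrak{P}^h=(\alpha)$ and $\mathfrak{Q}^h=(\beta)$. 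Both routes need a prime in every ideal class (Chebotarev-type input either way) and both end with the same visibility argument that no atom dividing a pure power of $p$ or of $q$ can be associated to the crossed atom; yours avoids the degree-one normalization at the modest cost of introducing the auxiliary generators $\alpha,\beta$.
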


\begin{proof}
Suppose first that $R$ is UFD and suppose that $\n(x)\in \mathfrak{N}$. If we have a prime factorization of $x$ in $R$ given by 

\[
x=\pi_1\pi_2\cdots\pi_n
\]

\noindent then we have the irreducible factorization of $\n(x)$ in $\mathfrak{N}$ given by

\[
\n(x)=\n(\pi_1)\n(\pi_2)\cdots\n(\pi_n).
\]

\noindent To finish, we need to show that the above factorization is unique, and to this end, suppose that we have the factorizations in $\mathfrak{N}$

\[
\n(\pi_1)\n(\pi_2)\cdots\n(\pi_n)=\n(\xi_1)\n(\xi_2)\cdots\n(\xi_m).
\]

\noindent By Lemma \ref{prep}, we have that each $\pi_i, \xi_j$ is a prime element of $R$. So, if we consider the above as a factorization in $R$, the fact that $\pi_1$ is prime shows that $\pi_1$ is associated to a conjugate of some $\xi_j$ (we will say without loss of generality that $\pi_1$ is associated to a conjugate of $\xi_1$), and hence $\n(\pi_1)$ is associated to $\n(\xi_1)$ as an element of $\mathfrak{N}$. Dividing both sides of the above equation by $\n(\pi_1)$, we proceed by induction to obtain the result.

For the converse, we assume that $R$ is not a UFD. In this case, there is a nonprincipal prime $\mathfrak{P}$ and a prime $\mathfrak{Q}$ that is in the inverse class of $[\mathfrak{P}]$ in $\text{Cl}(R)$. We further stipulate that $\mathfrak{P}$ and $\mathfrak{Q}$ are chosen to be of degree $1$ (i.e. $\n(\mathfrak{P})=(p)$ and $\n(\mathfrak{Q})=(q)$ for primes $p,q\in\mathbb{Z}$) and that $p$ and $q$ are unramified; this can be done by the Chebotarev Density Theorem (see \cite[Corollary 7, p. 347]{Nark2004} for example). Since $F$ is Galois, this means that $p$ and $q$ are split primes (that is, a product of distinct degree $1$ prime ideals of $R$) as all prime ideals containing $p$ (resp. $q$) are conjugate. We also glean from this observation that as $\mathfrak{P}$ (resp. $\mathfrak{Q}$) is nonprincipal, so are all primes in $R$ containing $p$ (resp. $q$), and so no associate of either $p$ or $q$ is in $\mathfrak{N}$.
Now note that $\mathfrak{PQ}$ is principal and generated by $\alpha\in R$, and $\n(\alpha)$ is an associate of $pq$; also note that if $n=[F:K]$ then $p^n, q^n\in\mathfrak{N}$. Since the only divisors of $p^n$ (resp. $q^n$) in $\mathbb{Z}$ are powers of $p$ (resp. $q$), the factorization

\[
(pq)^n=(p^n)(q^n)
\]
\noindent demonstrates that $\mathfrak{N}$ is not a UFM.
\end{proof}

We remark here that the assumption Galois is used for the implication UFM$\Longrightarrow$UFD, but this hypothesis can be relaxed (perhaps) rather significantly. The condition required for the implication UFM$\Longrightarrow$UFD is the technical condition ``norm factorization field extension" (NFF) which may or may not be {\it all} algebraic rings of integers, but this is still open. See \cite{C96} for more details. 

The assumption that $F$ is Galois over $K$ is more critical in the other implication.

\begin{ex}\label{5}
Let $\alpha$ be a root of the polynomial $f(x)=x^5-x^3+1\in\mathbb{Q}[x]$. The field discriminant of $\mathbb{Q}(\alpha)$ is 3017 as is the discriminant of $\mathbb{Z}[\alpha]$ and so this is the full ring of integers. Modulo $3$, $f(x)$ factors as 

\[
x^5-x^3+1=(x^3+x^2+x-1)(x^2-x-1),
\]

\noindent and from this we can deduce that $\text{Gal}(\mathbb{Q}(\alpha)/\mathbb{Q})$ is isomorphic to $S_5$ (as the Galois group must contain an element that decomposes into a $3-$cycle times a transposition and $S_5$ is the only transitive subgroup of $S_5$ with this property). It can also be checked that $\mathbb{Z}[\alpha]$ is a UFD.

As an element of $\mathbb{Z}[\alpha]$, $3$ factors as $3=\alpha\beta$, where the degree of $\alpha$ is $3$ and the degree of $\beta$ is $2$. From this it follows that both $p^3$ and $p^2$ are irreducible elements of $\mathfrak{N}$. So, even though $\mathbb{Z}[\alpha]$ is a UFD, the factorization in the normset

\[
(3^3)^2=(3^2)^3
\]

\noindent shows that $\mathfrak{N}$ is not a UFM (and in fact, even the {\it lengths} of the factorizations differ; this will be explored further down the line).
\end{ex}

What really makes the previous example work is that $2+3=5$ and $2$ and $3$ are relatively prime; this is what allows the construction of the ``bad factorization" in the monoid of norms. It is interesting to note that only the numbers $1,2,3,4,$ and $6$ are the only natural numbers with the property that the smallest (nonzero) element of any partition divides the other elements of the partition. This is the machine that drives the following theorem from \cite{C96}.

\begin{thm}\label{small}
    Let $R$ be a ring of algebraic integers with quotient field $F$, and suppose $[F:\mathbb{Q}]=2,3,4$ or $6$. If $R$ is a UFD then $\mathfrak{N}$ is a UFM.
\end{thm}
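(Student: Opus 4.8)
The plan is to show that, when $R$ is a UFD, the normset $\mathfrak{N}$ — which I will identify with a submonoid of $(\mathbb{Z}_{>0},\cdot)$ by replacing each element with its absolute value, so that ``UFM'' is literally unique factorization into irreducibles — splits as a restricted direct sum over the rational primes of numerical monoids built from residue degrees, and then to read off exactly when this direct sum is a UFM, reducing the whole statement to an elementary fact about partitions of $n=[F:\mathbb{Q}]$.

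First I would record the local structure of $\mathfrak{N}$. Since $R$ is a UFD, every nonzero $x\in R$ is a unit times a product of prime elements, and by Theorem \ref{ring}(2) a unit of $R$ has norm $\pm 1$ (as $U(\mathbb{Z})=\{\pm1\}$); hence $\mathfrak{N}$ is generated as a monoid by $\{|\n(\pi)|:\pi\text{ a prime element of }R\}$. Each prime element $\pi$ lies over a unique rational prime $p$ (namely $\pi R\cap\mathbb{Z}=p\mathbb{Z}$), and using the principal prime ideal $\pi R$, the identity $\n((\pi))=(\n(\pi))$ for rings of integers, and the prime-ideal norm formula $N(\mathfrak{P})=p^{f}$, one gets $|\n(\pi)|=p^{f(\pi)}$ with $f(\pi)$ the residue degree of $\pi R$ over $p$. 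Writing $\mathfrak{P}_1,\dots,\mathfrak{P}_k$ for the primes of $R$ over $p$, with residue degrees $f_1,\dots,f_k$, the ``$p$-part'' of $\mathfrak{N}$ is then the numerical monoid $S_p:=\langle f_1,\dots,f_k\rangle\subseteq\mathbb{N}_0$, and $p^{a}\in\mathfrak{N}$ exactly when $a\in S_p$.

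Next I would assemble the local pieces. Because distinct rational primes contribute powers in multiplicatively coprime parts of $\mathbb{Z}$, unique factorization in $\mathbb{Z}$ yields a monoid isomorphism $\mathfrak{N}\cong\bigoplus_p S_p$ (restricted direct sum), sending $m$ to the tuple of its $p$-adic valuations: the map lands in $\bigoplus_p S_p$ by grouping the prime factors of a given $x\in R$ according to the rational prime under each, and it is surjective because any admissible tuple of $p$-parts can be multiplied back together inside $\mathfrak{N}$. Consequently $\mathfrak{N}$ is a UFM if and only if every $S_p$ is a UFM. A numerical monoid $\langle g_1,\dots,g_k\rangle$ is a UFM precisely when it is cyclic, i.e. when its least positive element divides all the $g_i$: if two distinct atoms $g<g'$ occur, then ``$g'$ copies of $g$'' and ``$g$ copies of $g'$'' are two genuinely different atomic factorizations of $gg'$ (their lengths differ). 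So the task becomes: for every rational prime $p$, show $\min_i f_i$ divides every $f_j$. This is where the hypothesis enters: the fundamental identity $\sum_{i=1}^k e_if_i=n$ with all $e_i\geq1$ exhibits the multiset consisting of each $f_i$ taken $e_i$ times as a partition of $n$ whose smallest part is $\min_i f_i$, and for $n\in\{2,3,4,6\}$ a finite check shows that in every partition of $n$ the smallest part divides all parts; hence $\min_i f_i\mid f_j$ for all $j$, each $S_p$ is cyclic, and $\mathfrak{N}$ is a UFM.

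I expect the main obstacle to be the identification $\mathfrak{N}\cong\bigoplus_p S_p$: this is the step that genuinely uses the UFD hypothesis — to factor an arbitrary element of $R$ into primes, each sitting over a single rational prime, so that the multiplicative norm distributes cleanly — and one must also be careful that the generator of each local piece is really $p^{f}$ and not some other divisor of $p^{n}$, which is exactly the content of the prime-ideal norm formula together with $\n((\pi))=(\n(\pi))$. Everything afterward, namely the classification of UFM numerical monoids and the partition verification for $n\in\{2,3,4,6\}$, is routine.
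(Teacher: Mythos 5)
Your argument is exactly the mechanism the paper attributes to \cite{C96}: reduce to the prime-by-prime numerical monoids $S_p=\langle f_1,\dots,f_k\rangle$, observe that $S_p$ is a UFM precisely when it is cyclic, i.e.\ when the smallest residue degree over $p$ divides all the others, and then feed the fundamental identity $\sum_i e_if_i=n$ into the elementary fact that every partition of $n\in\{2,3,4,6\}$ has its smallest part dividing all parts. That core is correct, including your classification of UFM numerical monoids and the decomposition $|\mathfrak{N}|\cong\bigoplus_pS_p$ (which, as you note, uses that a UFD ring of integers is a PID, so every prime ideal over $p$ has a prime-element generator of norm $\pm p^{f_i}$).

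The one step that is asserted rather than proved is the very first one: replacing $\mathfrak{N}$ by its image under absolute value. The normset as defined in this paper is the honest image of the norm, signs included, and $|\cdot|:\mathfrak{N}\to|\mathfrak{N}|$ is an isomorphism up to units only when either $-1\in\mathfrak{N}$ or no integer $m$ has both $m$ and $-m$ in $\mathfrak{N}$. If neither holds, the UFM property can genuinely differ between the two monoids: for instance, if $p^{d}$ and $-p^{d}$ are both atoms of $\mathfrak{N}$ while $-1\notin\mathfrak{N}$, then $p^{2d}=(p^{d})(p^{d})=(-p^{d})(-p^{d})$ gives two factorizations into non-associate atoms, so $\mathfrak{N}$ fails to be a UFM even though $|\mathfrak{N}|$ is free. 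This obstruction is vacuous for odd $n$ (where $-1=\n(-1)\in\mathfrak{N}$), for totally imaginary $F$ (all norms positive), in the Galois case (all primes over $p$ are conjugate and so share a single norm), and for real quadratic fields (in a PID, balancing the exponents of split primes shows that $-1\in\n(F^{*})$ forces a unit of norm $-1$). But for a non-Galois quartic or sextic field with a real embedding and $\n(U(R))=\{1\}$, two primes over the same rational prime can a priori have norms of opposite sign without forcing $-1\in\mathfrak{N}$, and nothing in your write-up rules this out. So either state explicitly that you are working with the normset up to sign (a convention the paper itself flirts with, e.g.\ in its definition of ``saturated''), or add an argument controlling the signs of the norms of prime elements in the remaining even-degree, non-Galois, not-totally-imaginary cases.
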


We conclude this section with some practical observations on determining unique factorization in a normset. First recall the Minkowski bound used for determining the class structure of a ring of algebraic integers.

\begin{thm}\label{mink}
Let $R$ be a ring of algebraic integers; then in
every ideal class of $R$ there is an (integral) ideal of norm less than or equal to

\[
M=\frac{n!}{n^n}(\frac{4}{\pi})^{r}\sqrt{\vert d_F\vert}
\]
 with $n$ being the degree of the extension, $r$ one-half the number
of complex embeddings, and $d_F$ the discriminant of the field.
\end{thm}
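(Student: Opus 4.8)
The plan is to prove this via the geometry of numbers, so the first task is to realize $R$ as a lattice. I would fix the $n$ distinct embeddings $F\hookrightarrow\bbc$, grouping them into $r_1$ real embeddings and $r$ conjugate pairs of complex embeddings (so $n=r_1+2r$, matching the $r$ of the statement), and assemble them into the canonical embedding $\sigma\colon F\longrightarrow\bbr^{r_1}\times\bbc^{r}\cong\bbr^n$. The key structural inputs are that $\sigma(R)$ is a full lattice whose covolume is $2^{-r}\sqrt{|d_F|}$, and that for any nonzero integral ideal $I$ the image $\sigma(I)$ is a full sublattice of covolume $N(I)\,2^{-r}\sqrt{|d_F|}$ (using $[R:I]=N(I)$, which follows from the Dedekind structure and multiplicativity of the ideal norm already in play above). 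I would record these two facts as the geometric dictionary translating ``ideal norm'' into ``lattice covolume.''

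Next I would set up Minkowski's convex body theorem: a centrally symmetric convex region $B\subset\bbr^n$ with $\operatorname{vol}(B)\geq 2^n\cdot\operatorname{covol}(\Lambda)$ (using the closed version, or a limiting argument as the region is enlarged) must contain a nonzero point of the lattice $\Lambda$. The right region to feed in is not a cube but the ``mixed cross-polytope''
\[
B_t=\Big\{(y_1,\dots,y_{r_1},z_1,\dots,z_r):\ \textstyle\sum_i|y_i|+2\sum_j|z_j|\leq t\Big\},
\]
whose volume works out to $\operatorname{vol}(B_t)=2^{r_1}(\pi/2)^{r}\,t^n/n!$. The decisive feature of $B_t$ is that the AM--GM inequality bounds the field norm pointwise: for $\alpha\in F$ one has $|N_{\bbq}^F(\alpha)|=\prod_i|y_i|\prod_j|z_j|^2\leq\big(t/n\big)^n$ on $B_t$, because $\sum_i|y_i|+2\sum_j|z_j|$ is exactly the sum of the $n$ factors appearing in the norm product.

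With these in hand the argument assembles quickly. Given an ideal class $C$, choose an integral ideal $I$ in the inverse class $C^{-1}$, and select $t$ so that $\operatorname{vol}(B_t)=2^n\operatorname{covol}(\sigma(I))$; a short computation with $n=r_1+2r$ gives $t^n=\tfrac{n!}{1}(4/\pi)^{r}\sqrt{|d_F|}\,N(I)$. Minkowski's theorem then yields a nonzero $\alpha\in I$ with $|N_{\bbq}^F(\alpha)|\leq(t/n)^n=\tfrac{n!}{n^n}(4/\pi)^{r}\sqrt{|d_F|}\,N(I)$. Since $\alpha\in I$, we may write $(\alpha)=IJ$ for an integral ideal $J$; as $(\alpha)$ is principal, $[J]=[I]^{-1}=C$, and by multiplicativity $N(J)=|N_{\bbq}^F(\alpha)|/N(I)\leq\tfrac{n!}{n^n}(4/\pi)^{r}\sqrt{|d_F|}=M$, so $J$ is the desired ideal of norm at most $M$ in the class $C$. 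The main obstacle is the volume computation $\operatorname{vol}(B_t)=2^{r_1}(\pi/2)^{r}t^n/n!$, which requires an induction on the number of real and complex coordinates (peeling off one $\int|y|$ or one polar-coordinate $\int|z|^2$ factor at a time), together with the clean bookkeeping of the discriminant covolume and the factors of $2$; everything after that is algebraic routine.
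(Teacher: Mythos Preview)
Your proof plan is the standard geometry-of-numbers argument and is correct; the covolume bookkeeping, the choice of the cross-polytope $B_t$, the AM--GM bound on the norm, and the passage from $\alpha\in I$ to the integral ideal $J$ in the desired class are all set up properly. Note, however, that the paper does not supply its own proof of this theorem: it is quoted as a classical result (the Minkowski bound) and used as a black box, so there is no in-paper argument to compare against. What you have written is exactly the proof one finds in standard references such as Narkiewicz or Marcus.
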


\noindent This $M$ is called the {\it Minkowski bound} for $R$.

Now suppose that $\mathfrak{P}$ is a prime ideal of $R$ lying over the prime $(p)\subseteq\mathbb{Z}$ with degree $f_p$. If $M$ is the Minkowski bound for $R$, we define the set 

\[
P=\{p\in\mathbb{Z}\vert p\text{ is prime and }p^{f_p}\leq M\}.
\]

The following result gives a criterion on the normset for unique factorization. It could be considered a small improvement on \cite[Theorem 3.4]{C96}.

\begin{thm}
Let $R$ be the ring of integers of $F$, a Galois field extension of $\mathbb{Q}$. Then $R$ is a UFD if and only if for every $p\in P$, $\pm p^{f_p}\in \mathfrak{N}.$
\end{thm}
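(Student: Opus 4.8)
The plan is to prove this criterion by combining Theorem \ref{main} (which says that for Galois $F$, $R$ is a UFD iff $\mathfrak{N}$ is a UFM) with the classical Minkowski bound (Theorem \ref{mink}), and then showing that unique factorization in $\mathfrak{N}$ can be detected purely on the finite set $P$ of "small" primes. First I would dispatch the easy direction: if $R$ is a UFD then, for any prime $p$ with a prime ideal $\mathfrak{P}$ over it of degree $f_p$, the Minkowski bound is irrelevant — the ideal $\mathfrak{P}$ is principal, say $\mathfrak{P}=(\pi)$, and then $\n(\pi)=\pm p^{f_p}$, so $\pm p^{f_p}\in\mathfrak{N}$. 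In particular this holds for all $p\in P$. (One should note $F$ Galois guarantees all primes over $p$ have the same degree $f_p$, so the notation is well-defined.)

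For the converse, suppose $\pm p^{f_p}\in\mathfrak{N}$ for every $p\in P$; I want to conclude $R$ is a UFD. Arguing contrapositively, assume $R$ is not a UFD, so the class group is nontrivial. By the Minkowski bound, every ideal class is represented by an integral ideal of norm $\leq M$, hence (being a product of primes of norm $\leq M$) the class group is generated by the classes of prime ideals $\mathfrak{P}$ with $\n(\mathfrak{P})=(p^{f_p})\leq M$, i.e. with $p\in P$. Since $\Cl(R)$ is nontrivial, at least one such prime ideal $\mathfrak{P}$ lying over some $p\in P$ must be nonprincipal. Now I would invoke the argument inside the proof of Theorem \ref{main}: since $F$ is Galois, all primes over $p$ are conjugate, hence all nonprincipal, so no associate of $p$ — and more to the point, no element of norm $(p^{f_p})$ coming from a single prime ideal — can lie in $\mathfrak{N}$. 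This is precisely the statement that $\pm p^{f_p}\notin\mathfrak{N}$ (since $\n(\pi)=\pm p^{f_p}$ would force $(\pi)=\mathfrak{P}$ by comparing norms, up to conjugation, as $p$ is unramified or not — care needed here), contradicting our hypothesis.

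The main obstacle, and the step requiring the most care, is the claim that a nonprincipal prime ideal $\mathfrak{P}$ over $p$ (with $p\in P$) forces $\pm p^{f_p}\notin\mathfrak{N}$. One must rule out that $p^{f_p}$ arises as the norm of some element $\alpha$ whose ideal $(\alpha)$ is a product of several primes over $p$ — but since all such primes are conjugate of degree $f_p$, any such product has norm $(p^{m f_p})$ with $m\geq 1$, and $\n(\alpha)=\pm p^{f_p}$ forces $m=1$, so $(\alpha)=\mathfrak{P}_i$ for a single prime $\mathfrak{P}_i$ over $p$, which is conjugate to $\mathfrak{P}$ and hence also nonprincipal — contradiction. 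Ramification needs a remark: if $p$ ramifies the exponents change, but the argument that $\n(\alpha)=\pm p^{f_p}$ pins down $(\alpha)$ to a single (conjugate, nonprincipal) prime still goes through since all primes over $p$ have the same degree and ramification index in the Galois case. Conversely one must also check that the finiteness of $P$ is genuinely used — it is, implicitly, only in that $P$ is the set of primes that can appear, so the hypothesis "$\pm p^{f_p}\in\mathfrak{N}$ for all $p\in P$" is a finite, checkable condition; the logic itself only needs that the class group is generated by primes over $p\in P$.
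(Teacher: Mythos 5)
The paper states this theorem without proof (deferring to the cited source for a weaker version), but your argument is correct and is evidently the intended one: the Minkowski bound shows $\Cl(R)$ is generated by classes of primes lying over the $p\in P$, Galois conjugacy makes all primes over a given $p$ simultaneously principal or nonprincipal, and an element of norm $\pm p^{f_p}$ must generate a single prime over $p$ since every prime over $p$ has norm exactly $p^{f_p}$. The two points you flag as needing care --- pinning $(\alpha)$ down to a single prime ideal by comparing exponents, and checking that ramification does not disturb this --- are exactly where the content lies, and you handle both correctly.
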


\section{The Norm Group}

Since in many cases, the normset monoid factorization behavior is in lockstep with the factorization of the ring of integers, it is natural to discover the extent of this connection. As in the previous section, we have that $R$ is the ring of integers of $F$, a Galois extension of $\mathbb{Q}$.

With Examples \ref{ne1}, \ref{ne2}, and \ref{ne3} in mind, we make the following preliminary and perhaps naive definition. This can be generalized to a wider arena, but we will again stick to normsets over $\mathbb{Z}$.

\begin{defn}
Let $R$ be the ring of integers of $F$, a Galois extension of $\mathbb{Q}$. We say that the normset $\mathfrak{N}$ is
\begin{enumerate}
\item saturated if given $x,y\in\mathfrak{N}$ such that $\frac{y}{x}\in \mathbb{Z}$ then $\pm\frac{y}{x}\in\mathfrak{N}$,
\item strictly saturated if given $x,y\in\mathfrak{N}$ such that $\frac{y}{x}\in \mathbb{Z}$ then $\frac{y}{x}\in\mathfrak{N}$, and
\item strongly saturated if given $\alpha,\beta\in R$ such that $\frac{\n(\beta)}{\n(\alpha)}\in\mathbb{Z}$, then there is a $\gamma\in R$ with $\n(\gamma)=\n(\alpha)$ and $\gamma\vert\beta$ in $R$.
\end{enumerate}
\end{defn}

The concept of saturation in the set of norms has a strong connection to factorization properties that we will see presently. It is interesting to note that the notions of ``strictly saturated" and ``strongly saturated" given above are shown to be equivalent in \cite{CT1997}. So, the strict saturation property of the set of norms, that is $\frac{\n(\beta)}{\n(\alpha)}$ is an element of $\mathbb{Z}$ implies that $\frac{\n(\beta)}{\n(\alpha)}$ is an element of $\mathfrak{N}$, actually guarantees that there is a divisor of $\beta$ in $R$ with norm coinciding with $\n(\alpha)$.

The first ``saturation" property (which may be visualized as saturation up to unit equivalence) is a bit different, and we shall develop some machinery along the way that will make generating an example to demonstrate this a relatively straightforward task.

 We now consider the following extension of the normset $\mathfrak{N}$, which we will call the {\it extended normset} (modulo unit equivalence $\sim$).

\[
\mathfrak{N}^{\text{ext}}=\{a\in R\vert (a)=\n(I)\text{ for some integral ideal }I\subset R\}/\sim
\]

We wish to reiterate for clarity $\mathfrak{N}^{\text{ext}}$ is a collection of equivalence classes in which we make no distinction between generators of principal ideals (associates). Notice that $\mathfrak{N}$ modulo unit equivalence is contained in $\mathfrak{N}^{\text{ext}}$. This allows us to make the following definition.

\begin{defn}
Using the notation from above, we define the norm group of $R$ to be $G:=Q(\mathfrak{N}^{\text{ext}})/Q(\mathfrak{N})$, where the notation $Q(X)$ denotes the quotient group of the monoid $X$.
\end{defn}

Here we will focus on details of $G$ that measure saturation and other factorization effects in $R$, but the interested reader is encouraged to consult \cite{C96b} for a deeper discussion.

This first result is a key observation with regard to connections between saturation behavior in the normset and structure of the class group. This theorem is not too difficult to prove and can be found in \cite{C96b}.

\begin{thm}
There is an exact sequence

\begin{center}
$\xymatrix{
1\ar@{->}[r]	& H\ar@{->}^\iota[r]	&\text{Cl}(R)\ar@{->}^\pi[r]	&G\ar@{->}[r]	&1}$
\end{center}

\noindent where $H=\{[I]\in\text{Cl}(R)\vert\text{$I$ has norm equal to the norm of a principal ideal}\}$, $\iota$ is the inclusion map, and $\pi$ is given by $\pi([I])=[\n(I)]$.

    \end{thm}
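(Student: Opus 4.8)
The plan is to exhibit $\pi$ as the map induced on ideal class groups by the norm homomorphism between groups of fractional ideals, and then to compute its kernel. Throughout, extend the ideal norm to a homomorphism $\n:\mathcal{F}(R)\to\mathcal{F}(\mathbb{Z})$ of groups of nonzero fractional ideals; this is well defined because $\n$ is multiplicative on integral ideals. The first point is a pair of identifications. Sending the class of $a\in R$ with $aR=\n(I)R$ to the positive integer generating $\n(I)$ identifies $\mathfrak{N}^{\text{ext}}$ with the monoid $\{\,\n(I):I\text{ an integral ideal of }R\,\}$, and (using that $\n((\alpha))=(\n(\alpha))$ on principal ideals) it identifies $\mathfrak{N}$ with $\{\,\n((\alpha)):\alpha\in R\setminus\{0\}\,\}$, the norms of principal integral ideals. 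Clearing denominators (and using $\n(dR)=(d^{n})$ for $d\in\mathbb{Z}$, where $n=[F:\mathbb{Q}]$, to see that norms of fractional ideals are quotients of norms of integral ones) upgrades these to identifications of the groups of quotients: $Q(\mathfrak{N}^{\text{ext}})=\n(\mathcal{F}(R))$ and $Q(\mathfrak{N})=\n(\mathcal{P}(R))$, where $\mathcal{P}(R)\le\mathcal{F}(R)$ is the subgroup of principal fractional ideals. Hence $G=\n(\mathcal{F}(R))/\n(\mathcal{P}(R))$.

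With this in hand, $\n$ maps $\mathcal{P}(R)$ into $\n(\mathcal{P}(R))=Q(\mathfrak{N})$, so it descends to a homomorphism $\pi:\text{Cl}(R)=\mathcal{F}(R)/\mathcal{P}(R)\longrightarrow G$ with $\pi([I])=[\n(I)]$, exactly as in the statement; in particular $\pi$ is well defined on classes, since choosing a different integral representative of $[I]$ changes $\n(I)$ by an element of $\n(\mathcal{P}(R))=Q(\mathfrak{N})$. Because $\n:\mathcal{F}(R)\to Q(\mathfrak{N}^{\text{ext}})$ is onto, $\pi$ is surjective, which gives exactness at $G$.

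The core of the proof is the identification $\ker\pi=H$. Fix $[I]\in\text{Cl}(R)$ with $I$ integral. If $[I]\in\ker\pi$ then $\n(I)\in Q(\mathfrak{N})$, and since $Q(\mathfrak{N})$ is the group of quotients of the monoid $\mathfrak{N}$ we may write $\n(I)=\n((\alpha))\n((\beta))^{-1}$ with $\alpha,\beta\in R\setminus\{0\}$; thus $\n(I\beta)=\n(I)\n((\beta))=\n((\alpha))$, so the integral ideal $I\beta$ has norm equal to the norm of a principal ideal. Since $(\beta)$ is principal, $[I\beta]=[I]$, and therefore $[I]\in H$. Conversely, if $[I]\in H$, pick an integral ideal $J$ with $[J]=[I]$ and $\n(J)=\n((\alpha))$ for some $\alpha\in R\setminus\{0\}$; then $\pi([I])=\pi([J])=[\n((\alpha))]=1$ in $G$. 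Hence $\ker\pi=H$; in particular $H$ is a subgroup of $\text{Cl}(R)$, the sequence is exact at $H$ (the inclusion $\iota$ is injective) and at $\text{Cl}(R)$ (the equality $\ker\pi=H$ just proved), and the proof is complete.

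I expect the kernel step to be the delicate one. The phrase ``$I$ has norm equal to the norm of a principal ideal'' must be read as a property of the \emph{class} $[I]$ (some integral representative has this property), and both implications have to be routed through integral ideals, using repeatedly that multiplying by a principal ideal does not change the class. The identifications in the first paragraph are routine but rely on the reminders that norms of $R$-ideals are $\mathbb{Z}$-ideals and that the ideal norm agrees with the element norm on principal ideals.
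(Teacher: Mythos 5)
Your argument is correct. The paper itself gives no proof of this theorem, deferring to \cite{C96b}, but your route --- extending the ideal norm to a homomorphism on fractional ideals, identifying $Q(\mathfrak{N}^{\text{ext}})$ with $\n(\mathcal{F}(R))$ and $Q(\mathfrak{N})$ with $\n(\mathcal{P}(R))$, and then reading off surjectivity and the kernel --- is the natural one and matches what the cited source does. You are also right to flag that ``$I$ has norm equal to the norm of a principal ideal'' must be read existentially over integral representatives of the class: the $\mathbb{Z}[\sqrt{-41}]$ example in the paper (where $5=45/9\in Q(\mathfrak{N})$ is the norm of a prime ideal but $5\notin\mathfrak{N}$) shows the literal reading of $H$ would not coincide with $\ker\pi$, so that clarification is a genuine and necessary part of the proof rather than a pedantic aside.
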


The previous theorem is the linchpin to the following.

\begin{thm}\label{big}
    Let $R$ be a ring of algebraic integers with quotient field $F$. If $F$ is Galois over $\mathbb{Q}$, then the following conditions are equivalent.
    \begin{enumerate}
        \item $\mathfrak{N}$ is saturated.
        \item $H=1$.
        \item $\text{Cl}(R)\cong G$.
        \item $\text{Gal}(F/\mathbb{Q})$ acts trivially on $\text{Cl}(R)$.
    \end{enumerate}
\end{thm}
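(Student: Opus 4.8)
The plan is to run the equivalences through the exact sequence $1 \to H \to \text{Cl}(R) \xra{\pi} G \to 1$ of the previous theorem, with the Galois action on $\text{Cl}(R)$ as the new ingredient. The equivalence of (2) and (3) is immediate from the exact sequence: $H = 1$ exactly when $\iota$ is trivial, i.e. exactly when $\pi$ is an isomorphism. For (1)$\Leftrightarrow$(2), recall that $H$ consists of classes $[I]$ such that $\n(I) = \n(J)$ for some principal ideal $J$; I would show $\mathfrak{N}$ is saturated iff every such $I$ is itself principal. One direction: if $\mathfrak{N}$ is saturated and $[I] \in H$, then $\n(I)$ and a principal norm agree up to an integer ratio (in fact are equal up to units as ideals of $\mathbb Z$), so saturation forces a generator of $\n(I)$ into $\mathfrak{N}$ and one chases back — using that $F/\mathbb Q$ is Galois so that all primes over a given rational prime are conjugate — to conclude $I$ is principal. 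Conversely, if $H = 1$ and $x, y \in \mathfrak{N}$ with $y/x \in \mathbb Z$, write $x = \n(\alpha)$, $y = \n(\beta)$; then $(y/x)$ as an ideal of $\mathbb Z$ is a quotient of norms, hence (again using Galois, so $\n$ is multiplicative on the relevant conjugate factorizations) equals $\n(I)$ for an integral ideal $I$ dividing $(\beta)$, and $[I] \in H = 1$ makes $I$ principal, putting $\pm y/x \in \mathfrak N$.

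The crux is (1)/(2)$\Leftrightarrow$(4), the statement that $H = 1$ iff $\text{Gal}(F/\mathbb Q)$ acts trivially on $\text{Cl}(R)$. For (4)$\Rightarrow$(2): if the Galois action is trivial and $[I] \in H$, say $\n(I) = \n(J)$ with $J$ principal, then since $F/\mathbb Q$ is Galois the ideal $\n(I) R$ equals $\prod_{\sigma} \sigma(I)$ (up to the contribution of inertial degrees, which must be accounted for carefully — this is where one reduces to the degree-$1$, unramified primes via Chebotarev, exactly as in the proof of Theorem \ref{main}). Triviality of the action gives $\sigma(I) \sim I$ for all $\sigma$, so $\n(I) R \sim I^{[F:\mathbb Q]}$ in $\text{Cl}(R)$, and similarly $\n(J) R \sim J^{[F:\mathbb Q]} \sim (1)$; hence $I^{[F:\mathbb Q]}$ is principal. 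To upgrade "$I^n$ principal" to "$I$ principal" one needs more: I would instead argue directly that for a degree-$1$ unramified prime $\mathfrak P$ over $p$, the class $[\mathfrak P]$ lies in $H$ (take $J = \mathfrak P \sigma_2(\mathfrak P)\cdots$, whatever principal multiple is available — more cleanly, $\n(\mathfrak P) = (p)$ and if $(p) = \n(J)$ is realized then..., adjusting as needed) and conversely, so that $H$ is precisely the subgroup generated by classes $[\mathfrak P] - [\sigma \mathfrak P]$; triviality of the action kills all of these. This reformulation — identifying $H$ with the subgroup of $\text{Cl}(R)$ generated by $\{[\sigma I] \cdot [I]^{-1} : I \text{ integral}, \sigma \in \text{Gal}(F/\mathbb Q)\}$, equivalently the image of the augmentation-type map measuring failure of Galois-triviality — is the real content, and I'd prove it by decomposing an arbitrary $[I] \in H$ into prime classes and matching $\n$ on each conjugacy class of primes.

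For the reverse (2)$\Rightarrow$(4): suppose the action is nontrivial, so there is an ideal $I$ and $\sigma$ with $[\sigma I] \neq [I]$. Then $[\sigma I][I]^{-1}$ is a nontrivial class which, by the identification above, lies in $H$ — concretely, $\n(I \cdot \sigma I^{-1}\cdots)$-type manipulations or simply noting $\n(I) = \n(\sigma I)$ shows the relevant fractional ideal $(\sigma I) I^{-1}$ has norm the norm of a principal (unit) ideal, yet is nonprincipal — contradicting $H = 1$. One has to be slightly careful that $H$ was defined for integral ideals; clearing denominators by multiplying through by a suitable principal ideal fixes this without changing the class.

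I expect the main obstacle to be the bookkeeping around inertial degrees and ramification when relating $\n(I) R$ to $\prod_\sigma \sigma(I)$: the naive identity $\n(I) R = \prod_{\sigma \in \text{Gal}(F/\mathbb Q)} \sigma(I)$ holds on the nose only after passing to the $f = 1$, unramified primes (where $\n(\mathfrak P) = (p)$ and the $\sigma(\mathfrak P)$ are exactly the distinct primes over $p$), so the cleanest route is to reduce every statement to such primes — legitimate since, as in Theorem \ref{main}'s proof, Chebotarev guarantees every ideal class is represented by (a product of) degree-$1$ unramified primes. Once that reduction is in place, the arguments above are straightforward diagram chases and the identification of $H$ with the "Galois-coboundary" subgroup of $\text{Cl}(R)$.
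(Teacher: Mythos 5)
First, a point of comparison: the paper does not actually prove Theorem \ref{big}; it explicitly defers to \cite{C96b}, so there is no in-paper argument to measure yours against. Judged on its own terms, much of your skeleton is sound. The equivalence (2)$\Leftrightarrow$(3) works, but you should invoke the finiteness of $\text{Cl}(R)$: an abstract isomorphism $\text{Cl}(R)\cong G$ only forces $H=1$ by counting orders in the exact sequence. Your identification of $H$ with the subgroup of $\text{Cl}(R)$ generated by the classes $[\sigma I][I]^{-1}$ is correct and is the real structural content of (2)$\Leftrightarrow$(4); moreover the prime-by-prime matching goes through directly because, in the Galois case, all primes over a fixed rational prime share the same residue degree, so the reduction to degree-one unramified primes via Chebotarev that you flag as the main obstacle is not actually needed for that step. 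Your argument for (2)$\Longrightarrow$(1) is also essentially right once $H$ is read as $\ker\pi$.

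The genuine gap is (1)$\Longrightarrow$(2). Your sketch uses saturation to place a generator of $\n(I)$ in $\mathfrak{N}$, so that $\n(I)=\n(J)$ with $J$ principal, and then ``chases back'' to conclude $I$ is principal. But the implication ``$\n(I)$ equals the norm of a principal ideal $\Longrightarrow I$ is principal'' is precisely the assertion $H=1$: it is the thing to be proved, not a tool. Worse, the hypothesis you extract from saturation is automatic: if $[I]\in\ker\pi$, the generator of $\n(I)$ equals $\n(\beta)/\n(\alpha)$ for some $\alpha,\beta\in R$, and the integral ideal $I(\alpha)\sim I$ already has norm equal to $\n((\beta))$ with no appeal to saturation. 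So saturation does no work in your argument for this direction, and the cycle of implications does not close. What is actually required is a proof of (1)$\Longrightarrow$(4): from a class moved by the Galois action one must manufacture $x,y\in\mathfrak{N}$ with $y/x\in\mathbb{Z}$ but $\pm y/x\notin\mathfrak{N}$. This is delicate. If one restricts to a single rational prime $p$ that splits completely, the set of exponents $k$ with $\pm p^k\in\mathfrak{N}$ can be closed under truncated subtraction even when the classes of the primes above $p$ are pairwise distinct (this occurs, for instance, with class group $\mathbb{Z}_3\oplus\mathbb{Z}_3$ in a cubic field, the very situation of the paper's later example), so no violation of saturation is visible among $p$-power norms alone. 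The violating pair must be assembled from primes lying over at least two different rational primes, in the spirit of the two-prime construction in the proof of Theorem \ref{main}; supplying that construction is the missing idea.
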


The details of the proof of this result are a bit excessive for present purposes, but a complete discussion is contained in \cite{C96b}. We do record a couple of interesting applications of this result, however. We remark that the next corollary is slightly corrected version of \cite[Theorem 3.6]{C96b} (in the original version, the base field should have been specified as $\mathbb{Q}$).

\begin{cor}
    Suppose that $[F:\mathbb{Q}]=n=p_1^{a_1}p_2^{a_2}\cdots p_k^{a_k}$ with each $p_i\in\mathbb{Z}$ prime and each $a_i>0$. If $\mathfrak{N}$ is saturated then the primes $p_i$ are the only primes dividing $\vert\text{Cl}(R)\vert$, and if we write
    \[
    \text{Cl}(R)\cong S_{p_1}\oplus S_{p_2}\oplus\cdots\oplus S_{p_k}
    \]
    \noindent as the direct sum of its Sylow subgroups, then each $S_{p_i}$ is of the form
    \[
    S_{p_i}\cong \mathbb{Z}_{p_i^{b_1}}\oplus\mathbb{Z}_{p_i^{b_2}}\oplus\cdots\mathbb{Z}_{p_i^{b_s}}
    \]

    \noindent with $0\leq b_j\leq a_i$ for all $1\leq j\leq t$.
\end{cor}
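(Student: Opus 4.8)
The plan is to exploit the exact sequence $1 \to H \to \text{Cl}(R) \xrightarrow{\pi} G \to 1$ from the preceding theorem together with the equivalence (from Theorem \ref{big}) that $\mathfrak{N}$ saturated forces $H = 1$, hence $\text{Cl}(R) \cong G$. So the entire statement reduces to understanding the structure of $G = Q(\mathfrak{N}^{\text{ext}})/Q(\mathfrak{N})$ when $\mathfrak{N}$ is saturated, and showing that its order is only divisible by the primes $p_i$ dividing $n = [F:\mathbb{Q}]$, with the finer bound on the exponents of the Sylow summands.

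First I would pin down the arithmetic that controls which primes can appear. Fix a rational prime $\ell$ and a prime ideal $\mathfrak{P} \subset R$ lying over $(\ell)$ with inertial degree $f$. Then $\n(\mathfrak{P}) = (\ell^f)$, so in the group $Q(\mathfrak{N}^{\text{ext}})$ the class of $\ell$ satisfies: $\ell^f$ lies in (the image of) $Q(\mathfrak{N})$ whenever $\mathfrak{P}^f$... more to the point, since $F/\mathbb{Q}$ is Galois, all primes over $\ell$ are conjugate with common $e, f$ and $efg = n$, so $(\ell) R = (\mathfrak{P}_1 \cdots \mathfrak{P}_g)^e$ and $\n$ of the principal ideal $(\ell)$ is $(\ell^{fg}) = (\ell^{n/e})$. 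Thus $\ell^{n/e}$ is always a norm of a principal ideal, i.e. in $\mathfrak{N}$ (one must be slightly careful: $(\ell)$ is principal in $R$, so $\ell^{fg}$ is a norm, and actually $e \cdot fg = n$ gives $\ell^{fg} = \ell^{n/e}$). Consequently the class of $\ell$ in $G = Q(\mathfrak{N}^{\text{ext}})/Q(\mathfrak{N})$ has order dividing $n/e$, hence dividing $n$. The key point: $G$ is generated by the classes of the rational primes $\ell$ (since every element of $\mathfrak{N}^{\text{ext}}$ is, up to sign and associates, a product of rational primes — it generates a principal ideal equal to a norm ideal, and norm ideals are supported over $\mathbb{Z}$), so $G$ is a torsion abelian group in which every generator has order dividing $n$; therefore $|G|$ divides a power of $n$, and its prime divisors are among $p_1, \dots, p_k$. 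Via $\text{Cl}(R) \cong G$ the same holds for $|\text{Cl}(R)|$, giving the first assertion.

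For the structural refinement, write $\text{Cl}(R) \cong G \cong \bigoplus S_{p_i}$ as the direct sum of Sylow subgroups. I would localize the previous analysis at a single prime $p := p_i$: the $p$-part $S_{p}$ of $G$ is generated by the $p$-parts of the classes $[\ell]$, and each such class has order dividing $p^{v_p(n)} = p^{a_i}$ by the computation above (the order of $[\ell]$ divides $n/e \mid n$, so its $p$-part has order dividing $p^{a_i}$). A finite abelian $p$-group generated by elements each of order dividing $p^{a_i}$ has exponent dividing $p^{a_i}$, hence in its invariant-factor (or elementary-divisor) decomposition $S_{p} \cong \mathbb{Z}_{p^{b_1}} \oplus \cdots \oplus \mathbb{Z}_{p^{b_s}}$ every $b_j \le a_i$. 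That is exactly the claimed bound $0 \le b_j \le a_i$.

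The main obstacle I anticipate is the bookkeeping in the first step: verifying carefully that $G$ really is generated by the classes of rational primes, and that each rational prime's class has order dividing $n$ (rather than something larger) — this hinges on the Galois hypothesis making all primes over $\ell$ conjugate, so that $e, f, g$ are well-defined and $efg = n$, and on the fact that $(\ell)$ itself is a principal ideal whose norm ideal is $(\ell^{fg})$. Once that "$[\ell]$ has order dividing $n/e$" fact is nailed down, both the prime-divisor statement and the Sylow-exponent bound follow formally from elementary abelian group theory, and the reduction $\text{Cl}(R) \cong G$ is handed to us by Theorem \ref{big} under the saturation hypothesis. I would also double-check the edge case where some $S_{p_i}$ is trivial (then there is nothing to prove for that $i$) and make sure the indexing "$1 \le j \le t$" in the statement is consistent with "$1 \le j \le s$"; I suspect $t$ should be $s$, and I would silently use $s$.
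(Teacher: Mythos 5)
Your overall strategy is sound and is essentially the mirror image of the paper's: the paper derives a contradiction on the $H$ side of the exact sequence (for any ideal class $[I]$, $\n(I^n)=\n(I)^n$ is the $n$-th power of an integer, hence the norm of a principal ideal, so $[I^n]\in H=1$ and therefore $\text{Cl}(R)^n=1$), whereas you work on the $G$ side via $\text{Cl}(R)\cong G$ and bound the orders of the generators of $G$. Both arguments rest on the same arithmetic fact --- that $n$-th powers of rational integers lie in $\mathfrak{N}$ --- and both deliver the single statement that $\text{Cl}(R)$ has exponent dividing $n$, from which the prime-divisor claim and the Sylow exponent bounds follow by elementary abelian group theory exactly as you say.

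There is, however, a concrete computational error in your key step. You assert that $\n\bigl((\ell)\bigr)=(\ell^{fg})=(\ell^{n/e})$, so that $\ell^{n/e}$ is always the norm of a principal ideal. This is false: since $(\ell)R=(\mathfrak{P}_1\cdots\mathfrak{P}_g)^e$ and each $\n(\mathfrak{P}_i)=(\ell^{f})$, the norm of the ideal $(\ell)$ is $(\ell^{efg})=(\ell^{n})$, consistent with $\N(\ell)=\ell^n$ for rational $\ell$. The ideal whose norm is $(\ell^{fg})$ is the radical $\mathfrak{P}_1\cdots\mathfrak{P}_g$, which need not be principal (take $\ell=2$ in $\mathbb{Z}[\sqrt{-5}]$: your formula would force $2$ to be a norm, but $2\neq a^2+5b^2$). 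Relatedly, ``the class of $\ell$ in $G$'' is not quite well formed: $Q(\mathfrak{N}^{\text{ext}})$ is generated by the classes of $\ell^{f_\ell}$, not of $\ell$ itself. Fortunately only the weaker conclusion is actually needed, and it survives the correction: $(\ell^{f})^{eg}=\ell^{n}=\N(\ell)\in\mathfrak{N}$, so the class of the generator $\ell^{f}$ has order dividing $eg$, hence dividing $n$. With that repair your argument goes through and agrees in substance with the paper's.
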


    \begin{proof} [Sketch]
    We derive contradictions to the class group being of the specified form. In the first case, we suppose that there is an ideal class, $[I]$ with order relatively prime to $n$. Note that $[I]^n=[I^n]$ contains an ideal with the norm corresponding to a principal norm (since in a degree $n$ extension, the normset contains the $n^{\text {th}}$ power of any integer). But as the order of $[I]$ is relatively prime to $n$, $I^n$ is not principal and so by Theorem \ref{big}, we obtain our desired contradiction.

    In the remaining case in which the prime divisors of $n$ coincide with the prime divisors of $\vert\text{Cl}(R)\vert$, but the exponent of a Sylow subgroup exceeds the bound, a similar contradiction can be derived.
        \end{proof}

Here is another corollary that lends itself to the case of quadratic rings of integers, and in this case gives a complete classification of quadratic rings of integers in which the set of norms is saturated.

\begin{cor}
Let $F$ be a quadratic field ($[F:\mathbb{Q}]=2$) with ring of integers $R$. The following are equivalent.
\begin{enumerate}
    \item $\mathfrak{N}$ is saturated.
    \item $\text{Cl}(R)$ is either trivial or $2-$elementary abelian.
\end{enumerate}

\end{cor}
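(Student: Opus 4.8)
The plan is to deduce this corollary directly from Theorem \ref{big}, specialized to the quadratic case. Since $[F:\mathbb{Q}]=2$, the Galois group $\text{Gal}(F/\mathbb{Q})$ is cyclic of order $2$, generated by the nontrivial automorphism $\tau$. By Theorem \ref{big}, $\mathfrak{N}$ is saturated if and only if $\tau$ acts trivially on $\text{Cl}(R)$. So the whole corollary reduces to the group-theoretic assertion that, for a group $A = \text{Cl}(R)$ carrying an action of $\tau$ with $\tau^2 = 1$, the action is trivial precisely when $A$ is trivial or $2$-elementary abelian. The catch, of course, is that this last statement is false for an arbitrary $\bbz/2$-action, so the real content is to identify which involution $\tau$ induces on the class group of a quadratic field.

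First I would recall the classical fact that for a quadratic extension $F/\mathbb{Q}$, complex (or the field) conjugation acts on $\text{Cl}(R)$ by $[I]\mapsto[\bar I]=[I]^{-1}$. The quickest justification: for any nonzero ideal $I$, the product $I\bar I$ equals the ideal generated by $N_K^F$ applied to $I$, i.e. $I\bar I = \n(I)R = (m)$ for some $m\in\bbz$, which is principal; hence $[\bar I]=[I]^{-1}$ in $\text{Cl}(R)$. (This is exactly the phenomenon already exploited in the proof of Theorem \ref{main} and in the exact sequence preceding Theorem \ref{big}.) So $\tau$ acts on $\text{Cl}(R)$ by inversion.

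With that identification in hand, the corollary is immediate: $\tau$ acts trivially on $\text{Cl}(R)$ $\iff$ $[I]=[I]^{-1}$ for every ideal class $\iff$ $[I]^2=1$ for every class $\iff$ $\text{Cl}(R)$ has exponent dividing $2$ $\iff$ $\text{Cl}(R)$ is trivial or (being a finite abelian group of exponent $2$) is $2$-elementary abelian. Combining this chain of equivalences with Theorem \ref{big} (parts (1) and (4)) gives ``(1) $\Leftrightarrow$ (2)'' as stated.

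The main obstacle — really the only nonformal step — is supplying the fact that conjugation induces inversion on $\text{Cl}(R)$; everything else is bookkeeping. I would present that step via the ideal-norm identity $I\bar I=(\n(I))$, citing the norm-of-an-ideal material from Section 2, so that the argument stays self-contained within the paper's framework rather than appealing to an external genus-theory reference. One small point worth a sentence: since $\text{Cl}(R)$ is finite, ``exponent $\le 2$'' forces the structure $(\bbz/2)^{\oplus r}$ by the classification of finite abelian groups, so no extra hypothesis is needed to pass from ``exponent dividing $2$'' to ``$2$-elementary abelian.''
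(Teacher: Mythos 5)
Your proposal is correct and follows essentially the same route as the paper: both reduce to Theorem \ref{big} and use the identity $I\sigma(I)=(\n(I))$ to show that conjugation acts on $\text{Cl}(R)$ by inversion, so that the action is trivial exactly when the class group has exponent dividing $2$. No substantive difference.
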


\begin{proof}
To apply Theorem \ref{big}, we claim that $\text{Cl}(R)$ is trivial or $2-$elementary abelian if and only if the Galois action on the class group is trivial. 

We first note that all quadratic fields are Galois with Galois group $\mathbb{Z}_2$. If $\sigma$ is the nontrivial conjugation automorphism, and $I$ is an ideal of $R$, then $I\sigma(I)=N(I)$ which is principal. In particular, $\sigma$ induces the automorphism of $\text{Cl}(R)$ that takes every ideal class to its inverse. Now we merely note that if the class group is $2-$elementary abelian (or trivial), then the Galois action on the class group fixes every ideal class since the class group is of exponent $2$.

If, on the other hand, $\text{Cl}(R)$ is nontrivial and not $2-$elementary abelian, then there is an ideal class of order greater than $2$. Hence $\sigma$ acts nontrivially on this class and so $\mathfrak{N}$ cannot be saturated.
\end{proof}

We close out this section with a couple of examples. The first is inspired by the previous corollary; it is natural to ask if it can be extended in a meaningful way to higher degree extensions, but in \cite[Example 3.12]{C96b}, it was shown that this cannot be extended (at least not in the naive way). More intricate, number-theoretic details of the claims concerning this example can be found in \cite{Gras73} and \cite{mGras75}.

\begin{ex}
    There is a cubic Galois extension of $\mathbb{Q}$ with class group isomorphic to $\mathbb{Z}_3\oplus\mathbb{Z}_3$ in which the subgroup of the class group consisting of ideal classes fixed by the Galois action is of order $3$. Hence, despite the fact that this extension is Galois with class group $3-$elementary abelian, the normset is not strictly saturated.
\end{ex}

Our final example, puts the notions of saturation to rest. As was mentioned, the notions of strictly saturated and strongly saturated have been shown to be equivalent (\cite{CT1997}), but the notion of saturation (up to unit equivalence) is a distinct one.

\begin{ex}
It is well-known that the ring $R:=\mathbb{Z}[\sqrt{34}]$ has class group isomorphic to $\mathbb{Z}_2$ and has fundamental unit with norm $1$. In $R$, the element $3$ has norm $9$ and the element $5+\sqrt{34}$ has norm $-9$. Since there is no element in $R$ of norm $-1$, $\mathfrak{N}$ cannot be strongly (equivalently strictly) saturated. But by Theorem \ref{big}, $\mathfrak{N}$ is saturated.
    \end{ex}

    \section{Elasticity}

    In the theory of factorization in integral domains, the notion of elasticity is a another important object of study. We recall that if $x\in R$ is an atomic element (that is, an element that can be factored into irreducibles), then we can define the elasticity of the element as follows.

    \begin{defn}
        Let $x\in R$ be an atomic element of an integral domain. We define the elasticity of the element $x$ as 
        \[
        \rho(x)=\text{sup}\{\frac{n}{m}\vert\text{ where $x$ has atomic factorizations of length $n$ and $m$}\}
        \]
    \end{defn}

If $R$ is an atomic domain (that is, a domain in which every nonzero nonunit has at least one factorization into irreducible elements), then we can define the elasticity of the domain globally.

\begin{defn}
Let $R$ be an atomic domain. We define the elasticity of $R$ by
\[
\rho(R)=\text{sup}\{\rho(x)\vert x\text{ is a nonzero nonunit of $R$}\}.
\]

\end{defn}

Of course the smallest elasticity attainable for an element (or atomic domain) is $1$, and domains with elasticity $1$ have been the subject of much study (\cite{carlitz}, \cite{Z80}, \cite{AAZ}, \cite{Co1}, \cite{Co3}, and for a survey \cite{CC}).

\begin{defn}
    We say that an atomic domain $R$ is a half-factorial domain (HFD) if given the equality of irreducible factorizations
    \[
    \alpha_1\alpha_2\cdots\alpha_n=\beta_1\beta_2\cdots\beta_m
    \]
    \noindent then $n=m$.
\end{defn}

Half-factorial domains are precisely the domains with global elasticity $1$. Of course any UFD is an HFD, and the class of HFDs generalizes the class of UFDs in a very natural way. In the setting of rings of algebraic integers there is a famous and elegant ideal-theoretic characterization of HFDs due to Carlitz. This was presented as a partial answer to a question by Narkiewicz who asked for an arithmetical characterization of class numbers larger than $1$. We present this celebrated result below.

\begin{thm}[Carlitz]\label{carl}
If $R$ is a ring of algebraic integers, then $R$ is an HFD if and only if $\vert\text{Cl}(R)\vert\leq 2$. 
\end{thm}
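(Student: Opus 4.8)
The plan is to argue via the principal-ideal factorizations of elements, using that $R$ is a Dedekind domain with unique factorization into prime ideals and cancellation of nonzero ideals, together with the standard fact (a consequence of the Chebotarev density theorem, already invoked in the proof of Theorem~\ref{main}) that every ideal class of $R$ contains a prime ideal.

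For the implication $|\text{Cl}(R)|\leq 2 \Rightarrow R$ is an HFD, the first step is a description of irreducibles. When $|\text{Cl}(R)|\leq 2$ there is at most one non-trivial class, necessarily of order $2$, so the product of any two non-principal primes is principal; using this and unique factorization of ideals one shows that $\alpha\in R\setminus U(R)$ is irreducible \emph{only if} $(\alpha)$ is either a principal prime ideal or a product $\mathfrak{P}\mathfrak{Q}$ of two (possibly equal) non-principal primes --- for if $(\alpha)=\mathfrak{P}_1\cdots\mathfrak{P}_k$ with $k\geq 3$, or with some $\mathfrak{P}_i$ principal and $k\geq 2$, then one peels off a proper principal divisor (a principal prime, or a product of two non-principal primes) and, by ideal cancellation, obtains a non-trivial factorization of $\alpha$. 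Granting this, given irreducible factorizations $\alpha_1\cdots\alpha_n=\beta_1\cdots\beta_m$, pass to the equality of principal ideals and compare prime factorizations: if $s$ of the $\alpha_i$ and $t$ of the $\beta_j$ are principal primes (the remaining ones contributing two non-principal prime factors each), then counting all prime factors with multiplicity gives $2n-s=2m-t$, and counting only the non-principal ones gives $2(n-s)=2(m-t)$; subtracting yields $n=m$. Hence $R$ is an HFD.

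For the implication $|\text{Cl}(R)|\geq 3 \Rightarrow R$ is not an HFD, split into two cases; in both I use the (class-number-free) observation that an element whose principal ideal is $\mathfrak{P}^n$ with every intermediate power non-principal, or is a product of non-principal primes no proper sub-product of which is principal, is irreducible. If $\text{Cl}(R)$ has an element $g$ of order $n\geq 3$, choose primes $\mathfrak{P}\in g$ and $\mathfrak{Q}\in g^{-1}$ (distinct, since $g\neq g^{-1}$); then $\mathfrak{P}^n=(\pi)$, $\mathfrak{Q}^n=(\sigma)$ and $\mathfrak{P}\mathfrak{Q}=(\rho)$ with $\pi,\sigma,\rho$ irreducible, and $(\pi\sigma)=(\mathfrak{P}\mathfrak{Q})^n=(\rho^n)$ gives an element with irreducible factorizations of lengths $2$ and $n\geq 3$. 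Otherwise every non-trivial class has order $2$, so $\text{Cl}(R)\cong\mathbb{Z}_2^k$ with $k\geq 2$; pick distinct non-trivial classes $g,h$, set $g'=gh$ (also non-trivial), and choose primes $\mathfrak{P}\in g$, $\mathfrak{Q}\in h$, $\mathfrak{R}\in g'$. Then $\mathfrak{P}^2,\mathfrak{Q}^2,\mathfrak{R}^2,\mathfrak{P}\mathfrak{Q}\mathfrak{R}$ are all principal, generated by irreducibles $\pi,\sigma,\tau,\omega$ respectively (here one checks that every proper sub-product of $\mathfrak{P}\mathfrak{Q}\mathfrak{R}$ has non-trivial class), and $(\pi\sigma\tau)=(\mathfrak{P}\mathfrak{Q}\mathfrak{R})^2=(\omega^2)$ exhibits factorizations of lengths $3$ and $2$. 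In either case $R$ fails to be an HFD, completing the proof.

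The only genuinely non-formal point --- and the step I expect to be the main obstacle to write cleanly --- is the structural characterization of irreducibles when $|\text{Cl}(R)|\leq 2$, especially verifying that an ideal $\mathfrak{P}\mathfrak{Q}$ with $\mathfrak{P},\mathfrak{Q}$ non-principal cannot be written as a product of two proper principal ideals while any longer product of primes can. Once that is settled, the length count and the class-group constructions are routine bookkeeping; one must only take care to cite the existence of a prime ideal in each ideal class at the points where it is used.
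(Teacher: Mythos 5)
Your argument is correct and complete. Note, however, that the paper itself offers no proof of Theorem \ref{carl}: it is stated as a celebrated classical result of Carlitz and simply cited, so there is nothing internal to compare your proof against. What you have written is essentially the standard Carlitz/Zaks argument, and each step checks out. For the direction $\vert\text{Cl}(R)\vert\leq 2\Rightarrow$ HFD, your structural claim that an irreducible $\alpha$ has $(\alpha)$ equal to a principal prime or to a product $\mathfrak{P}\mathfrak{Q}$ of two non-principal primes is right: if $k\geq 3$ primes occur, or if a principal prime occurs alongside another factor, one peels off a principal subproduct (any two non-principal primes both lie in the unique class of order $2$, so their product is principal) whose cofactor is a proper integral principal ideal, giving a nontrivial splitting of $\alpha$; and your double count $2n-s=2m-t$, $2(n-s)=2(m-t)$ does force $n=m$. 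For the converse, both the order-$\geq 3$ construction $(\pi\sigma)=(\rho^n)$ and the $\mathbb{Z}_2^k$, $k\geq 2$, construction $(\pi\sigma\tau)=(\omega^2)$ are valid, with irreducibility of the generators following from the non-principality of all proper subproducts, which you verify. The one external input you rely on --- that every ideal class contains a prime ideal --- is exactly the kind of density statement the paper already invokes (via Chebotarev) in its proof of Theorem \ref{main}, so citing it is consistent with the paper's conventions. In short: the proposal supplies a correct proof of a theorem the paper leaves unproved.
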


What is more, there is slightly more contained here that is explicitly stated. Indeed, for rings of integers of class number not exceeding $2$, the domains are partitioned neatly into two classes: class number $1$ consists precisely of the UFDs and class number $2$ consists precisely of the HFDs that do not have unique factorization.

In the more general setting, one may consider elasticity to be a numerical measure of ``how far" an atomic domain is from having the half-factorial property. Although $\rho(R)$ is notoriously difficult to compute in the general arena of atomic domains, in the setting of rings of algebraic integers, elasticity is completely understood. Before presenting the theorem that answers the elasticity question for rings of integers, we recall the notion of the Davenport constant.

\begin{defn}
    Let $G$ be a finite abelian group (written additively). The Davenport constant for this group ($D(G)$) is the smallest $n\in\mathbb{N}$ such that any collection of $n$ elements of $G$ (with possible repetition) will have a subset that sums to $0\in G$.
\end{defn}

Because of its myriad applications in factorization theory and combinatorics, the Davenport constant has been the subject of intense study by a large number of people; it is also famously hard to compute for finite abelian groups that are not $p-$groups and have more than $2$ invariant factors. See \cite{chap95} or \cite{GS92} for further information on this interesting, useful, and generally frustrating invariant.

We now produce a theorem that ``tells it all" for the elasticity question in rings of algebraic integers. The heavy lifting on this theorem was done by Valenza in \cite{V90} and was completely put to rest by Narkiewicz in \cite{Nark95}.

\begin{thm}\label{V}
Let $D(G)$ denote the Davenport constant of the (finite abelian) group $G$, and let $R$ be a ring of algebraic integers. The elasticity of $R$ is given by

\[
\rho(R)=\begin{cases}
    \frac{D(\text{Cl}(R))}{2},\text{ if $R$ is not a UFD,}\\
    1,\text{ if $R$ is a UFD.}
\end{cases}
\]

\end{thm}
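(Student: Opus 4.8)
The plan is to split into the UFD and non-UFD cases and, in the latter, to prove matching upper and lower bounds for $\rho(R)$ by working with prime-ideal factorizations of elements and the ideal class group $G:=\text{Cl}(R)$ — equivalently, by transferring the question to the monoid of zero-sum sequences over $G$. Since a ring of algebraic integers is Dedekind, every nonzero nonunit factors into irreducibles, so every elasticity below is well-defined and in fact attained. If $R$ is a UFD the factorization lengths of each element are unique, so $\rho(x)=1$ for all nonzero nonunits $x$ and $\rho(R)=1$; this settles the second case.

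So assume $R$ is not a UFD, hence $|G|\geq 2$ and $D(G)\geq 2$. The key observation is a dictionary between irreducibles and zero-sum sequences: if $u$ is irreducible with $(u)=\mathfrak{P}_1\cdots\mathfrak{P}_k$ (prime ideals with multiplicity), then $([\mathfrak{P}_1],\ldots,[\mathfrak{P}_k])$ is a \emph{minimal} zero-sum sequence in $G$ (its sum is $0$ since $(u)$ is principal, and any proper nonempty sub-sum vanishing would split $u$ into two nonunits), whence $k\leq D(G)$ by the defining property of the Davenport constant. Separately, a principal prime ideal $(\pi)$ contributes a prime element $\pi$ which, if $(\pi)^e$ is the exact power of $(\pi)$ dividing $(x)$, must occur (up to associates) exactly $e$ times in \emph{every} irreducible factorization of $x$; cancelling all such principal primes reduces us to elements $x$ whose only prime-ideal divisors are non-principal, and for those each irreducible factor satisfies $2\leq k\leq D(G)$. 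For the upper bound, write such a reduced $x$ with $(x)=\mathfrak{P}_1\cdots\mathfrak{P}_t$: an irreducible factorization of length $n$ partitions the $t$ non-principal primes into $n$ blocks of sizes in $\{2,\ldots,D(G)\}$, so $2n\leq t\leq n\,D(G)$. Thus two factorizations of lengths $n\geq m$ give $2n\leq t\leq m\,D(G)$, i.e.\ $n/m\leq D(G)/2$; reinstating the cancelled principal primes only lowers such ratios, so $\rho(R)\leq D(G)/2$.

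For the lower bound, fix a minimal zero-sum sequence $(g_1,\ldots,g_D)$ in $G$ of maximal length $D=D(G)$, obtained by appending $g_D=-(h_1+\cdots+h_{D-1})$ to a maximal zero-sum-free sequence $(h_1,\ldots,h_{D-1})$; note every $g_i\neq 0$ since $|G|\geq 2$. Because every ideal class of a ring of integers contains a prime ideal (classical; this is where we use that $R$ is a ring of integers, cf.\ the density results already invoked), choose prime ideals $\mathfrak{P}_i$ with $[\mathfrak{P}_i]=g_i$ and $\mathfrak{Q}_i$ with $[\mathfrak{Q}_i]=-g_i$. Setting $(a)=\mathfrak{P}_1\cdots\mathfrak{P}_D$, $(b)=\mathfrak{Q}_1\cdots\mathfrak{Q}_D$ and $(y_i)=\mathfrak{P}_i\mathfrak{Q}_i$, minimality of the relevant zero-sum sequences forces $a$, $b$ and each $y_i$ to be irreducible, while $(ab)=\prod_i\mathfrak{P}_i\mathfrak{Q}_i=(y_1\cdots y_D)$ shows that $ab$ has an irreducible factorization of length $2$ and one of length $D$. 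Hence $\rho(ab)\geq D(G)/2$, and combined with the upper bound $\rho(R)=D(G)/2$, as claimed. (When $|G|=2$ this reads $\rho(R)=1$, recovering the half-factorial statement of Carlitz.)

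The step I expect to be the main obstacle is the reduction to elements with no principal prime-ideal divisors: one must argue carefully that each principal prime appears in \emph{every} factorization with the same multiplicity, so that it contributes identically to the numerator and denominator of any length ratio and can be removed, and then that after removal each surviving irreducible factor involves at least two prime ideals. It is precisely this ``$\geq 2$'' (rather than ``$\geq 1$'') that yields the sharp constant $D(G)/2$ instead of $D(G)$. A smaller technical point is the invocation of the fact that every ideal class of a ring of integers contains a prime ideal, which is what makes the length-$D(G)$ extremal zero-sum sequence realizable by genuine elements of $R$.
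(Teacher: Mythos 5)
The paper does not prove this theorem; it states it as a known result, attributing the upper-bound work to Valenza \cite{V90} and the completion to Narkiewicz \cite{Nark95}, so there is no in-paper proof to compare against. Your argument is the standard (and correct) one underlying those references: translate irreducibles into minimal zero-sum sequences over $\text{Cl}(R)$, strip out principal primes (which occur with the same multiplicity in every factorization, so removing them can only increase length ratios), bound each remaining block between $2$ and $D(G)$ to get $\rho(R)\le D(G)/2$, and realize a maximal-length minimal zero-sum sequence by prime ideals (every class contains one, by Chebotarev) to exhibit an element with factorizations of lengths $2$ and $D(G)$. All the delicate points you flag are handled correctly, including the ``each reduced block has size $\ge 2$'' step that produces the factor $\tfrac12$. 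One microscopic quibble: the reason each $g_i\neq 0$ is not merely that $|G|\ge 2$, but that $0$ cannot occur in a zero-sum-free sequence and the appended element $g_D$ cannot be $0$ without the $h_i$ summing to zero; this is immediate but worth stating as such.
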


Comparing Theorem \ref{V} with Theorem \ref{main}, it is natural to ask if elasticity can be measured accurately by the monoid of norms. Indeed, in the Galois case, unique factorization in the monoid and in the domain are synonymous, and this inspires the question as to whether the HFDs (elasticity $1$) are correctly detected in the monoid of norms. Of course, it also inspires the question as to whether elasticity in general is mirrored in the monoid of norms in some discernible fashion. We will answer these questions in part presently.

We begin with a theorem from \cite{JNT}, which shows that factorizations tend to be ``tighter" in the normset than in the parent ring of integers.

\begin{thm}\label{eln}
    If $F$ be a Galois extension of $\mathbb{Q}$, then $\rho(R)\geq \rho(\mathfrak{N})$.
\end{thm}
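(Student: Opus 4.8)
The plan is to show that every atomic factorization of an element of $\mathfrak{N}$ lifts to an atomic factorization of a preimage in $R$ of the \emph{same} length, up to associates and conjugates, so that $\mathfrak{N}$ cannot exhibit any length-ratio that $R$ does not already exhibit. Concretely, fix $\n(x)\in\mathfrak{N}$ with $x\in R\setminus\{0\}$, and suppose $\n(x)$ has two atomic factorizations in $\mathfrak{N}$ of lengths $n$ and $m$, say $\n(x)=\mu_1\cdots\mu_n=\nu_1\cdots\nu_m$ with each $\mu_i,\nu_j$ irreducible in $\mathfrak{N}$. Since $R$ is a ring of algebraic integers it is a Dedekind domain and hence atomic, and by Theorem~\ref{V} its elasticity is finite, so it suffices to bound $\rho(\mathfrak{N})$ by $\rho(R)$ elementwise: I would show $\rho(\n(x))\leq \rho(x')$ for a suitable preimage $x'$, whence $\rho(\mathfrak{N})\leq\rho(R)$ after taking suprema.

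First I would reduce to the case where $x$ itself is chosen so that each irreducible factor of $\n(x)$ in $\mathfrak{N}$ is realized as the norm of an irreducible element of $R$ dividing $x$. The key input here is Lemma~\ref{prep}: when $F/\mathbb{Q}$ is Galois and — for the purpose of elementwise analysis — after localizing or passing to the relevant primes, an element $\pi\in R$ is irreducible exactly when $\n(\pi)$ is irreducible in $\mathfrak{N}$. Write a prime(-ish) factorization $x=\pi_1\cdots\pi_k$ in $R$; then $\n(x)=\n(\pi_1)\cdots\n(\pi_k)$ is an atomic factorization in $\mathfrak{N}$ of length $k$. The content of Theorem~\ref{eln} is that the \emph{shortest} and \emph{longest} atomic factorizations of $\n(x)$ in $\mathfrak{N}$ are each matched by atomic factorizations of (a conjugate-twisted associate of) $x$ in $R$ of the same length. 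Given the two factorizations $\mu_1\cdots\mu_n=\nu_1\cdots\nu_m$ above, I would use Galois symmetry: each $\mu_i$ pulls back to an irreducible $a_i\in R$ with $\n(a_i)=\mu_i$, and the product $\prod_{\sigma\in\mathrm{Gal}(F/\mathbb{Q})}\sigma\!\bigl(\prod_i a_i\bigr)$ equals $\prod_i \n(a_i)=\n(x)=\prod_\sigma\sigma(x)$; comparing prime ideal factorizations in the Dedekind domain $R$ forces $\prod_i a_i$ to be an associate of some conjugate of $x$. Thus $x$ has an atomic factorization in $R$ of length $n$, and likewise one of length $m$, so $\rho(x)\geq n/m$; taking the supremum over all atomic factorizations of $\n(x)$ gives $\rho(\n(x))\leq\rho(x)$.

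The main obstacle is the lifting step — ensuring that an \emph{arbitrary} atomic factorization of $\n(x)$ in $\mathfrak{N}$ (not just the one coming from a prime factorization of $x$) corresponds to a genuine factorization of an associate-of-a-conjugate of $x$ in $R$, including the bookkeeping of which conjugate and which unit. The subtlety is that an irreducible $\mu$ of $\mathfrak{N}$ dividing $\n(x)$ need not be $\n(\pi)$ for a \emph{divisor} $\pi$ of $x$ in $R$ — only for some $\pi\in R$; the Galois product trick is exactly what repairs this, but one must check that after dividing out the matched conjugate-associate the induction hypothesis still applies to an element of strictly smaller norm, and that irreducibility in $\mathfrak{N}$ is preserved under the relevant cancellations. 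Once that inductive descent is set up cleanly — mirroring the argument already used in the proof of Theorem~\ref{main} — the inequality $\rho(R)\geq\rho(\mathfrak{N})$ follows by passing to suprema over all nonzero nonunits, and one should note that the $\mathrm{UFD}/\mathrm{UFM}$ base case is handled automatically by Theorem~\ref{main} since then both elasticities equal $1$.
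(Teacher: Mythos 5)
Your proposal assembles the right ingredients (lift each irreducible of $\mathfrak{N}$ to an irreducible of $R$ via the porism of Theorem~\ref{ring}(3), then exploit the Galois product), but the pivotal step fails. You assert that since $\prod_{\sigma}\sigma\bigl(\prod_i a_i\bigr)=\prod_{\sigma}\sigma(x)$, comparing prime ideal factorizations forces $\prod_i a_i$ to be an associate of some conjugate of $x$. That is false: two elements of $R$ with equal norms need not be associates of conjugates of one another. In $\mathbb{Z}[\sqrt{-5}]$, for instance, $3$ and $2+\sqrt{-5}$ both have norm $9$, yet the associate-conjugates of $2+\sqrt{-5}$ are only $\pm(2\pm\sqrt{-5})$. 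Equality of the products over all of $\text{Gal}(F/\mathbb{Q})$ pins down the prime ideal factorization of the \emph{full} product, not of the individual factor $\prod_i a_i$; redistributing the prime ideals requires the $a_i$ to be prime, which is exactly what the UFD hypothesis of Lemma~\ref{prep} buys and what you are not entitled to here. Consequently the two lifts $\prod_i a_i$ and $\prod_j b_j$ coming from the two factorizations $\mu_1\cdots\mu_n=\nu_1\cdots\nu_m$ of $\n(x)$ need not be factorizations of a common element of $R$ (nor of any single preimage $x'$), so the elementwise inequality $\rho(\n(x))\le\rho(x')$ cannot be extracted by this route, and the proposed inductive descent never gets started.

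The repair is to stop trying to land on $x$ and instead multiply everything out. With $\mu_i=\n(a_i)$, $\nu_j=\n(b_j)$ and each $a_i,b_j$ irreducible in $R$ (by the porism, which needs no UFD hypothesis), the rational integer $\n(x)$, viewed as an element of $R$, equals $\prod_{i=1}^{n}\prod_{\sigma\in\text{Gal}(F/\mathbb{Q})}\sigma(a_i)$ and also $\prod_{j=1}^{m}\prod_{\sigma\in\text{Gal}(F/\mathbb{Q})}\sigma(b_j)$. Because $F/\mathbb{Q}$ is Galois, each $\sigma$ restricts to an automorphism of $R$, so every $\sigma(a_i)$ and $\sigma(b_j)$ is irreducible in $R$. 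Thus the single element $\n(x)\in R$ has irreducible factorizations of lengths $n[F:\mathbb{Q}]$ and $m[F:\mathbb{Q}]$, whence $\rho(R)\ge n/m$; taking the supremum over all pairs of factorizations of elements of $\mathfrak{N}$ gives $\rho(R)\ge\rho(\mathfrak{N})$, with no induction and no conjugate bookkeeping. (For reference, the paper itself states Theorem~\ref{eln} without proof, citing \cite{JNT}, so there is no in-text argument to compare against.)
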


Although more freedom of factorization in $R$ might seem intuitive, we note that the Galois assumption can be critical here. If we revisit Example \ref{5}, we see that the domain in question is a UFD (and hence has elasticity $1$), but in the set of norms, we have that $3^2$ and $3^3$ are irreducible, so the elasticity of the set of norms is at least $\frac{3}{2}$.

It is also useful to point out that the inequality in Theorem \ref{eln} can be strict. In \cite{JNT} it is shown that the ring of integers $\mathbb{Z}[\sqrt{-14}]$ has elasticity $2$ and yet its set of norms has the smaller elasticity $\frac{3}{2}$.

The next theorem is the essence of several results from \cite{JNT}.

\begin{thm}\label{elchar}
Suppose $F$ is Galois over $\mathbb{Q}$. The following conditions imply that $\rho(R)=\rho(\mathfrak{N})$.
\begin{enumerate}
\item If the norm of every irreducible element of $R$ is irreducible in $\mathfrak{N}$.
\item $\rho(R)<2$.
\item $\vert\text{Cl}(R)\vert<4$.
\item $\mathfrak{N}$ is saturated.
\end{enumerate}
\end{thm}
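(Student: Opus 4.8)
The plan is to prove each of the four conditions implies $\rho(R) = \rho(\mathfrak{N})$ by combining Theorem \ref{eln} (which already gives $\rho(R) \geq \rho(\mathfrak{N})$ unconditionally in the Galois case) with a reverse inequality $\rho(\mathfrak{N}) \geq \rho(R)$ established separately under each hypothesis. Since $\rho(R) \geq \rho(\mathfrak{N})$ is free, the entire task reduces to showing $\rho(\mathfrak{N}) \geq \rho(R)$ in each of the four cases. I would treat (1) first since the other three should reduce to it or to a direct computation.

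For condition (1): if every irreducible $\pi \in R$ has $\n(\pi)$ irreducible in $\mathfrak{N}$, then take an element $x \in R$ with two atomic factorizations $x = \pi_1 \cdots \pi_n = \xi_1 \cdots \xi_m$ witnessing a ratio close to $\rho(R)$. Applying $\n$ gives $\n(x) = \n(\pi_1)\cdots\n(\pi_n) = \n(\xi_1)\cdots\n(\xi_m)$, and by hypothesis both sides are factorizations of $\n(x)$ into irreducibles of $\mathfrak{N}$ of lengths $n$ and $m$ respectively, so $\rho(\n(x)) \geq n/m$ and hence $\rho(\mathfrak{N}) \geq \rho(R)$. (One must note $\n$ sends nonunits to nonunits by Theorem \ref{ring}(2), so no factor collapses.) For condition (4), I would invoke Lemma \ref{prep} (or Theorem \ref{big}) — saturation of $\mathfrak{N}$ together with $F/\mathbb{Q}$ Galois forces the norm of an irreducible to be irreducible in $\mathfrak{N}$, reducing (4) to (1); alternatively, saturation means the Galois action on $\mathrm{Cl}(R)$ is trivial, and one argues directly via Lemma \ref{prep} that irreducibles have irreducible norms. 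Conditions (2) and (3) I would handle by a short direct argument: $\rho(R) < 2$ (resp. $|\mathrm{Cl}(R)| < 4$, which by Theorem \ref{V} forces $\rho(R) \leq D(\mathrm{Cl}(R))/2 \leq 3/2 < 2$ when $R$ is not a UFD, and $=1$ when it is) — in this regime one shows that a potential bad factorization in $\mathfrak{N}$ of length ratio exceeding $\rho(R)$ would have to come from genuinely different factorizations in $R$, contradicting $\rho(R) < 2$; here one uses that if $\n(\pi)$ were reducible in $\mathfrak{N}$, say $\n(\pi) = \n(a)\n(b)$, then $\pi$ (being irreducible, hence prime once we are in a UFD — but $R$ need not be a UFD here, so this is delicate) forces a factorization relationship.

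The hard part will be conditions (2) and (3) when $R$ is \emph{not} a UFD, because then Lemma \ref{prep} does not apply and the norm of an irreducible element of $R$ may genuinely be reducible in $\mathfrak{N}$ (as Example \ref{5} shows the Galois hypothesis does not prevent this). The delicate point is to show that even so, when $\rho(R) < 2$ the normset cannot manufacture a length ratio worse than $\rho(R)$; this presumably requires a careful bookkeeping argument tracking how prime ideals of $R$ lying over a rational prime $p$ combine, using that the relevant ideal classes have small order (order dividing something bounded by $|\mathrm{Cl}(R)| < 4$), so that the "spread" in factorization lengths in $\mathfrak{N}$ is controlled by the same Davenport-type data that controls $\rho(R)$. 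I expect the cleanest route is to cite or reproduce the relevant lemma from \cite{JNT} bounding $\rho(\mathfrak{N})$ below by $\rho(R)$ under a smallness hypothesis on the class group, and then observe that cases (2), (3) fall under it while (1), (4) are the exact equality cases handled as above.
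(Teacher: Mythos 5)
Your overall skeleton is the right one: the paper itself supplies no proof for Theorem \ref{elchar} (it defers to \cite{JNT}), but the intended argument is exactly ``Theorem \ref{eln} gives $\rho(\mathfrak{N})\leq\rho(R)$ for free, so prove $\rho(\mathfrak{N})\geq\rho(R)$ under each hypothesis,'' and your treatment of condition (1) along these lines is correct and complete. For condition (4), the idea of reducing to (1) is sound, but not via Lemma \ref{prep}, which assumes $R$ is a UFD and is unavailable here. The workable route is Theorem \ref{big}: saturation forces the Galois action on $\text{Cl}(R)$ to be trivial, so (the extension being Galois) all primes of $R$ over a fixed rational prime lie in a single ideal class and have a single residue degree; an equation $\n(\pi)=\n(a)\n(b)$ with $a,b$ nonunits then lets one extract a proper nonempty principal subproduct of the prime ideal factorization of $(\pi)$, contradicting irreducibility of $\pi$. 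That detail is missing from your sketch but is recoverable.

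The genuine gap is in conditions (2) and (3), where you argue the wrong inequality. You propose to show that ``the normset cannot manufacture a length ratio worse than $\rho(R)$'' --- but that is precisely $\rho(\mathfrak{N})\leq\rho(R)$, which Theorem \ref{eln} already gives unconditionally, so the argument you describe proves nothing new. What must be shown is that $\mathfrak{N}$ \emph{does} achieve a ratio as large as $\rho(R)$. Concretely, (3) implies (2), and under (2) Theorem \ref{V} leaves only the cases $\text{Cl}(R)$ trivial or $\mathbb{Z}_2$ (where $\rho(R)=1$ and equality is automatic since every elasticity is at least $1$) and $\text{Cl}(R)\cong\mathbb{Z}_3$ or $\mathbb{Z}_2\oplus\mathbb{Z}_2$ (where $\rho(R)=\frac{3}{2}$). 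In the latter cases one must \emph{construct} an element of $\mathfrak{N}$ with irreducible factorizations of lengths $2$ and $3$, in the style of the second half of the proof of Theorem \ref{main}: for $\mathbb{Z}_3$, choose split, unramified, degree-one nonprincipal primes $\mathfrak{P},\mathfrak{Q}$ over $p,q$ with $[\mathfrak{Q}]=[\mathfrak{P}]^{-1}$; then $\pm p^{3}$, $\pm q^{3}$, $\pm pq$ lie in $\mathfrak{N}$ and are irreducible there (no associate of $p$ or of $q$ lies in $\mathfrak{N}$, since every prime over $p$ or $q$ is nonprincipal), and $(p^{3})(q^{3})=(pq)^{3}$, up to units of $\mathfrak{N}$, forces $\rho(\mathfrak{N})\geq\frac{3}{2}$; the $\mathbb{Z}_2\oplus\mathbb{Z}_2$ case is analogous with $(p^2)(q^2)(r^2)=(pqr)^2$. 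Your proposal never produces such a construction and instead ends by citing \cite{JNT} for the key lemma, which is circular here since the statement being proved is itself the result taken from \cite{JNT}.
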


The previous theorem can be leveraged to obtain the following corollary. 

\begin{cor}
If $R$ is the ring of integers of a Galois extension of $\mathbb{Q}$ , then $R$ is an HFD if and only if $\mathfrak{N}$ is an HFM.
\end{cor}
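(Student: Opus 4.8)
The plan is to reduce, via Carlitz's theorem, the property ``HFD'' to the size of $\text{Cl}(R)$, and then play this off against the elasticity comparison in Theorem \ref{elchar}, supplementing it with an explicit construction in the one case Theorem \ref{elchar} does not cover.

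Recall that an atomic domain (resp.\ monoid) is half-factorial exactly when its elasticity equals $1$. For the implication ``$R$ HFD $\Rightarrow \mathfrak{N}$ HFM,'' Theorem \ref{carl} gives $|\text{Cl}(R)|\leq 2<4$, so Theorem \ref{elchar}(3) yields $\rho(\mathfrak{N})=\rho(R)=1$, whence $\mathfrak{N}$ is an HFM. (When $|\text{Cl}(R)|=1$ one may instead invoke Theorem \ref{main}, which already gives that $\mathfrak{N}$ is a UFM.)

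For the converse I would proceed contrapositively. Assume $R$ is not an HFD; Theorem \ref{carl} gives $|\text{Cl}(R)|\geq 3$, and I must exhibit in $\mathfrak{N}$ two irreducible factorizations of a common element of different lengths. If $|\text{Cl}(R)|<4$, or $\rho(R)<2$, or $\mathfrak{N}$ is saturated, then condition (3), (2), or (4) of Theorem \ref{elchar} applies and $\rho(\mathfrak{N})=\rho(R)>1$, so $\mathfrak{N}$ is not an HFM. The one remaining situation is $|\text{Cl}(R)|\geq 4$ with $\rho(R)\geq 2$ and (by Theorem \ref{big}) with $\text{Gal}(F/\mathbb{Q})$ acting nontrivially on $\text{Cl}(R)$; here I would adapt the construction in the converse of Theorem \ref{main}. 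By the Chebotarev density theorem one may choose rational primes that split completely in $F$ whose prime divisors in $R$ realize any prescribed $\text{Gal}(F/\mathbb{Q})$-orbit of ideal classes. Fix a nonzero class $c$ and split into two subcases. If $-c$ is \emph{not} a Galois conjugate of $c$, pick distinct completely split primes $p,p'$ carrying prime ideals of classes $c$ and $-c$; then $pp'\in\mathfrak{N}$ is irreducible, the least positive exponent $d$ with $p^{d}\in\mathfrak{N}$ satisfies $d\geq 3$ and equals the corresponding exponent for $p'$, with $p^{d}$ and $p'^{d}$ irreducible in $\mathfrak{N}$, and the equality $(pp')^{d}=p^{d}p'^{d}$ displays two factorizations, of lengths $d$ and $2$. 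If instead $-c$ is always a Galois conjugate of $c$ (so $p^{2}\in\mathfrak{N}$ for every such $p$), use $|\text{Cl}(R)|\geq 3$ to choose nonzero classes $c_{1},c_{2},c_{3}$ summing to $0$ (for instance $c,c,-2c$ when $c$ has order at least $3$, or $c,c',c+c'$ for independent $c,c'$ when $\text{Cl}(R)$ is $2$-elementary) and distinct completely split primes $p_{1},p_{2},p_{3}$ carrying prime ideals of those classes; then $p_{1}p_{2}p_{3}\in\mathfrak{N}$, each $p_{i}^{2}\in\mathfrak{N}$ is irreducible, and $(p_{1}p_{2}p_{3})^{2}=p_{1}^{2}p_{2}^{2}p_{3}^{2}$ provides a factorization of (odd) length $3$ alongside one of even length, hence two of different length.

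The main obstacle is exactly this last case: Theorem \ref{elchar} genuinely fails to reach it (for example $R=\mathbb{Z}[\sqrt{-14}]$ has $\text{Cl}(R)\cong\mathbb{Z}_4$ and $\rho(R)=2$, with $\mathfrak{N}$ not saturated), so the Chebotarev-based construction seems unavoidable, and the delicate points are realizing prime ideals in prescribed classes above completely split rational primes and tracking which small powers of $p$ lie in $\mathfrak{N}$. A tidier route, were it available, would be a single direct argument showing that $|\text{Cl}(R)|\geq 3$ forces $\rho(\mathfrak{N})\geq \frac{3}{2}$, which would cover every case uniformly.
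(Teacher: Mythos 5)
Your proof is correct in substance, but for the converse it takes a genuinely different route from the paper. The paper's (sketched) argument is exactly the ``tidier route'' you wish for at the end: it asserts a single uniform, Carlitz-style bound, namely that $\vert\text{Cl}(R)\vert>2$ forces $\rho(\mathfrak{N})\geq\frac{3}{2}$, with no case analysis and no appeal to Theorem \ref{elchar} in the backward direction. Your version instead splits according to whether one of the hypotheses of Theorem \ref{elchar} holds and only builds an explicit factorization in the residual case ($\vert\text{Cl}(R)\vert\geq 4$, $\rho(R)\geq 2$, $\mathfrak{N}$ not saturated) --- and you are right that this residual case is nonempty, e.g.\ $\mathbb{Z}[\sqrt{-14}]$. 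What is worth noticing, though, is that your residual-case construction never actually uses any of the negated hypotheses: the dichotomy on whether $-c$ lies in the Galois orbit of $c$, together with the choice of three nonzero classes summing to zero, runs verbatim whenever $\vert\text{Cl}(R)\vert\geq 3$, and in both subcases it produces an element of $\mathfrak{N}$ with irreducible factorizations of lengths $2$ and $d\geq 3$ (resp.\ $2$ and $3$), hence $\rho(\mathfrak{N})\geq\frac{3}{2}$. In other words, your ``last resort'' is precisely the uniform Carlitz-style argument the paper alludes to, and the preliminary appeals to Theorem \ref{elchar} are dispensable; running the construction unconditionally both simplifies your write-up and recovers the paper's proof. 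Two small points to tighten if you formalize this: you should say a word about why $p_1p_2p_3$ is irreducible in $\mathfrak{N}$ (its only rational divisors are the $p_i$ and $p_ip_j$, and no $p_i$ lies in $\mathfrak{N}$ since no prime above $p_i$ is principal), and you are carrying the same sign/unit looseness as the paper does in the proof of Theorem \ref{main}, since norms are only determined up to sign by the corresponding ideal norms; neither is a real gap.
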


\begin{proof} [Sketch]
The forward direction is immediate from Theorem \ref{elchar}. For the converse, one can show, using techniques similar to those of \cite{carlitz}, that if the class number of $R$ exceeds $2$ the $\rho(\mathfrak{N})$ must be at least $\frac{3}{2}$.
\end{proof}

We conclude this section with a cautionary tale on the normset that underscores the hazards of the case in which the field extension is not Galois. Following \cite{P1977} we say that two fields $K$ and $L$ are {\it arithmetically equivalent} if they possess the same Dedekind zeta function ($\zeta_K(s)=\zeta_L(s)$). Arithmetically equivalent fields have much in common; they share the same degrees, discriminants, number of both real and complex embeddings, and prime decomposition laws over $\mathbb{Q}$. Arithmetically equivalent fields also possess isomorphic unit groups and determine the same normal closure over $\mathbb{Q}$. Surprisingly, however, two arithmetically equivalent fields need not be isomorphic. This was apparently shown first by F. Gassmann in 1925 or 1926 who gave two fields of degree 180 over $\mathbb{Q}$ that were arithmetically equivalent, but not isomorphic. The current authors were not able to find and verify the standard reference (see reference [2] from \cite{P1977}) as this reference appears with different specifics from different sources (none of which match the pagination from the back issues of Mathematische Zeitschrift); additionally, it appears that the reference refers to comments made by Gassmann on the paper \cite{H1926}. Nonetheless, the relevant information from Gassmann's work and more information on this interesting topic can be found in \cite{P1977} and \cite{P1978}.

Bootstrapping the results in \cite{P1978} one can deduce the following theorem. In a nutshell, it is possible for two arithmetically equivalent fields to have distinct normsets, despite having identical prime decomposition laws for all rational primes.

\begin{thm}
Let $K=\mathbb{Q}(\sqrt[8]{-15})$ and $L=\mathbb{Q}(\sqrt[8]{-240})$. $K$ and $F$ are arithmetically
equivalent fields with rings of integers $T$ and $R$ possessing different normsets.
\end{thm}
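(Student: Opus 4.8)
The plan is to take the arithmetic equivalence of $K=\mathbb{Q}(\sqrt[8]{-15})$ and $L=\mathbb{Q}(\sqrt[8]{-240})$ as input, use it to see that the two fields have the \emph{same} monoid of ideal norms, and then exhibit an integer which is the norm of an element of one ring of integers but not the other; the discrepancy is thereby forced to live in the ideal class groups, which arithmetically equivalent fields need not share.

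First I would record what is needed from \cite{P1978}. Since $-240=16\cdot(-15)$, the pair $(K,L)$ is an instance of Perlis's family $\mathbb{Q}(\sqrt[8]{a}),\,\mathbb{Q}(\sqrt[8]{16a})$ of arithmetically equivalent, non-isomorphic number fields. (The non-isomorphism is visible directly: the unique quadratic subfield of each is $\mathbb{Q}(\sqrt{-15})$, which contains neither $\sqrt{2}$ nor $\sqrt[8]{-15}$, so any isomorphism $K\to L$ would force $\sqrt[8]{-15}\in L$, hence $K=L$.) From arithmetic equivalence I retain only that $[K:\mathbb{Q}]=[L:\mathbb{Q}]=8$, that $K$ and $L$ are totally imaginary, and that for every rational prime $p$ the multiset of residue degrees of the primes of $T$ over $p$ equals that of the primes of $R$ over $p$. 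The first consequence I draw is that the monoids of ideal norms coincide, $\{N(I):I\text{ a nonzero integral ideal of }T\}=\{N(I):I\text{ a nonzero integral ideal of }R\}$ as submonoids of $(\mathbb{Z}_{>0},\cdot)$, each being generated by the prime powers $p^{f}$ with $f$ a residue degree over $p$; equivalently $\mathfrak{N}^{\text{ext}}$ is the same for $T$ and $R$. Hence any difference between the normsets $\mathfrak{N}_T$ and $\mathfrak{N}_R$ must be a difference between which integers are norms of \emph{principal} ideals.

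Next I would produce the witness. The polynomial $x^{8}+15$ is irreducible over $\mathbb{Q}$ (it is $x^{8}-(-15)$, and $-15$ is neither a square nor of the form $-4c^{4}$), so by part~(4) of Theorem~\ref{field} we have $N^{K}_{\mathbb{Q}}(\sqrt[8]{-15})=15$, whence $15\in\mathfrak{N}_T$. Comparing valuations in $(\sqrt[8]{-15})^{8}=(15)=(3)(5)$ shows that $3$ and $5$ are totally ramified of residue degree $1$ in $T$ and that $(\sqrt[8]{-15})=\mathfrak{p}_{3}\mathfrak{p}_{5}$, with $\mathfrak{p}_{3},\mathfrak{p}_{5}$ the unique primes of $T$ over $3$ and $5$; in particular the unique integral ideal of $T$ of norm $15$ is principal. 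Running the same computation on $(\sqrt[8]{-240})^{8}=(240)=(2)^{4}(3)(5)$ (or appealing to arithmetic equivalence with $K$) shows $3$ and $5$ are again totally ramified of residue degree $1$ in $R$, with unique primes $\mathfrak{q}_{3},\mathfrak{q}_{5}$ above them, so the unique integral ideal of $R$ of norm $15$ is $\mathfrak{q}_{3}\mathfrak{q}_{5}$; it also yields $(\sqrt[8]{-240})=\mathfrak{e}\,\mathfrak{q}_{3}\mathfrak{q}_{5}$ where $\mathfrak{e}=\prod_{\mathfrak{q}\mid 2}\mathfrak{q}^{e(\mathfrak{q}/2)/2}$ satisfies $\mathfrak{e}^{2}=(2)$. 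Since $K$ and $L$ are totally imaginary, $\mathfrak{N}_T$ and $\mathfrak{N}_R$ lie in $\mathbb{Z}_{>0}$, and $N^{L}_{\mathbb{Q}}(\gamma)=N((\gamma))$ for $0\neq\gamma\in R$; so $15\in\mathfrak{N}_R$ if and only if $\mathfrak{q}_{3}\mathfrak{q}_{5}$ is principal in $R$, equivalently --- as $(\sqrt[8]{-240})$ is principal --- if and only if the ``square root of $(2)$'' ideal $\mathfrak{e}$ is principal in $R$. (Symmetrically one could work with $240=N^{L}_{\mathbb{Q}}(\sqrt[8]{-240})\in\mathfrak{N}_R$ and ask whether $240\in\mathfrak{N}_T$; I would use whichever direction the class-group data in \cite{P1978} renders cleanest.)

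The main obstacle is the remaining class-group assertion: that $\mathfrak{e}$ is \emph{not} principal in $R$, whereas the analogous product of ramified primes in $T$ \emph{is}. This cannot follow from arithmetic equivalence alone --- it is precisely where $K$ and $L$ part ways --- so here I would invoke the explicit class-group and class-number computations for $\mathbb{Q}(\sqrt[8]{a})$ and $\mathbb{Q}(\sqrt[8]{16a})$ carried out in \cite{P1978}, or else compute $\text{Cl}(R)$ directly via the Minkowski bound of Theorem~\ref{mink} and verify that the class of $\mathfrak{q}_{3}\mathfrak{q}_{5}$ is nontrivial. Everything else is routine: the irreducibility of the two defining polynomials, the fact that no integral ideal other than $\mathfrak{q}_{3}\mathfrak{q}_{5}$ has norm $15$ once the splitting of $3$ and $5$ is pinned down, and the bookkeeping identifying $15\in\mathfrak{N}_T\setminus\mathfrak{N}_R$. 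So the single genuine difficulty is supplying the class-group input that separates $T$ from $R$.
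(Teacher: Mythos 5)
The paper does not actually prove this theorem: it is stated as something one "can deduce" by "bootstrapping the results in \cite{P1978}," with no argument supplied. Your proposal is therefore more detailed than the paper's treatment, and the strategy is sound: arithmetic equivalence forces the two fields to have the same monoid of ideal norms, the Eisenstein criterion at $3$ and $5$ pins down $\mathfrak{p}_3\mathfrak{p}_5$ (resp.\ $\mathfrak{q}_3\mathfrak{q}_5$) as the unique integral ideal of norm $15$ in each ring, $(\sqrt[8]{-15})=\mathfrak{p}_3\mathfrak{p}_5$ shows $15\in\mathfrak{N}_T$, and the factorization $(\sqrt[8]{-240})=\mathfrak{e}\,\mathfrak{q}_3\mathfrak{q}_5$ with $\mathfrak{e}^2=(2)$ correctly reduces the question $15\in\mathfrak{N}_R$? to the principality of a single ideal class. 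The one step you leave open --- the non-principality of $\mathfrak{e}$ in $R$ --- is genuinely the crux, and note that knowing merely that the class numbers of $T$ and $R$ differ (the headline result of \cite{P1978}) is not by itself enough: one needs the finer class-group/principality data for the specific ramified primes, so you are right to insist on importing the explicit computations rather than just the class-number inequality. Since the paper itself outsources exactly this to \cite{P1978}, your proposal matches the intended route and is complete modulo the same external input the authors rely on.
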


The natural follow-up question that one may ask is: “Do all nonisomorphic
fields have different normsets?” The answer to this is no. 
A counterexample can be realized by considering the nonisomorphic but arithmetically equivalent fields $\mathbb{Q}(\sqrt[8]{-3})$ and  $\mathbb{Q}(\sqrt[8]{-48})$. It was shown in \cite{PS1979} that the rings of integers of these fields are both UFDs. As the
normsets of both rings are generated by the norms of all the primes, and
since the prime decomposition laws are the same in arithmetically equivalent
fields, these two fields must have identical normsets.

These examples show that although the set of norms is better at distinguishing non-isomorphic fields than is the prime decomposition laws, it is still fallible. At this writing the question as to which fields can be distinguished by complete knowledge of the normset is still open.

\section {An application}

In this section, we produce an application to at least partially demonstrate the utility of the norm. This result can be found in \cite {Co3}. Here we supply a complete proof of the fact that $\mathbb{Z}[\sqrt{-3}]$ is, in fact, an HFD and then show that $\mathbb{Z}[\sqrt{-3}]$ is the unique imaginary HFD order that is not the full ring of integers. 

\begin{prop}
The domain $\mathbb{Z}[\sqrt{-3}]$ is the unique imaginary quadratic HFD that is not integrally closed.
\end{prop}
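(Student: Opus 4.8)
The plan is to establish the two halves separately. That $\mathbb{Z}[\sqrt{-3}]$ is not integrally closed is immediate: $\omega=\frac{1+\sqrt{-3}}{2}$ is integral over $\mathbb{Z}$ but does not lie in $\mathbb{Z}[\sqrt{-3}]$. So the content is (i) $\mathbb{Z}[\sqrt{-3}]$ is an HFD, and (ii) no other non-maximal order in an imaginary quadratic field is.

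For (i), set $S=\mathbb{Z}[\omega]$ (the Eisenstein integers, a PID) and $R=\mathbb{Z}[\sqrt{-3}]$. One checks that $R=\mathbb{Z}+2S$, that $R$ has conductor $2S$ and index $2$ in $S$, that $R^{\times}=\{\pm1\}=S^{\times}\cap R$ whereas $S^{\times}=\langle\omega\rangle$ has order $6$, and crucially that the reduction $S^{\times}\to(S/2S)^{\times}=\mathbb{F}_4^{\times}$ is surjective. The key lemma is that every atom of $R$ is irreducible as an element of $S$. Indeed, suppose an atom $\pi\in R$ factored in $S$ as $\tau_1\cdots\tau_r$ with each $\tau_i$ irreducible and $r\ge2$. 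If some $\tau_i$ is coprime to $2$, pick $u\in S^{\times}$ with $u\tau_i\equiv1\pmod{2S}$; then in $R=\mathbb{Z}+2S$ we have $u\tau_i\in R$ (it reduces into $\mathbb{F}_2\subseteq S/2S$) and $u^{-1}\prod_{j\ne i}\tau_j=\pi/(u\tau_i)\in R$ (it reduces to the image of $\pi$, which lies in $\mathbb{F}_2$), and neither factor is a unit of $R$, contradicting that $\pi$ is an atom. Otherwise every $\tau_i$ is an $S$-associate of $2$, so $\pi=2\cdot(2^{r-1}u)$ for a unit $u\in S^{\times}$ is a nontrivial factorization in $R$, again a contradiction. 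Granting the lemma, any factorization of a nonzero nonunit $\alpha\in R$ into atoms of $R$ becomes, on passing to $S$, the prime factorization of $\alpha$ in the PID $S$; hence every such factorization has the same length, namely the number of prime factors of $\alpha$ in $S$, and $R$ is an HFD.

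For (ii), let $\mathcal{O}=\mathbb{Z}+f\mathcal{O}_K$ be an order in an imaginary quadratic field $K$ with $f\ge2$, and assume $\mathcal{O}$ is an HFD; the goal is $K=\mathbb{Q}(\sqrt{-3})$ and $f=2$. Two preliminary facts: $\mathcal{O}$ is stable under conjugation, and a norm computation shows every rational prime $p\mid f$ is an atom of $\mathcal{O}$ (an element of $\mathbb{Z}+\mathbb{Z}f\omega_K$ of norm $p$ must have vanishing $\omega_K$-coefficient since $f\ge p$, forcing $a^2=p$). The workhorse is a ``forbidden equality'': given an ideal $\mathfrak{a}$ of $\mathcal{O}_K$, coprime to $f$ or supported over a prime dividing $f$, with $\mathfrak{a}^{e}=(\gamma)$ principal, $\gamma\notin\mathcal{O}$, $f\gamma\in\mathcal{O}$, and $f\gamma$ an atom of $\mathcal{O}$, the element $(f\gamma)(f\overline{\gamma})=f^{2}N(\gamma)$ has a factorization of length $2$ and, since $f^{2}N(\gamma)$ is a rational integer whose prime divisors arising here are atoms of $\mathcal{O}$, also a factorization of length at least $2\cdot(\text{number of prime factors of }f)+e\ge3$ --- contradicting half-factoriality. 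One applies this: (a) to a non-principal degree-one prime of $\mathcal{O}_K$ coprime to $f$, forcing $\mathcal{O}_K$ to be a PID; (b) to a prime of $\mathcal{O}_K$ above a ramified or split $p\mid f$ (now principal), forcing every $p\mid f$ to be inert in $\mathcal{O}_K$; (c) to the principal primes of $\mathcal{O}_K$ coprime to $f$, forcing each to have a generator inside $\mathcal{O}$, which (since the residues of such generators fill $(\mathcal{O}_K/f)^{\times}$ modulo $\mathcal{O}_K^{\times}$, by Chebotarev) is equivalent to $(\mathcal{O}_K/f)^{\times}=(\mathbb{Z}/f)^{\times}\cdot\mathrm{im}(\mathcal{O}_K^{\times})$. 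Comparing orders modulo an inert prime $p\mid f$, where $[(\mathcal{O}_K/p)^{\times}:(\mathbb{Z}/p)^{\times}]=p+1$, forces $|\mathcal{O}_K^{\times}|\ge3$, so $K=\mathbb{Q}(i)$ or $\mathbb{Q}(\sqrt{-3})$; a short computation in the relevant unit groups then eliminates $\mathbb{Q}(i)$ entirely (its inert primes are $\equiv3\pmod4$, and the image of $\mu_4$ is too small) and forces $p=2$, then $f=2$, for $\mathbb{Q}(\sqrt{-3})$. Hence $\mathcal{O}=\mathbb{Z}+2\mathbb{Z}[\omega]=\mathbb{Z}[\sqrt{-3}]$.

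The norm bookkeeping and the concluding unit-group arithmetic are routine. The main obstacle is making the ``forbidden equalities'' rigorous --- above all, showing that elements of the form $f\gamma$ are genuinely atoms of $\mathcal{O}$. This amounts to determining exactly which divisors of the ideal $(f\gamma)\subseteq\mathcal{O}_K$ are principal with a generator in $\mathcal{O}$, and it is here that one must choose the auxiliary prime small relative to $f$ (so that the only candidate divisors are trivial), which needs an effective bound on the least split non-principal prime; the competing factorization's length must then be checked to be genuinely larger. Both points are handled by the same atom-counting idea as in (i), applied to the extension $\mathcal{O}\subseteq\mathcal{O}_K$ in place of $R\subseteq S$, together with the elementary bound $2\cdot(\text{number of prime factors of }f)+e\ge3$ valid whenever $f\ge2$.
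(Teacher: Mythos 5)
Your part (i) is correct and is essentially the paper's argument: both proofs show that every atom of $R=\mathbb{Z}[\sqrt{-3}]$ stays irreducible in the Eisenstein integers $S=\mathbb{Z}[\omega]$ and then read off half-factoriality from unique factorization in $S$. Your mechanism (reduction modulo $2S$ and surjectivity of $S^{\times}\to(S/2S)^{\times}\cong\mathbb{F}_4^{\times}$) is a clean packaging of what the paper does by direct case analysis on which power of $\omega$ moves a given element of $S$ into $R$; either way the point is that unit adjustment lets you push any $S$-factorization of an element of $R$ down into $R$.

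Part (ii), however, has a genuine gap, and it is the one you flag yourself: the ``forbidden equality'' argument never actually establishes that the elements $f\gamma$ are atoms of $\mathcal{O}$, nor that the competing factorization of $f^{2}N(\gamma)$ really has length at least $3$ (a rational integer in $\mathcal{O}$ need not factor through rational-integer atoms, so counting prime divisors of $f$ does not automatically lower-bound the number of atoms). Deferring this to ``an effective bound on the least split non-principal prime'' imports serious analytic machinery and still leaves the atom verification undone, so as written the uniqueness half does not close. The paper avoids all of this with a single explicit witness: in the order $\mathbb{Z}[n\sqrt{d}]$ (the case $d\equiv 2,3\pmod 4$; the $d\equiv 1$ case is parallel with $n\omega_K$), the element $n\sqrt{d}$ is irreducible because the norm form $x^{2}-dn^{2}y^{2}$ exceeds $-dn^{2}$ whenever $y\neq 0$, and then
\[
(n\sqrt{d})(n\sqrt{d})=(d)(n)(n)
\]
pits a length-$2$ factorization against one of length at least $3$ unless $d=-1$ and $n$ is prime. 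That collapses the whole problem to prime-index orders in $\mathbb{Z}[i]$ (resp.\ $\mathbb{Z}[\omega]$), which are killed by the analogous element $p+pi$ of norm $2p^{2}$ (resp.\ $p+p\omega$ of norm $3p^{2}$), whose proper divisors would need norm $2$ or $p$, and no such elements exist in $\mathbb{Z}[pi]$. If you want to salvage your outline, replace the Chebotarev/unit-index apparatus with this explicit element; it supplies exactly the verified atom and the verified length discrepancy that your sketch is missing.
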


\begin{proof}

We first point out that $R:=\mathbb{Z}[\sqrt{-3}]$ is, in fact, an HFD. To see this we note that it is well known that $T:=\mathbb{Z}[\omega]$, where $\omega=\frac{-1+\sqrt{-3}}{2}$ is the integral closure of $R$ and is a UFD (indeed, the Minkowski bound is strictly less than $2$ and so a direct application of Theorem \ref{mink} gives that $T$ is a UFD). 

We next observe that if $x:=a+b\omega\in T$ a simple computation shows that either $x\in R$ (in the case that $b$ is even), $\omega^2 x\in R$ (in the case that $a$ is even), or $\omega x\in R$ (in the case that $a$ and $b$ are both odd). The upshot is that every element of $R$ is a unit multiple of an element in $T$. Similarly, it can be shown that if $x\in R$ and $r=ab$ for $a,b\in T$ then there exist $\epsilon_1,\epsilon_2\in\mathbb{N}_0$ such that $a\omega^{\epsilon_1},b\omega^{\epsilon_2}\in R$ and $\omega^{\epsilon_1+\epsilon_2}\in R$. Hence any irreducible in $R$ remains irreducible in $T$. As $T$ is a UFD, $R$ must be an HFD.

For the uniqueness part, we briefly recall that the imaginary quadratic orders take on the form (\cite{hcohn}):

\[
R_n=
\begin{cases}
\mathbb{Z}[n\sqrt{d}] & \text{ if $d\equiv 2,3\text{mod}(4)$}\\
\mathbb{Z}[n(\frac{1+\sqrt{d}}{2})] & \text{ if $d\equiv 1\text{mod}(4)$}
\end{cases}
\]

\noindent for $d<0$ and $n\in\mathbb{N}$.

Here we will only do the case $d\equiv 2,3\text{mod}(4)$, as the case $d\equiv 1\text{mod}(4)$, although slightly more computationally tedious, is essentially the same sequence of ideas. In this case we write $R=\mathbb{Z}[n\sqrt{d}]$.

We consider the element $n\sqrt{d}\in R$ and claim that this element is irreducible. To see this we consider the norm 

\[
N(n\sqrt{d})=-dn^2
\]

\noindent and recall that the norm of a general element is given by 

\[
N(x+yn\sqrt{d})=x^2-dn^2y^2.
\]

It is easy to see that $x^2-dn^2y^2\geq -dn^2$ if $y\neq 0$. We conclude that if $x+yn\sqrt{d}$ is a proper divisor of $n\sqrt{d}$ then $x+yn\sqrt{d}=x\in\mathbb{Z}$, but it is clear that this is not possible. We conclude that $n\sqrt{d}$ is irreducible in $R$, and consider the factorizations

\[
(n\sqrt{d})(n\sqrt{d})=(d)(n)(n).
\]

\noindent Since the left has two irreducible factors and $n>1$, we see that for $R$ to be an HFD it must be the case that $n$ is prime and $d=-1$; we have reduced to the case of orders of prime index in $\mathbb{Z}[i]$.

So suppose that we have an HFD of index $p$ (prime) in the Gaussian integers. We now consider the element $p+pi\in \mathbb{Z}[pi]$. Note the norm is given by 

\[
N(p+pi)=2p^2
\]

\noindent and so any proper divisor of $p+pi$ must have norm $2,p,2p,$ or $p^2$. Additionally, if one divisor has norm $k$ then the other divisor must have norm $\frac{2p^2}{k}$ and hence we can assume that our divisor has norm either $2$ or $p$. It is easy to see that there is no element of $\mathbb{Z}[pi]$ of norm $2$ or $p$. This completes the proof of this case; the other is very similar.
\end{proof}

Given the result above, we could not resist the following consequence. The above result completes the entire list of all imaginary quadratic HFD orders. The Stark-Heegner theorem (this amazing theorem has an unusual history spread out over papers by Heegner (\cite{H1952}), then Baker (\cite{B1966}) and Stark (\cite{Stark1967} and \cite{Stark1969})) gives the complete list of imaginary quadratic UFDs. Stark later gave a complete list of quadratic rings of integers with class number $2$ (\cite{Stark1975}). The previous gives us the following complete classification.

\begin{thm}
Let $d<0$ be a square free integer and $\mathbb{Q}(\sqrt{d})$ a quadratic number field with ring of integers $\mathbb{Z}[\xi]$ where 

\[
\xi=\begin{cases}
    \sqrt{d},\text{ if $d\equiv 2,3\text{mod}(4),$}\\
    \frac{1+\sqrt{d}}{2},\text{ if $d\equiv 1\text{mod}(4).$}
\end{cases}
\]

The complete list of HFD orders $\mathbb{Z}[n\xi]$ in imaginary quadratic fields is given by
\begin{enumerate}
\item $n=1$ and $d=-1,-2,-3,-7,-11,-19,-43,-67,-163$ are the HFDs that are UFDs,
\item $n=1$ and $d=-5,-6,-10,-13,-15,-22,-35,-37,-51,-58,-91,-115,\\-123,-187,-235,-267,-403,-427$ are the integrally closed HFDs that are not UFDs,
\item $n=2$ and $d=-3$ is the unique HFD that is not integrally closed.
\end{enumerate}
\end{thm}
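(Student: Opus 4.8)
The plan is to assemble the classification from four inputs already at our disposal: Carlitz's characterization of algebraic-integer HFDs (Theorem~\ref{carl}), the Stark--Heegner list of imaginary quadratic UFDs, Stark's list of imaginary quadratic fields of class number $2$ (\cite{Stark1975}), and the preceding Proposition. First recall, exactly as in the proof of that Proposition (following \cite{hcohn}), that every order of $\mathbb{Q}(\sqrt{d})$ has the form $\mathbb{Z}[n\xi]$ with $\xi$ as in the statement; here $n=1$ gives the maximal order $R_1$, which is integrally closed, while any $n\geq 2$ gives a proper suborder of index $n$, which is not integrally closed. So the list is built by treating the integrally closed case ($n=1$) and the non-integrally-closed case ($n\geq 2$) separately.

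For $n=1$ I would invoke Carlitz's theorem: $R_1$ is an HFD if and only if $\vert\text{Cl}(R_1)\vert\leq 2$, and, as recorded just after Theorem~\ref{carl}, class number $1$ singles out precisely the UFDs while class number $2$ singles out precisely the HFDs that are not UFDs. The class-number-one subcase is then the Stark--Heegner theorem, which yields the nine values $d=-1,-2,-3,-7,-11,-19,-43,-67,-163$ of item~(1); the class-number-two subcase is Stark's determination in \cite{Stark1975}, which yields the eighteen values of item~(2). Together these exhaust the integrally closed HFD orders.

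For $n\geq 2$ the preceding Proposition does all the work, since it asserts that $\mathbb{Z}[\sqrt{-3}]$ is the unique imaginary quadratic HFD that is not integrally closed. It then only remains to locate this ring within the parametrization $\mathbb{Z}[n\xi]$: since $-3\equiv 1\pmod{4}$ we have $\xi=\tfrac{1+\sqrt{-3}}{2}$, whence $\mathbb{Z}[2\xi]=\mathbb{Z}[1+\sqrt{-3}]=\mathbb{Z}[\sqrt{-3}]$, so the unique non-maximal HFD order is the case $n=2$, $d=-3$ of item~(3). Assembling the three cases yields the stated list.

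The hard part lies not in the argument but in the ingredients: the two Stark-type finiteness theorems are genuinely deep, whereas the combinatorial glue --- Carlitz's criterion together with the preceding Proposition --- is elementary. The one small point I would still want to verify explicitly is that $\mathbb{Z}[n\xi]$ with $n\geq 2$ is never integrally closed (so that the Proposition really does account for every non-maximal order); but this is immediate, since such an order has index $n>1$ in $R_1$.
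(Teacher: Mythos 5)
Your proposal is correct and follows exactly the route the paper takes: the paper offers no formal proof but assembles the theorem from the Stark--Heegner list, Stark's class-number-two determination via Carlitz's criterion (Theorem~\ref{carl}), and the preceding Proposition for the non-integrally-closed case. Your additional check that $\mathbb{Z}[2\xi]=\mathbb{Z}[\sqrt{-3}]$ when $d=-3$ is a worthwhile small verification the paper leaves implicit.
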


For those interested in this subject it should be pointed out that in the real quadratic case, non-integrally closed orders that are HFDs exist in abundance. The authors recommend the very interesting paper \cite{Pollack} on the subject of the infinitude of real quadratic HFD orders. It is shown here that infinitely many exist (even an infinite family inside a single ring of integers).

We close by pointing out a couple of other applications of norms. The first is a theorem from \cite{CO2} and has been used in number of applications in the study of HFDs; the proof given therein relies on the norm. This result also follows from a more recent characterization of HFD orders that can be found in \cite{R25}.

\begin{thm}
    Let $R$ be an HFD order in a ring of algebraic integers, then the integral closure of $R$ is also an HFD.
\end{thm}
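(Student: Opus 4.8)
The plan is to deduce this from Carlitz's theorem (Theorem~\ref{carl}) together with a transfer of the classical ``Carlitz construction'' of non--half-factorial factorizations from the maximal order down into $R$. Write $\overline{R}$ for the integral closure of $R$; it is the ring of integers of a number field $K$ (the field of fractions of $R$), so by Theorem~\ref{carl} it is an HFD exactly when $|\Cl(\overline{R})|\leq 2$. Extension of ideals $\mathfrak{a}\mapsto\mathfrak{a}\overline{R}$ gives a surjective homomorphism $\Pic(R)\twoheadrightarrow\Pic(\overline{R})=\Cl(\overline{R})$ --- it is onto because every class of $\Cl(\overline{R})$ has a representative coprime to the conductor $\mathfrak{f}$ of $R$ in $\overline{R}$, and such an ideal is the extension of its contraction to $R$. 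Hence $|\Cl(\overline{R})|$ divides $|\Pic(R)|$, and it is enough to prove that an HFD order satisfies $|\Pic(R)|\leq 2$. I would establish the contrapositive: \emph{if $|\Pic(R)|\geq 3$, then $R$ is not an HFD.}

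To do this I would first recall the ideal theory of $R$ relative to its conductor $\mathfrak{f}$: the invertible ideals of $R$ coprime to $\mathfrak{f}$ factor uniquely into invertible prime ideals coprime to $\mathfrak{f}$ (via the inclusion-preserving bijection $\mathfrak{a}\leftrightarrow\mathfrak{a}\overline{R}$ between these $R$-ideals and the ideals of $\overline{R}$ coprime to $\mathfrak{f}$), and every class of $\Pic(R)$ is represented by infinitely many invertible primes coprime to $\mathfrak{f}$. From unique factorization I would extract the irreducibility criterion I need: if $\mathfrak{p}$ is an invertible prime coprime to $\mathfrak{f}$ whose class has order $n$ in $\Pic(R)$, then $\mathfrak{p}^{n}=(\pi)$ is principal and $\pi$ is irreducible in $R$ (a factorization $\pi=ab$ forces $(a)=\mathfrak{p}^{j}$, $(b)=\mathfrak{p}^{n-j}$ with $0<j<n$, contradicting the order of $[\mathfrak{p}]$); and likewise, if $\mathfrak{p}_{1},\dots,\mathfrak{p}_{r}$ are distinct invertible primes coprime to $\mathfrak{f}$ whose product is principal while no proper nonempty subproduct is, then any generator of $\mathfrak{p}_{1}\cdots\mathfrak{p}_{r}$ is irreducible in $R$.

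With this setup I would run the Carlitz construction inside $G:=\Pic(R)$, $|G|\geq 3$, in two cases. If $G$ contains an element $g$ of order $n\geq 3$, pick invertible primes $\mathfrak{p},\mathfrak{q}$ coprime to $\mathfrak{f}$ in the classes $g$ and $-g$; then $\mathfrak{p}^{n}=(\pi)$, $\mathfrak{q}^{n}=(\rho)$ and $\mathfrak{p}\mathfrak{q}=(\mu)$ with $\pi,\rho,\mu$ irreducible, and from $(\pi\rho)=(\mathfrak{p}\mathfrak{q})^{n}=(\mu^{n})$ the element $\pi\rho$ acquires irreducible factorizations of lengths $2$ and $n\geq 3$. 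If instead every nonzero element of $G$ has order $2$, then $|G|\geq 3$ forces $G\cong(\mathbb{Z}/2\mathbb{Z})^{k}$ with $k\geq 2$; choosing independent $a,b\in G$ and invertible primes $\mathfrak{p}_{a},\mathfrak{p}_{b},\mathfrak{p}_{c}$ coprime to $\mathfrak{f}$ in the classes $a,b,a+b$, one gets $\mathfrak{p}_{a}^{2}=(\pi_{a})$, $\mathfrak{p}_{b}^{2}=(\pi_{b})$, $\mathfrak{p}_{c}^{2}=(\pi_{c})$ and $\mathfrak{p}_{a}\mathfrak{p}_{b}\mathfrak{p}_{c}=(\mu)$ with $\pi_{a},\pi_{b},\pi_{c},\mu$ irreducible, and $(\mu^{2})=(\pi_{a}\pi_{b}\pi_{c})$ gives factorizations of lengths $2$ and $3$. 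In either case $R$ is not an HFD, which completes the contrapositive; so $|\Pic(R)|\leq 2$, hence $|\Cl(\overline{R})|\leq 2$, and $\overline{R}$ is an HFD by Theorem~\ref{carl}.

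The step I expect to be the main obstacle is the non-maximal-order ideal theory underpinning the middle paragraph: that invertible ideals coprime to the conductor enjoy unique factorization into primes, and --- more delicately --- that every Picard class is hit by such a prime (a density statement amounting to Chebotarev for the associated ring class field). These are the only genuinely non-formal inputs; granted them, the irreducibility checks and the Carlitz construction are routine bookkeeping. I would add that the proof in \cite{CO2} is instead organized directly around the norm map, but the route above through $\Pic(R)$ and Carlitz's theorem seems to me the cleanest.
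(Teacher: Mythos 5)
Your argument is correct as far as I can check, and it takes a genuinely different route from the one the paper points to: the paper states this theorem without proof, citing \cite{CO2}, where (as the surrounding text notes) the argument is organized around the norm map, and also \cite{R25} for a characterization of HFD orders. You instead reduce everything to Carlitz's theorem (Theorem \ref{carl}) via the surjection $\Pic(R)\twoheadrightarrow\Cl(\overline{R})$ and then run the Carlitz block construction inside $\Pic(R)$ using invertible primes coprime to the conductor. The two nonformal inputs you isolate are exactly the right ones and both are standard: unique factorization of invertible $R$-ideals coprime to $\mathfrak{f}$ (via the coprime-to-conductor bijection with ideals of $\overline{R}$), and the existence of primes coprime to $\mathfrak{f}$ in every class of $\Pic(R)$ (Chebotarev applied to the ray class field mod $\mathfrak{f}$, of which $\Pic(R)$ is a quotient group) --- and the latter is no heavier than what the classical Carlitz argument for maximal orders already uses. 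Your irreducibility checks are sound because any principal divisor of an ideal coprime to $\mathfrak{f}$ is itself coprime to $\mathfrak{f}$, so the unique ideal factorization applies to it; and the case split on $\Pic(R)$ (an element of order $\geq 3$ versus $2$-elementary of rank $\geq 2$) is exhaustive once $|\Pic(R)|\geq 3$. What the norm-based proof buys is independence from the conductor-level ideal theory and from any density theorem; what your proof buys is the stronger intermediate statement that an HFD order necessarily satisfies $|\Pic(R)|\leq 2$ --- a necessary condition that indeed appears in the modern characterizations of HFD orders --- from which the theorem then falls out formally. Either way, the result stands.
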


For a final application, we give a short proof of the following theorem highlighting the value of a norm-theoretic approach.

\begin{thm}
    Let $F/\mathbb{Q}$ be Galois with ring of integers $R$. If $[F:\mathbb{Q}]$ is odd and $R$ is an HFD then $R$ is a UFD.
\end{thm}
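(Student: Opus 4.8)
The plan is to reduce everything to Carlitz's theorem (Theorem~\ref{carl}) together with the behaviour of the Galois action on the class group encoded in Theorem~\ref{big}. Since $R$ is an HFD, Carlitz gives $|\text{Cl}(R)|\le 2$; if $|\text{Cl}(R)|=1$ then $R$ is a UFD and we are done, so the whole content is to exclude the case $|\text{Cl}(R)|=2$.

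So suppose $\text{Cl}(R)\cong\mathbb{Z}/2\mathbb{Z}$, and let $c$ be its unique nontrivial class. The first point is that $\text{Gal}(F/\mathbb{Q})$ automatically acts trivially on $\text{Cl}(R)$: each $\sigma$ restricts to a ring automorphism of $R$ (it preserves integrality over $\mathbb{Z}$), hence carries principal ideals to principal ideals and nonprincipal ideals to nonprincipal ideals, which in a group of order $2$ forces $[\sigma(I)]=[I]$ for every ideal $I$. In the language of the paper this is condition (4) of Theorem~\ref{big}, so it also records that $\mathfrak{N}$ is saturated, although only the action statement is needed.

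The contradiction then comes from the norm. Choose a nonprincipal prime ideal $\mathfrak{P}\subset R$ — one exists because the classes of prime ideals generate $\text{Cl}(R)$ — lying over a rational prime $p$, with residue degree $f$, ramification index $e$, and $g$ primes above $p$. Since $F/\mathbb{Q}$ is Galois, the decomposition theory recalled earlier gives $efg=[F:\mathbb{Q}]=:n$, and because $n$ is odd all of $e,f,g$ are odd. Now $N(\mathfrak{P})=(p^f)$, and, $F/\mathbb{Q}$ being Galois, $\prod_{\sigma\in\text{Gal}(F/\mathbb{Q})}\sigma(\mathfrak{P})$ is $\text{Gal}(F/\mathbb{Q})$-stable and equals $N(\mathfrak{P})R=p^fR$, a principal ideal of $R$. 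Passing to the class group and using that every conjugate $\sigma(\mathfrak{P})$ is again a nonprincipal prime over $p$, hence of class $c$, we obtain $0=\sum_{\sigma}[\sigma(\mathfrak{P})]=n\,c$; but $n$ odd gives $n\,c=c\ne 0$, a contradiction. Hence $|\text{Cl}(R)|=1$ and $R$ is a UFD.

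The only place requiring care — rather than any genuine difficulty — is keeping the Galois hypothesis doing both of its jobs: it is what makes all primes over $p$ conjugate (so that $n=efg$ and those conjugate primes all carry the same class), and it is what lets $\text{Gal}(F/\mathbb{Q})$ act on $\text{Cl}(R)$ at all. Once the action is pinned down as trivial and the parity of $efg$ is noted, the argument is forced; equivalently, one may phrase the last step as the observation that the exponent of $\text{Cl}(R)$ divides $[F:\mathbb{Q}]$ when the Galois action is trivial, so an odd degree makes that exponent $1$.
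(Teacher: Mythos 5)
Your proof is correct and takes essentially the same route as the paper's: reduce to class number $2$ via Carlitz, choose a nonprincipal prime $\mathfrak{P}$, and observe that $N(\mathfrak{P})R=\prod_{\sigma}\sigma(\mathfrak{P})$ is a product of an odd number of nonprincipal ideals, all in the nontrivial class, so it cannot be principal --- contradicting the fact that it is generated by $p^{f}\in\mathbb{Z}$. You merely make explicit some details (triviality of the Galois action on $\text{Cl}(R)$, the parity of $efg=n$, and where the principality contradiction actually lands) that the paper's terser argument leaves implicit.
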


\begin{proof}
    We utilize the results of Theorem \ref{big}. If $R$ is an HFD that is not a UFD, then by Theorem \ref{carl} the class number of $R$ is precisely $2$. Hence there is a prime ideal $\mathfrak{P}$ such that $\mathfrak{P}$ is not principal, but its square is principal. As $[F:\mathbb{Q}]=\vert\text{Gal}(F/\mathbb{Q})\vert$ is odd, then $\N(\mathfrak{P})$ is an odd product of conjugates of $\mathfrak{P}$ and hence not principal. This contradiction establishes the theorem.
\end{proof}

\section{Factorization in almost Dedekind domains}

To begin, we recall a domain $D$ is said to be Dedekind if $D$ is Noetherian and for all $M\in \Max(D)$ the localization $D_M$ is a Noetherian valuation domain.  Dropping the Noetherian assumption, we get a class of domains that are called almost Dedekind.  Dropping the requirement that the domain is Noetherian and the localization is Noetherian we get the class of Pr\"ufer domains.  Thus all Dedekind domains are almost Dedekind and all almost Dedekind domains are Pr\"ufer domains.  Furthermore, an almost Dedekind domain is Dedekind if and only if it is Noetherian, and a Pr\"ufer domain is almost Dedekind if and only if it contains no idempotent maximal ideals (recall that an ideal $I$ is said to be idempotent if $I^2=I$).

Factorization in Dedekind domains has been addressed, at least indirectly, in the previous sections.  It is to this end that we will assume that (unless stated otherwise) the almost Dedekind domains we discuss in the section are not Dedekind (that is, they are not Noetherian). Constructing almost Dedekind domains is an interesting topic in its own right and has been studied fairly intensively; a very good source for this is \cite{L2006}.  Another good reference is \cite{LL2003} in which ideal factorization in almost Dedekind domains was the topic.  The material from the previous sections of this paper inspires one to ponder how to utilize norm-like techniques in multiplicative ideal theory to tackle some basic factorization questions in almost Dedekind domains.

When considering factorization in almost Dedekind domains, a central initial question would be ``when is an almost Dedekind domain atomic?" The existence of an atomic almost Dedekind domain is known and one was constructed in \cite{Gr}, but what, if anything, can we say about stronger factorization properties? 

We briefly recall that a domain is
\begin{enumerate}
\item a {\it finite factorization domain} (FFD) if it is atomic and every nonzero nonunit is divisible by only finitely many atoms (up to associates),
\item a {\it bounded factorization domain} (BFD) if it is atomic and every nonzero nonunit has a bound on the lengths of its irreducible factorizations,
\item and has the ascending chain condition on principal ideals (ACCP) if every ascending chain of principal ideals stabilizes.
\end{enumerate}

\noindent For the above properties we have the implications 
\[
\text{FFD}\Longrightarrow\text{BFD}\Longrightarrow\text{ACCP}\Longrightarrow\text{atomic} 
\]
and none of the arrows can be reversed in general (see \cite{AAZ}).

Within the class of Dedekind domains, however, it is well known that the properties FFD, BFD, ACCP, and atomic are equivalent (and in fact every Dedekind domain has all of these properties).  It is natural to ask if they are all equivalent in the class of almost Dedekind domains.

In order to address these questions, we develop a tool analogous to the Dedekind-Hasse norm of the previous sections, but instead of focusing on the automorphisms or embeddings, we consider valuations.

Let $D$ be an almost Dedekind domain.  For each maximal ideal $M \in\Max(D)$ we know $D_M$ is a Noetherian valuation domain, and we have the valuation map $\nu_M: D_M\rightarrow \mathbb{N}_0$.  We take the following definition from \cite{rH2016}.

\begin{defn}
For nonzero $b\in D$ we define the norm of $b$ to be the net
$$N(b)=\big(\nu_M(b)\big)_{M\in \Max(D)} \subseteq \prod_{M\in \Max(D)} \mathbb{N}_0.$$ 
\end{defn}

Now since each map $\nu_M$ is a valuation, we have for nonzero elements $a, b \in D$ the property $\nu_M(ab)=\nu_M(a)+\nu_M(b)$, and this leads directly to the following.

\begin{thm}
    Let $D$ be an almost Dedekind domain.  For nonzero elements $a, b \in D$ we have $N(ab)=N(a)+N(b)$ where the addition of nets is defined to be componentwise.
\end{thm}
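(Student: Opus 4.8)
The plan is to reduce the claimed identity of nets to the coordinatewise identity, where it becomes nothing more than the defining multiplicativity of a valuation. First I would observe that since $D$ is a domain and $a,b$ are nonzero, the product $ab$ is again nonzero, so that $N(ab)$ is defined and the statement makes sense. Next, I would fix an arbitrary $M\in\Max(D)$; because $D$ is almost Dedekind, $D_M$ is a Noetherian valuation domain with valuation map $\nu_M\colon D_M\to\mathbb{N}_0$, and the inclusion $D\subseteq D_M$ guarantees that $\nu_M(a)$, $\nu_M(b)$, and $\nu_M(ab)$ are all defined elements of $\mathbb{N}_0$ (not merely of $\mathbb{Z}$). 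Applying the valuation axiom $\nu_M(xy)=\nu_M(x)+\nu_M(y)$ to $x=a$, $y=b$ yields $\nu_M(ab)=\nu_M(a)+\nu_M(b)$.

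Finally, since this equality holds for every $M\in\Max(D)$, and since addition of nets in $\prod_{M\in\Max(D)}\mathbb{N}_0$ is defined componentwise, the $M$-component of $N(ab)$ coincides with the $M$-component of $N(a)+N(b)$ for all $M$; hence $N(ab)=N(a)+N(b)$. There is no genuine obstacle here: the only points worth spelling out are that the product of nonzero elements of a domain is nonzero (so the left side is legitimate) and that each $\nu_M$ takes values in $\mathbb{N}_0$, which is exactly the fact that $b$ and $ab$ lie in $D$ and hence in $D_M$. The displayed remark immediately preceding the statement already records $\nu_M(ab)=\nu_M(a)+\nu_M(b)$, so the entire proof is the observation that this identity is inherited coordinatewise by the nets.
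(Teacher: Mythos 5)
Your argument is correct and is exactly the paper's (implicit) proof: the paper derives the identity directly from the componentwise definition of net addition together with the valuation property $\nu_M(ab)=\nu_M(a)+\nu_M(b)$ recorded in the sentence preceding the theorem. Your added remarks that $ab\neq 0$ and that each $\nu_M$ lands in $\mathbb{N}_0$ are harmless elaborations, not a different route.
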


The Dedekind-Hasse norm discussed earlier is multiplicative, and its corresponding normset is a multiplicative monoid, and we have a very similar situation for this new norm.  The norm on an almost Dedekind domain takes the multiplicative structure of the domain and transfers itself via monoid homomorphism to an additive monoid.  The identity element in the additive monoid is the zero net, and as the units of $D$ are characterized by not being contained in any maximal ideal of $D$, we see that the zero net is the image of any unit in $D$.  If $D^*$ denotes the nonzero elements of $D$, we let Norm$(D)=\{N(b) : b\in D^*\}$ and we refer to this as the normset of $D$.

We now introduce a partial ordering on Norm$(D)$ which reflects the divisibility relations in $D$.

\begin{defn}
    Let $a, b \in D^*$ we say $N(a)\leq N(b)$ if for all maximal ideals $M\in \Max(D)$ we have $\nu_M(a) \leq \nu_M (b)$.  Moreover, we say  $N(a) < N(b)$ if $N(a)\leq N(b)$ and there exists a maximal ideal $M\in \Max(D)$ with $\nu_M(a) < \nu_M (b)$.  
\end{defn}

\begin{thm}
    Let $D$ be an almost Dedekind domain with elements $a, b \in D^*$.  We have $a$ divides $b$ if and only if $N(a)\leq N(b)$.  Furthermore, $a$ is a proper divisor of $b$ (in the sense that $a$ and $b$ not associates) precisely when $N(a)<N(b)$.
\end{thm}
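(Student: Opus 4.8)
The plan is to reduce the divisibility question in the almost Dedekind domain $D$ to a local question at each maximal ideal, using the characterization of divisibility in the valuation domains $D_M$. First I would establish the forward implication: if $a$ divides $b$ in $D$, say $b=ac$ for some $c\in D^*$, then applying the norm and using that $N$ is additive (the previous theorem) gives $N(b)=N(a)+N(c)$; since $N(c)$ is a net of nonnegative integers, this yields $\nu_M(a)\leq \nu_M(b)$ for every $M\in\Max(D)$, i.e.\ $N(a)\leq N(b)$.

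For the converse, suppose $N(a)\leq N(b)$, so that $\nu_M(a)\leq\nu_M(b)$ for all $M$. The key step is to show that the fractional element $c:=b/a$ (a priori in the quotient field $F$ of $D$) in fact lies in $D$. Here I would use the standard fact that in a Pr\"ufer (in particular almost Dedekind) domain, $D=\bigcap_{M\in\Max(D)} D_M$, so it suffices to check $c\in D_M$ for every maximal ideal $M$. Since $D_M$ is a valuation domain with valuation $\nu_M$, and $\nu_M(c)=\nu_M(b)-\nu_M(a)\geq 0$ by hypothesis, we get $c\in D_M$. Intersecting over all $M$ gives $c\in D$, hence $b=ac$ with $c\in D$, so $a$ divides $b$.

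For the "furthermore" claim, recall that $a$ and $b$ are associates in $D$ if and only if $a\mid b$ and $b\mid a$, which by the equivalence just proved happens exactly when $N(a)\leq N(b)$ and $N(b)\leq N(a)$, i.e.\ $N(a)=N(b)$ (note the valuation vectors determine each other since the partial order is antisymmetric componentwise). Thus $a$ is a proper divisor of $b$ precisely when $a\mid b$ but $a,b$ are not associates, which translates to $N(a)\leq N(b)$ and $N(a)\neq N(b)$; by the definition of the strict order this is exactly $N(a)<N(b)$.

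The main obstacle is making the converse direction airtight: one must be careful that the valuation $\nu_M$ on $D$ extends to $D_M$ and indeed to $F$ in a way compatible with the net definition, and that the representation $D=\bigcap_M D_M$ is available. Both are standard for almost Dedekind (hence Pr\"ufer) domains, but I would want to cite the appropriate background (e.g.\ the facts recalled at the start of the section, or a reference such as \cite{L2006}) rather than reprove them. A minor subtlety worth a sentence is that $\nu_M(b)=0$ for all but the maximal ideals containing $b$, so the "net" $N(b)$ has finite support and $c=b/a$ has nonnegative valuation everywhere automatically outside that finite set --- nothing delicate happens at infinitely many places.
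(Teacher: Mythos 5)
Your argument is correct in all essentials and is the standard one; the paper itself states this theorem without proof (it is imported from \cite{rH2016}), so there is no in-text argument to compare against, but the expected proof is exactly the local-to-global reduction you give: additivity of $N$ for the forward direction, and $D=\bigcap_{M\in\Max(D)}D_M$ together with $\nu_M(b/a)=\nu_M(b)-\nu_M(a)\geq 0$ for the converse. The handling of the ``furthermore'' clause via $a\sim b \iff N(a)=N(b)$ and the componentwise antisymmetry of the partial order is also right.

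One side remark in your last paragraph is false and should be deleted: you claim that $N(b)$ has finite support, i.e.\ that $b$ lies in only finitely many maximal ideals. That is the finite character property, which holds in Dedekind domains but fails in precisely the almost Dedekind domains of interest here (the paper later notes that an almost Dedekind domain of finite character is already Dedekind). Fortunately nothing in your proof uses finite support --- the verification $c\in D_M$ is carried out at every maximal ideal individually, whether there are finitely or infinitely many containing $b$ --- so this is a wrong parenthetical, not a gap. Strike the sentence and the proof stands.
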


As opposed to the Dedekind-Hasse norm, this norm preserves all factorization properties of the domain.  We make this precise in the next theorem.

\begin{thm}
    If $D$ is an (almost) Dedekind domain and $D^{\bullet}$ the set of units of D,  then $D^* / D^\bullet \cong \text{Norm}(D).$ So if $X$ is any factorization property of a domain (e.g. UFD, HFD, FFD, BFD, or atomic) then $D$ satisfies this property if and only if the monoid $\text{Norm}(D)$ satisfies $X$.
\end{thm}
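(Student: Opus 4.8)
The plan is to exhibit the norm map as inducing a monoid isomorphism between the associate-class monoid $D^*/D^\bullet$ and $\text{Norm}(D)$, and then to observe that every factorization property in the list is intrinsic to the reduced multiplicative monoid of a domain, so it transfers across this isomorphism. First I would set up the map: by the previously established additivity $N(ab)=N(a)+N(b)$, the assignment $b\mapsto N(b)$ is a homomorphism from the multiplicative monoid $D^*$ to the additive monoid $\text{Norm}(D)$. Since a nonzero $b\in D$ is a unit exactly when it lies in no maximal ideal, i.e.\ exactly when $\nu_M(b)=0$ for every $M\in\Max(D)$, the group $D^\bullet$ maps to the zero net; hence the map factors through $D^*/D^\bullet$, yielding a well-defined surjective homomorphism $\bar N\colon D^*/D^\bullet\to\text{Norm}(D)$ (surjectivity being immediate from the definition of $\text{Norm}(D)$).

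Next I would prove $\bar N$ is injective, and this is where the earlier divisibility theorem does the work: if $N(a)=N(b)$ then $N(a)\leq N(b)$ and $N(b)\leq N(a)$, so by that theorem $a\mid b$ and $b\mid a$ in $D$, whence $a$ and $b$ are associates and determine the same class in $D^*/D^\bullet$. Therefore $\bar N$ is a monoid isomorphism $D^*/D^\bullet\cong\text{Norm}(D)$, which is the first assertion.

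For the transfer of factorization properties, I would recall that $D^*/D^\bullet$ is precisely the reduced divisibility monoid of $D$: nonzero nonunits correspond to non-identity elements, irreducible elements correspond to atoms, a factorization $x=\pi_1\cdots\pi_n$ corresponds to a decomposition of $[x]$ into atoms, and ascending chains of principal ideals correspond to ascending chains of principal ideals (equivalently, of divisibility) in the monoid. Consequently ``atomic'', ACCP, BFD, FFD, HFD, and UFD are each equivalent to the corresponding statement about the monoid $D^*/D^\bullet$; since $\bar N$ is an isomorphism of monoids, each is equivalently a statement about $\text{Norm}(D)$, giving the ``so'' clause.

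I do not expect a serious obstacle, but the step demanding the most care is the injectivity of $\bar N$: it rests entirely on the converse implication ``$N(a)\leq N(b)\Rightarrow a\mid b$'' of the earlier theorem, which itself uses that $D$ is almost Dedekind (each $D_M$ a valuation ring, so $b/a$ integral at every maximal ideal forces $b/a\in D$). It is also worth a sentence in the last paragraph to make explicit that the properties in question depend only on the monoid $D^*/D^\bullet$ and not on the additive structure of $D$ — standard, but it is the conceptual content of the final clause.
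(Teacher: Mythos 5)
Your proposal is correct and follows essentially the same route as the paper's proof: define $\phi(xD^\bullet)=N(x)$, note it is well-defined and onto, and deduce injectivity from the fact that equal norms force the elements to be associates (which, as you rightly emphasize, rests on the earlier theorem that $N(a)\leq N(b)$ implies $a\mid b$). Your write-up simply makes explicit the details that the paper leaves terse, in particular the appeal to the divisibility theorem for injectivity and the observation that the listed properties depend only on the reduced divisibility monoid.
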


\begin{proof}
    The function $\phi:D^*/D^{\bullet}\longrightarrow \text{Norm}(D)$ given by $\phi(xD^{\bullet})=\n(x)$ is well-defined and clearly onto. Now note if $\phi(xD^{\bullet})=\phi(yD^{\bullet})$ then $\n(x)=\n(y)$ and so $x$ and $y$ are associates in $D$. Hence $xD^{\bullet}=yD^{\bullet}$ and so $\phi$ is one to one. The last statement is now obvious.
\end{proof}

Intuitively, the monoid $\text{Norm}(D)$ corresponds to the reduced multiplicative monoid of the domain $D$.

Now while these structural theorems are nice results in their own right, the utility of this norm is best seen when studying almost Dedekind domains with a nonzero Jacobson radical.  We will denote the Jacobson radical of the domain $D$ by $\mathcal{J}.$

\begin{defn}
    We say an almost Dedekind domain $D$ is bounded if for all $b\in D^*$, there exists a $\eta \in \bbn$ such that $\nu_M(b)<\eta$ for all $M\in \Max(D)$.  If an almost Dedekind domain is not bounded we call it unbounded.  If for all nonunits $b\in D^*$ there is no upper bound on the valuations of $b$, we call the domain completely unbounded.
\end{defn}

Bounded almost Dedekind domains are equivalent to SP-domains (every nontrivial ideal is a product of radical ideals) which have been studied fairly extensively (see \cite{FHL13}, \cite{VY1978}, and \cite{O2005}). In \cite{rH2016} an unbounded almost Dedekind domain was constructed; the existence of a completely unbounded almost Dedekind domain has yet to be verified but conditions on its existence have been studied in \cite{spirito2023}, and a proof of the next theorem can be found therein.

\begin{thm}
    If $D$ is a completely unbounded almost Dedekind domain, then $\J=(0)$.
\end{thm}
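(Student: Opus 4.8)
The plan is to argue by contradiction. Suppose $\J \neq (0)$ and fix $0 \neq c \in \J$. Since every $D_M$ is a DVR, $D$ has Krull dimension one, so its nonzero primes are exactly its maximal ideals; as $c$ lies in every maximal ideal, $\sqrt{(c)} = \bigcap_{M \in \Max(D)} M = \J$. In particular $\J$ is a nonzero radical ideal, hence for each $M$ the ideal $\J D_M$ is a nonzero proper radical ideal of the DVR $D_M$, which forces $\J D_M = M D_M$; equivalently, $\nu_M(\J) = 1$ for every $M \in \Max(D)$.

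The crux is to upgrade this uniform local statement to the assertion that $\J$ is invertible. For each $M$ choose $c_M \in \J$ with $\nu_M(c_M) = 1$; because $c_M \in \J \subseteq M'$ gives $\nu_{M'}(c_M) \geq 1$ for every $M'$, the locus $V_M = \{M' \in \Max(D) : \nu_{M'}(c_M) = 1\}$ is the complement of $\{M' : \nu_{M'}(c_M) \geq 2\}$, which is Zariski-closed in $\Max(D)$. Thus $\{V_M\}_{M \in \Max(D)}$ is an open cover of the quasi-compact space $\Max(D)$, and finitely many members $V_{M_1}, \dots, V_{M_k}$ already cover. The finitely generated ideal $I = (c_{M_1}, \dots, c_{M_k}) \subseteq \J$ then satisfies $\nu_{M'}(I) = \min_i \nu_{M'}(c_{M_i}) = 1 = \nu_{M'}(\J)$ at every $M'$ (the lower bound $1$ since each $c_{M_i}\in\J\subseteq M'$, the upper bound $1$ since the $V_{M_j}$ cover), so $I$ and $\J$ have the same localizations and hence $I = \J$. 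Therefore $\J$ is finitely generated, and since $D$ is Pr\"ufer, $\J$ is invertible.

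Now I would run the ideal-transform argument. Set $D' = \bigcup_{n \geq 1} \J^{-n} \subseteq K$ (with $K$ the fraction field). Invertibility gives $1 \in \J\J^{-1} \subseteq \J D'$, so $\J D' = D'$. On the other hand, $x \in \J^{-n}$ forces $\nu_M(x) \geq -n$ for all $M$, so $D'$ omits $1/c$: indeed $1/c \in \J^{-n}$ would mean $\J^n \subseteq (c)$, i.e. $n \geq \sup_M \nu_M(c)$, which is impossible since $c$ is a nonunit and $D$ is completely unbounded. Hence $D \subseteq D' \subsetneq K$, so $D'$ is not a field and has a nonzero prime $Q'$. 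From $\J D' = D'$ we get $\J \not\subseteq Q'$, so $\J \not\subseteq Q' \cap D$; but $Q' \cap D$ is a prime of $D$, and $(0)$ is the only prime of $D$ not containing $\J$, so $Q' \cap D = (0)$. Then every nonzero element of $D$ is already a unit in $D'_{Q'}$, whence $D'_{Q'} = K$ is a field — contradicting $Q' \neq (0)$. This contradiction yields $\J = (0)$.

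The step I expect to be the main obstacle is the invertibility (equivalently, finite generation) of $\J$: the pointwise identity $\nu_M(\J) = 1$ is immediate, but manufacturing a finite generating set genuinely requires the quasi-compactness of $\Max(D)$ together with the closedness of the loci $\{M : \nu_M(x) \geq 2\}$, and invertibility is exactly what is needed to secure the relation $\J D' = D'$ that powers the final prime-contraction step. Everything after invertibility is essentially formal.
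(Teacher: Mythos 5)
The frame of your argument is sound at both ends: the opening reduction (that $\J$ is a radical ideal with $\J D_M=MD_M$ for every $M$, so $\nu_M(\J)=1$ everywhere) is correct, and the concluding ideal-transform argument is also correct \emph{once $\J$ is known to be invertible}. (For calibration: the paper does not prove this theorem itself; it refers the reader to \cite{spirito2023}.) The gap is the unsupported assertion that $\{M'\in\Max(D):\nu_{M'}(c_M)\geq 2\}$ is Zariski-closed. Only the first level set $\{M':\nu_{M'}(x)\geq 1\}=V(x)\cap\Max(D)$ is closed for free; closedness of the higher level sets is exactly the delicate point in this subject (it is essentially the obstruction to radical/SP factorization in almost Dedekind domains), and it is false in general. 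For a counterexample, run the tower of Example \ref{mess} via Theorem \ref{gi}, but insert a ramification step at each stage so that $(\omega_{n-1})=(q_n)^2(\omega_n)$ in $D_n$. The union $D^\infty$ is almost Dedekind with $\Max(D^\infty)=\{(q_n)\}_{n\geq 1}\cup\{M_\infty\}$, $M_\infty=(\omega_1,\omega_2,\ldots)$, and the element $q$ (which lies in $\J\neq(0)$) satisfies $\nu_{(q_n)}(q)=2$ for all $n$ while $\nu_{M_\infty}(q)=1$. Since any element lying in all the $(q_n)$ already lies in some $(\omega_k)\subseteq M_\infty$, the set $\{M':\nu_{M'}(q)\geq 2\}=\{(q_n)\}_n$ has $M_\infty$ in its closure and is not closed.

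The problem is structural, not cosmetic: your second paragraph nowhere uses complete unboundedness, so if it were valid it would prove that \emph{every} almost Dedekind domain with $\J\neq(0)$ has finitely generated (hence invertible) Jacobson radical. The same $D^\infty$ refutes that conclusion: any finitely generated $I=(z_1,\dots,z_k)$ has all its generators in some $D_n$, and for $m>n$ the inserted ramification forces $\nu_{(q_m)}(z_i)=2\nu_{(\omega_n)}(z_i)$, so $\nu_{(q_m)}(I)$ is even and can never equal $1=\nu_{(q_m)}(\J)$; hence $\J$ is not finitely generated there even though $\nu_M(\J)=1$ at every $M$. So no compactness argument of this shape can be rescued without genuinely invoking complete unboundedness at the finite-generation step, and that is precisely where a correct proof (as in \cite{spirito2023}) has to do its real work. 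Everything you wrote after the invertibility of $\J$ is fine, so the proof stands or falls with that one step.
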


The previous theorem shows atomicity in almost Dedekind (not Dedekind) domains fails if the domain has a non-trivial Jacobson radical, and so the aim of \cite{rH2017} was to characterize conditions that are necessary for an almost Dedekind domain to be atomic. For proofs of the theorems in the remainder of this section see \cite{rH2017}.

Let $D$ be an almost Dedekind domain and let $b\in D$; we set $Z(b)$ to be the set of nonunit divisors of $b$.  We say $Z(b)$ is finitely covered if there exist a set of maximal ideals $M=\{M_1, M_2, \cdots, M_n\}$ such that if $a\in Z(b)$ then $a\in M_i$ for some $i=1, 2, \cdots, n.$  If this condition holds for all $b\in D^*$, then we say $D$ is finitely coverable.  This leads to the following theorems. 

\begin{thm}  Let $D$ be an almost Dedekind domain.  If $D$ is finitely coverable, then $D$ is a BFD.
\end{thm}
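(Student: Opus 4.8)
The plan is to convert finite coverability into a numerical bound on the length of every factorization of a fixed element into nonunits, and then observe that such a bound yields both atomicity and the bounded-factorization property at once. First I would fix a nonzero nonunit $b\in D$ and apply finite coverability to obtain a finite set of maximal ideals $\{M_1,\dots,M_n\}$ such that every nonunit divisor of $b$ lies in at least one $M_j$. Put
\[
B_b:=\sum_{j=1}^n \nu_{M_j}(b),
\]
which is a well-defined nonnegative integer, since each $\nu_{M_j}$ takes values in $\mathbb{N}_0$ and the sum is finite.

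The crux is the estimate that every factorization $b=a_1a_2\cdots a_k$ into nonunits satisfies $k\leq B_b$. Each $a_i$ is a nonunit divisor of $b$, hence $a_i\in Z(b)$, so by the covering property $a_i\in M_{j(i)}$ for some index $j(i)$; since $M_{j(i)}D_{M_{j(i)}}$ is the maximal ideal of the valuation domain $D_{M_{j(i)}}$, this forces $\nu_{M_{j(i)}}(a_i)\geq 1$. Using additivity of each valuation, $\nu_{M_j}(b)=\sum_{i=1}^k \nu_{M_j}(a_i)$, and summing over $j$ and interchanging the order of summation gives
\[
B_b=\sum_{i=1}^k\sum_{j=1}^n \nu_{M_j}(a_i)\geq \sum_{i=1}^k \nu_{M_{j(i)}}(a_i)\geq k.
\]

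With this in hand the conclusion follows quickly. For atomicity: beginning with a nonzero nonunit and repeatedly replacing a non-irreducible factor by a product of two nonunits strictly increases the number of factors, so by the bound $B_b$ the process must halt after finitely many steps, leaving an irreducible factorization; hence every nonzero nonunit is a product of atoms. For boundedness: the same inequality, applied to irreducible factorizations of $b$, shows that every such factorization has length at most $B_b$. Therefore $D$ is a BFD.

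I do not expect a genuine obstacle here: once the finite cover is fixed, the argument is a short counting estimate resting only on the additivity and nonnegativity of the localized valuations. The one point requiring slight care is the passage from the length bound to atomicity, where one must phrase the repeated-refinement argument so that it manifestly terminates — but this is the standard deduction that a uniform bound on factorization lengths forces atomicity.
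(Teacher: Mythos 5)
Your proof is correct, and it is exactly the counting argument that the paper's valuation-net norm is set up to deliver (the paper itself defers the proof of this theorem to the cited reference, but the later length-function argument $\pi(b)=\lceil\norm{b}/t\rceil$ for Pr\"ufer domains is the same idea). The key estimate — each nonunit factor lies in some covering $M_j$ and hence contributes at least $1$ to $\sum_j\nu_{M_j}$, so every factorization of $b$ into nonunits has length at most $\sum_j\nu_{M_j}(b)$ — is sound, and the passage from that uniform bound to atomicity plus BFD is the standard one.
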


More topological ideas were presented in \cite{rH2017} that led to the discovery of a class of almost Dedekind domains where the BFD property and the ACCP property are equivalent.   The motivating idea was that the $Z(b)$ should not contain ``too many" comaximal elements.  Precise definitions and theorems are presented below.

\begin{defn} Let $D$ be an integral domain and let $b\in D^*$.   We say $Z(b)$ is disconnected if there exists $\{a_i\}_{i=1}^{\infty} \subseteq Z(b)$ such that $\mx(a_i)\cap \mx(a_j)= \emptyset$ whenever $i\not= j$, that is to say there is an infinite collection of divisors of $b$ all of which are comaximal to one another.  We say $Z(b)$ is connected if it is not disconnected.
\end{defn}

\begin{defn}   We say an integral domain $D$ is connected if for all $b\in D$, $Z(b)$ is connected.  We will say $D$ is disconnected if there exists $b\in D$ such that $Z(b)$ is disconnected.
\end{defn}

\begin{thm}  If $D$ satisfies ACCP, then $D$ is connected.
\end{thm}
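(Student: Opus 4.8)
The plan is to prove the contrapositive: if $D$ is disconnected then ACCP fails. So suppose there is $b\in D^*$ and an infinite family $\{a_i\}_{i=1}^{\infty}\subseteq Z(b)$ of nonunit divisors of $b$ that are pairwise comaximal, i.e.\ $\mx(a_i)\cap\mx(a_j)=\emptyset$ --- equivalently $(a_i,a_j)=D$ --- whenever $i\neq j$. (Note this forces $b$ to be a nonzero nonunit, since a unit has no nonunit divisors.) The goal is to extract from the $a_i$ a strictly ascending chain of principal ideals of $D$.

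The one substantive step is a divisibility lemma: if $c,c'\in D$ are comaximal and each divides $b$, then $cc'$ divides $b$. This is the familiar fact that the least common multiple of comaximal elements is their product, and I would prove it by the one-line computation $b=xcb+yc'b=cc'(xe'+ye)$ starting from $1=xc+yc'$ and $b=ce=c'e'$. Combined with the equally standard observation that comaximality is preserved under products --- if $a_n$ is comaximal with each of $a_1,\dots,a_{n-1}$, then multiplying the corresponding B\'ezout identities shows $a_n$ is comaximal with $a_1\cdots a_{n-1}$ --- an easy induction then gives that every partial product $d_n:=a_1a_2\cdots a_n$ divides $b$.

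With that in hand, for each $n$ let $e_n$ be the (unique, since $b\neq 0$ and $D$ is a domain) element of $D$ with $b=d_ne_n$; then $e_n\neq 0$, and cancelling $d_n$ in $d_ne_n=d_na_{n+1}e_{n+1}$ gives $e_n=a_{n+1}e_{n+1}$, so $(e_n)\subseteq(e_{n+1})$. This inclusion is strict: if $e_n$ and $e_{n+1}$ were associates, cancelling $e_{n+1}\neq 0$ in $a_{n+1}e_{n+1}=e_n=ue_{n+1}$ would force the nonunit $a_{n+1}$ to be a unit. Hence $(e_1)\subsetneq(e_2)\subsetneq(e_3)\subsetneq\cdots$ is a strictly ascending chain of principal ideals, so $D$ does not satisfy ACCP, completing the contrapositive.

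I do not expect a serious obstacle here: the argument is elementary and uses no hypothesis beyond ACCP. The only thing one really has to notice is that pairwise comaximality of the $a_i$ is exactly what keeps the partial products $d_n$ dividing the fixed element $b$; once that is established the cofactors $e_n$ supply the required chain automatically. So the ``hard part'' is merely isolating the divisibility lemma and organizing its inductive application cleanly.
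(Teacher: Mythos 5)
Your proof is correct and complete: the divisibility lemma for comaximal divisors, the induction showing each partial product $a_1\cdots a_n$ divides $b$, and the strictness of the chain of cofactor ideals $(e_n)$ are all handled properly (including the valid translation of $\mx(a_i)\cap\mx(a_j)=\emptyset$ into $(a_i,a_j)=D$). The paper itself gives no proof here, deferring to \cite{rH2017}, but your argument is precisely the natural one used there, so there is nothing to add.
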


Combining the ideal of connectivity and finitely coverable we get a large class of domains that do not distinguish between ACCP and BFD.  

\begin{thm}  Let $D$ be an almost Dedekind domain with a finite set of non-invertible maximal ideals, say $S=\{M_1, M_2, \cdots, M_l\}$.  The following conditions are equivalent.

\begin{itemize}
\item[i) ] $D$ is connected
\item[ii) ] $D$ satisfies ACCP
\item[iii) ] $D$ is a BFD.
\end{itemize}
\end{thm}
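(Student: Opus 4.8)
The plan is to run the cycle of implications (iii) $\Rightarrow$ (ii) $\Rightarrow$ (i) $\Rightarrow$ (iii). Two of these are already available and require nothing about $S$: (iii) $\Rightarrow$ (ii) is the general implication $\mathrm{BFD}\Rightarrow\mathrm{ACCP}$ recorded earlier, and (ii) $\Rightarrow$ (i) is exactly the theorem stating that any integral domain satisfying ACCP is connected. So the whole content of the theorem is the implication (i) $\Rightarrow$ (iii), and this is the only place where the finiteness of $S$ is used.

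To prove (i) $\Rightarrow$ (iii) I would show that a connected almost Dedekind domain with only finitely many non-invertible maximal ideals is finitely coverable, and then invoke the theorem that finite coverability implies the BFD property. So fix a nonunit $b\in D^{*}$ and set $\supp(b)=\{M\in\Max(D):\nu_M(b)\geq 1\}$. Every nonunit divisor $a$ of $b$ satisfies $\emptyset\neq \supp(a)\subseteq\supp(b)$ (if $\nu_M(b)=0$ then $\nu_M(a)=0$), so $\supp(b)$ always covers $Z(b)$; hence if $\supp(b)$ is finite there is nothing to prove. The work is entirely in the case $\supp(b)$ infinite: here, since the only non-invertible maximal ideals are $M_1,\dots,M_l$, all but at most $l$ of the maximal ideals in $\supp(b)$ are invertible, hence finitely generated.

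In that case I would argue by contradiction: assuming $Z(b)$ is not finitely covered, build an infinite family $a_1,a_2,\dots\in Z(b)$ of nonunit divisors of $b$ with the sets $\mx(a_i)$ pairwise disjoint, contradicting the connectedness of $Z(b)$. Having chosen $a_1,\dots,a_k$ with $\bigcup_{i\leq k}\mx(a_i)$ finite, the failure of finite coverability produces a nonunit divisor of $b$ lying outside this finite set of maximal ideals; the delicate point is to replace it by a divisor of $b$ that still has \emph{finite} support, so that the induction can be continued. This last step is the main obstacle: one must be able to peel off, as an honest element-divisor of $b$, the ``local piece'' of $b$ at a finitely generated maximal ideal $N\in\supp(b)$ — that is, to use the invertibility (equivalently, finite generation) of $N$, and of all but finitely many maximal ideals of $D$, to manufacture $a\mid b$ with $N\in\supp(a)$ and $\supp(a)$ finite (perhaps after first localizing away from $M_1,\dots,M_l$, which makes the relevant ideals invertible). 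Granting this, the resulting $a_i$ contradict the connectedness of $Z(b)$, so $Z(b)$ is finitely covered after all; since $b$ was arbitrary, $D$ is finitely coverable, hence a BFD, and the cycle closes.
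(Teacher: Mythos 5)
Your overall architecture is the right one, and it is the one the paper's surrounding results are clearly set up for: (iii)~$\Rightarrow$~(ii) is the general implication $\text{BFD}\Rightarrow\text{ACCP}$, (ii)~$\Rightarrow$~(i) is the stated theorem that ACCP implies connected, and the entire content is (i)~$\Rightarrow$~(iii), which you rightly propose to obtain from ``connected $+$ finitely many non-invertible maximal ideals $\Rightarrow$ finitely coverable'' together with the theorem that finitely coverable implies BFD. (The paper itself does not print a proof of this theorem --- it defers to \cite{rH2017} --- so the comparison here is with the route its stated lemmas lay out, which you have correctly reconstructed.)

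The problem is that you have not actually proved the one implication that carries all the weight. Your induction builds pairwise comaximal divisors $a_1,a_2,\dots$ of $b$, and to continue it you must, at each stage, produce a nonunit divisor of $b$ whose support is \emph{finite} and disjoint from the finitely many maximal ideals already used. You explicitly flag this as ``the main obstacle'' and then write ``Granting this\dots,'' so the key step is asserted, not established. It does not follow from invertibility of a maximal ideal $N\in\supp(b)$ alone: writing $(b)=N^{k}J$ with $k=\nu_N(b)$ and $J=(b)N^{-k}$ is an honest ideal factorization, but neither $N^{k}$ nor $J$ need be principal, so it does not hand you an element divisor of $b$ supported at $N$. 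What is actually needed is a lemma of the form: \emph{if $a$ is a nonunit divisor of $b$ with $\mx(a)\cap S=\emptyset$, then $\mx(a)$ is finite} (equivalently, an infinite family of maximal ideals all containing a fixed element must ``accumulate'' at a non-invertible one). That statement is true in the motivating examples (e.g.\ in the sequence domain $D^\infty$, any element avoiding the non-invertible ideal $M$ lies in only finitely many of the $(q_n)$) and is the real engine of the argument in \cite{rH2017}, but it requires its own proof --- topological or by a direct analysis of the invertible maximal ideals --- and nothing in your write-up supplies it. Until that lemma is proved, the chain from (i) to (iii) is broken, even though the scaffolding around it is correct.
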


The only atomic almost Dedekind domains in the literature are ACCP and BFD, and \cite{rH2017} lays out the conditions that would need to fail in order to have an atomic almost Dedekind domain that is not ACCP. It may be the case that these notions are equivalent in the class of almost Dedekind domains, and it would be nice to see a proof of their equivalence or an example showing they are not equivalent.

\section{Factorization in Pr\"{u}fer domains}

Building upon the works discussed in the previous sections, we extend our study to the class of  Pr\"{u}fer domains (often $1-$dimensional, but we will carefully differentiate as not all the results require this dimension restriction).  Recall a domain $D$ is said to be Pr\"{u}fer if $D_M$ is a valuation domain for all $M\in \Max (D)$.  Unlike in the class of almost Dedekind domains, we are not guaranteed that the associated value groups with each localization is discrete.  For example, if $M$ is an idempotent maximal ideal, we know the associated value group cannot be discrete.  And since a $1-$dimensional Pr\"{u}fer domain that is not almost Dedekind must contain at least one idempotent maximal ideal, we need to develop tools to work in this new arena.  Most of the results in this section can be found in the paper \cite{CH2018}.

To begin, we define a norm on the elements of a general Pr\"{u}fer domain in a fashion similar to the definition in the previous section.  Since Pr\"{u}fer domains need not be $1-$dimensional, we will define two norms.  For $P\in \Sp$ we let $\nu_P: D_P\setminus\{0\} \rightarrow G_P$ denote the local valuation map into the value group $G_P$ (written additively).  Also since $\Max(D) \subseteq \Sp$, $G_M=G_P$ if $P$ happens to be maximal.

\begin{defn}  Let $D$ be a Pr\"{u}fer domain.  For nonzero $b \in D$ we define
$$N(b)=(\nu_M(b))_{M\in \Max(D)} \subseteq \prod_{M\in \Max(D)} G_M$$ and
$$\bar{N}(b)=(\nu_P(b))_{P\in \Sp} \subseteq \prod_{P\in \Sp} G_P.$$
\end{defn}

From the properties of valuations we see that $N(ab)=N(a)+N(b)$ and $\bar{N}(ab)=\bar{N}(a)+\bar{N}(b)$, where addition is defined componentwise.

We once again define the associated normsets in the same way.  That is $$\text{Norm}(D)=\{N(b) : b\in D\setminus\{0\}\}$$
and
$$\overline{\text{Norm}(D)}=\{\bar{N}(b) : b\in D\setminus\{0\}\}.$$

We observe that both $\text{Norm}(D)$ and $\overline{\text{Norm}}(D)$ form additive monoids with the identity element being the zero net.  

An important observation is that $\overline{\text{Norm}(D)}$ contains ``more", in the sense of nonmaximal primes, components (unless $D$ is almost Dedekind).  But these extra components do not relay more information. This interesting observation is summarized in the following theorems.

\begin{thm} $D^{*}/U(D) \cong \text{Norm}(D)$ and $D^{*}/U(D) \cong \overline{\text{Norm}}(D)$.  Hence $\text{Norm}(D) \cong \overline{\text{Norm}}(D)$.
\end{thm}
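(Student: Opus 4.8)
The plan is to prove both isomorphisms by exhibiting natural surjections $D^*/U(D) \to \text{Norm}(D)$ and $D^*/U(D) \to \overline{\text{Norm}}(D)$ and checking injectivity, exactly as in the proof of the earlier structural theorem for almost Dedekind domains; the third isomorphism $\text{Norm}(D) \cong \overline{\text{Norm}}(D)$ is then immediate by composing one with the inverse of the other. First I would define $\phi : D^*/U(D) \to \text{Norm}(D)$ by $\phi(bU(D)) = N(b)$ and $\bar\phi : D^*/U(D) \to \overline{\text{Norm}}(D)$ by $\bar\phi(bU(D)) = \bar N(b)$. Well-definedness of $\phi$ follows because if $b = ua$ with $u \in U(D)$, then $\nu_M(b) = \nu_M(u) + \nu_M(a) = \nu_M(a)$ for every $M \in \Max(D)$, since a unit lies in no maximal ideal and hence has valuation $0$ at each $M$; the same computation with $P$ ranging over $\Sp$ handles $\bar\phi$, using that a unit lies in no prime ideal at all. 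Surjectivity of both maps is built into the definitions of the normsets.

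The heart of the matter is injectivity, and here the two maps behave slightly differently. For $\bar\phi$ the argument is clean: if $\bar N(a) = \bar N(b)$ then $\nu_P(a) = \nu_P(b)$ for every prime $P$, in particular for every maximal ideal, so $a$ divides $b$ and $b$ divides $a$ in $D$ (by the divisibility criterion $N(a) \leq N(b) \iff a \mid b$ recorded in the earlier theorem for the Pr\"ufer setting — or directly: $a \mid b$ in a Pr\"ufer domain can be tested locally at maximal ideals, and $\nu_M(a) \le \nu_M(b)$ for all $M$ gives $a \mid b$), whence $a$ and $b$ are associates and $aU(D) = bU(D)$. For $\phi$, the subtle point is that $N$ only records valuations at \emph{maximal} ideals, so I must argue that agreement of $N(a)$ and $N(b)$ already forces agreement at every (possibly non-maximal) prime. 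This is the one place I expect to need a genuine idea rather than a routine check.

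The main obstacle, then, is showing: if $\nu_M(a) = \nu_M(b)$ for all $M \in \Max(D)$, then $a$ and $b$ are associates in $D$ (equivalently, $\bar N(a) = \bar N(b)$). I would handle this via local divisibility: for each maximal ideal $M$, $\nu_M(a) \le \nu_M(b)$ forces $a \mid b$ in the valuation ring $D_M$, and since divisibility in a Pr\"ufer (indeed any) domain is a local property at maximal ideals, $a \mid b$ in $D$; symmetrically $b \mid a$ in $D$, so $a = ub$ for some $u \in U(D)$. Once this is in hand, $\phi$ is injective, and since $a = ub$ also gives $\bar N(a) = \bar N(b)$, we in fact see that $\ker \phi = \ker \bar\phi$ as subsets of $D^*/U(D)$ — both are trivial — so the composite $\bar\phi \circ \phi^{-1} : \text{Norm}(D) \to \overline{\text{Norm}}(D)$ is a well-defined monoid isomorphism, carrying $N(b) \mapsto \bar N(b)$. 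I would close by remarking that this isomorphism is precisely the statement that the extra non-maximal components of $\overline{\text{Norm}}(D)$ carry no additional information, as advertised.
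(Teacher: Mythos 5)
Your proposal is correct and follows essentially the same route the paper takes (the paper leaves this particular theorem unproved, deferring to \cite{CH2018}, but its proof of the analogous almost Dedekind statement is exactly your argument: the natural map $bU(D)\mapsto N(b)$ is well-defined and onto by construction, and injective because equality of norms forces $a\mid b$ and $b\mid a$ via local divisibility at maximal ideals). Your observation that the key injectivity step for $N$ is just the divisibility criterion $a\mid b \iff N(a)\leq N(b)$ already recorded in the preceding theorem is exactly right.
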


So essentially this theorem tells us that we really only need to consider localizations at the maximal ideals. This result mirrors the well-known result that Pr\"{u}fer domains are determined by local behavior at the maximal ideals (and so one does not need to know the behavior on all of $\text{Spec}(D)$).

We again define a partial ordering on the elements of the normset in the following way.

 \begin{defn}  We say $N(a)\leq N(b)$ if for all $M \in \Max(D)$ we have $\nu_M(a)\leq \nu_M(b)$.  We say $N(a)<N(b)$ if $N(a) \leq N(b)$ and there exists an $M\in \Max (D)$ with $\nu_M(a) < \nu_M(b).$
\end{defn}

With this ordering we make the following observation.

\begin{thm}
Let $D$ be a Pr\"{u}fer domain with $a,b\in D^*$, then $a\vert b$ if and only if $N(a)\leq N(b)$. Furthermore $a$ is a proper divisor of $b$ if and only if $N(a)<N(b)$
\end{thm}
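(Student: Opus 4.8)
The plan is to reduce everything to the statement about valuations, exactly as was done in the almost Dedekind case earlier in the paper, the only difference being that the value groups $G_M$ are now arbitrary totally ordered abelian groups rather than copies of $\mathbb{Z}$. Since $D$ is a Pr\"ufer domain, for each $M\in\Max(D)$ the localization $D_M$ is a valuation domain, and in a valuation domain divisibility is governed by the valuation: for $a,b\in D_M\setminus\{0\}$ one has $a\mid b$ in $D_M$ if and only if $\nu_M(b)-\nu_M(a)\geq 0$ in $G_M$, i.e.\ $\nu_M(a)\leq\nu_M(b)$. This is the local ingredient I would state first.

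Next I would globalize. The key point is the standard fact that in any integral domain $D$ one has $(a)\subseteq(b)$ if and only if $(a)D_M\subseteq(b)D_M$ for every maximal ideal $M$ (an intersection-of-localizations argument: $D=\bigcap_{M\in\Max(D)}D_M$, and $b\mid a$ in $D$ iff $a/b\in D$ iff $a/b\in D_M$ for all $M$ iff $b\mid a$ in each $D_M$). Combining this with the previous paragraph: $a\mid b$ in $D$ $\iff$ $a\mid b$ in $D_M$ for all $M$ $\iff$ $\nu_M(a)\leq\nu_M(b)$ for all $M$ $\iff$ $N(a)\leq N(b)$ by the definition of the ordering on $\text{Norm}(D)$. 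That establishes the first assertion.

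For the ``proper divisor'' refinement, note that $a\mid b$ with $a,b$ non-associates means $a\mid b$ but $b\nmid a$. By the equivalence just proved, $b\nmid a$ means it is not the case that $\nu_M(b)\leq\nu_M(a)$ for all $M$, i.e.\ there exists some $M_0$ with $\nu_{M_0}(a)<\nu_{M_0}(b)$ (using that $G_{M_0}$ is totally ordered, so $\nu_{M_0}(b)\not\leq\nu_{M_0}(a)$ forces $\nu_{M_0}(a)<\nu_{M_0}(b)$). Together with $N(a)\leq N(b)$ this is precisely $N(a)<N(b)$. Conversely $N(a)<N(b)$ gives $a\mid b$ by the first part, and the strict inequality at $M_0$ prevents $a$ and $b$ from being associates. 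I should also remark that $a$ and $b$ are associates in $D$ exactly when $N(a)=N(b)$, which follows from the isomorphism $D^{*}/U(D)\cong\text{Norm}(D)$ recorded just above, so the dichotomy $N(a)\le N(b)$ splits cleanly as ``$N(a)=N(b)$ (associates)'' or ``$N(a)<N(b)$ (proper divisor)''.

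The only mildly delicate step is the globalization lemma $D=\bigcap_M D_M$ and the passage from local divisibility to global divisibility; everything else is immediate from the definition of a valuation and the total ordering on each $G_M$. Since this is a basic fact about Pr\"ufer (indeed arbitrary) domains, I would simply cite it or dispatch it in a line, so there is no real obstacle here — the proof is essentially the same as in the almost Dedekind section with $\mathbb{N}_0$ replaced by $G_M$.
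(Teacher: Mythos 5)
Your proof is correct: the local fact that divisibility in a valuation domain is governed by the valuation, combined with the globalization $D=\bigcap_{M\in\Max(D)}D_M$ and the observation that associates are detected by equality of norms, gives exactly the intended argument. The paper itself states this theorem without proof (deferring to the cited reference \cite{CH2018}), and your write-up supplies the standard local-to-global argument that any such proof would use, so there is nothing to flag.
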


This simple proposition powers the proof of the following theorem, which at its essence tells us we can really study just the normsets to understand factorization in the domain. The following is \cite[Theorem 2.5]{CH2018}.

\begin{thm}  Let $\mathscr{F}$ be any one of the conditions a) unique factorization, b) half-factorial, c) bounded factorization, d) finite factorization, e) ACCP, f) atomic.  A Pr\"{u}fer domain $D$ has factorization property $\mathscr{F}$ if and only if $\text{Norm}(D)$ has factorization property  $\mathscr{F}$.
\end{thm}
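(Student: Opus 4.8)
The plan is to deduce the theorem entirely from the two results already in hand in this section: the monoid isomorphism $D^{*}/U(D)\cong\text{Norm}(D)$ and the divisibility characterization $a\mid b\iff N(a)\le N(b)$ (with $a$ a proper divisor of $b$ iff $N(a)<N(b)$). The guiding principle is that each property $\mathscr{F}\in\{\text{UFD},\text{HFD},\text{BFD},\text{FFD},\text{ACCP},\text{atomic}\}$ is really a property of the \emph{reduced multiplicative monoid} $D^{*}/U(D)$ of $D$, and is invariant under isomorphism of reduced cancellative commutative monoids; transporting it across the isomorphism $\phi\colon aU(D)\mapsto N(a)$ then settles both implications simultaneously.

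First I would set up the dictionary. Since $N$ restricts to a surjective monoid homomorphism $D^{*}\to\text{Norm}(D)$ (writing the target additively) with $N(a)=N(b)$ exactly when $a$ and $b$ are associates in $D$, the rule $\phi(aU(D))=N(a)$ is a well-defined isomorphism $D^{*}/U(D)\xrightarrow{\sim}\text{Norm}(D)$; in particular $\text{Norm}(D)$ is a reduced cancellative commutative monoid. By the divisibility theorem above, $\phi$ carries the divisibility preorder on $D^{*}$ to the partial order $\le$ on $\text{Norm}(D)$ and proper divisibility to $<$. Consequently $N(a)$ is the zero net iff $a\in U(D)$, and $N(\pi)$ is an atom of $\text{Norm}(D)$ iff $\pi$ is an irreducible element of $D$; this transports the notions of atom, associate class of divisors, factorization (decomposition into atoms), factorization length, and ascending chain of principal ideals.

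Next I would run through the six properties, each a one-line check. $D$ is atomic iff every nonzero element of $\text{Norm}(D)$ is a finite sum of atoms. $D$ satisfies ACCP iff $\text{Norm}(D)$ admits no infinite strictly $\le$-descending chain, since an ascending chain $(a_1)\subseteq(a_2)\subseteq\cdots$ of principal ideals corresponds to $N(a_1)\ge N(a_2)\ge\cdots$. The conditions BFD (the set of factorization lengths of each element is bounded), HFD (atomic, and all factorization lengths of each element agree), FFD (atomic, and each nonzero nonunit lies above only finitely many atoms in $\le$), and UFD (atomic, with factorizations unique up to order and associates) are each phrased purely in terms of atoms, divisibility, and lengths, hence are preserved in both directions by $\phi$.

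I do not expect any step to be a serious obstacle: all the substance is already packaged in the isomorphism theorem and the divisibility theorem, and what remains is the bookkeeping of confirming that each listed property only ``sees'' the monoid structure. The one place that warrants a moment's care is the atom correspondence, specifically the need to lift a relation $N(\pi)=x+y$ in $\text{Norm}(D)$ to a genuine factorization $\pi=bc$ in $D$; this is immediate, since choosing $b',c'\in D^{*}$ with $N(b')=x$ and $N(c')=y$ gives $N(b'c')=N(\pi)$, so $\pi$ and $b'c'$ are associates and the unit can be absorbed. With that observed, the proof is complete.
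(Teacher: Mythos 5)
Your proposal is correct and follows exactly the route the paper intends: the isomorphism $D^{*}/U(D)\cong\text{Norm}(D)$ together with the divisibility characterization $a\mid b\iff N(a)\le N(b)$ transports each of the six properties, all of which are invariant under isomorphism of reduced cancellative monoids (the paper itself only sketches this, deferring to the cited source, and proves the parallel almost Dedekind statement by precisely this argument). Your extra care in lifting $N(\pi)=x+y$ to an actual factorization of $\pi$ up to a unit is a worthwhile detail the paper leaves implicit.
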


For $1-$dimensional Pr\"{u}fer domains, we make a crucial definition. The $1-$dimensionality is needed for comparability in $\mathbb{R}$.

\begin{defn}  Let $D$ be a $1-$dimensional Pr\"{u}fer domain.   We say $D$ is uniformly bounded if for all nonzero nonunit $b\in D$, there exists $\delta>0, \eta>0 \in \mathbb{R}$ such that for all $M \in \Max(D)$ we have  $\delta<\nu_M(b)<\eta$.
\end{defn}

With this in hand, we now get a result similar to our atomicity result for almost Dedekind domains in this more general case.

\begin{thm}  If $D$ is an atomic $1-$dimensional Pr\"{u}fer domain with $\J\neq(0)$, then $D$ is a semilocal PID.
\end{thm}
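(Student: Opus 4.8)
The plan is to first exploit the hypothesis $\J\neq(0)$ together with atomicity to force $\Max(D)$ to be finite, and then to upgrade ``semilocal Pr\"ufer domain'' to ``PID.'' For the reduction, choose a nonzero $b\in\J$; since $\J$ is contained in every maximal ideal, $b$ is a nonunit, so atomicity provides a factorization $b=\pi_1\pi_2\cdots\pi_n$ with each $\pi_i$ irreducible. Because $b$ lies in every $M\in\Max(D)$ and each such $M$ is prime, $M$ contains at least one $\pi_i$; equivalently, writing $V(\pi_i)=\{M\in\Max(D):\pi_i\in M\}$, we get $\Max(D)=\bigcup_{i=1}^{n}V(\pi_i)$. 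So it suffices to prove that each $V(\pi_i)$ is finite, and this is precisely where the $\J\neq(0)$ hypothesis is essential (without it an atomic $1$-dimensional Pr\"ufer domain may have infinitely many maximal ideals).

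Proving that each $V(\pi_i)$ is finite is the heart of the matter, and I expect it to be the genuine obstacle. This is the step where atomicity must substitute for a Noetherian hypothesis: in a Dedekind domain $(\pi_i)$ is a finite product of primes and finiteness of $V(\pi_i)$ is automatic, whereas here $(\pi_i)$ is only known to be an invertible (finitely generated) ideal. I would argue by contradiction, assuming some $V(\pi)$ (with $\pi=\pi_i$ irreducible and $\pi\mid b\in\J$) is infinite, and use the Pr\"ufer hypothesis — invertibility and local principality of $(\pi)$, comaximality of distinct maximal ideals, and recovery of finitely generated ideals from their localizations — together with the order-theoretic description ``$N(x)\le N(\pi)\iff x\mid\pi$'' of divisibility in $\text{Norm}(D)$ to exhibit a proper \emph{principal} factorization of $\pi$, contradicting irreducibility. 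In effect one must show that a failure of finite character over an element of $\J$ is incompatible with atomicity, which runs parallel to the already-quoted fact that a completely unbounded almost Dedekind domain has trivial Jacobson radical; it may well require first establishing a stronger factorization property (such as ACCP or BFD) for $D$ under these hypotheses. Granting this, $\Max(D)=\bigcup_{i=1}^n V(\pi_i)$ is finite, so $D$ is a semilocal Pr\"ufer domain.

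The remaining step is formal. A semilocal domain has trivial Picard group, so every invertible ideal of $D$ is principal; since $D$ is Pr\"ufer, every finitely generated ideal is invertible, hence principal, i.e. $D$ is a B\'ezout domain. A B\'ezout domain is a GCD domain, and in a GCD domain every irreducible element is prime; as $D$ is atomic, every nonzero nonunit is then a product of prime elements, so $D$ is a UFD. Finally, since $D$ is $1$-dimensional, any nonzero prime ideal $P$ contains a prime element $p$ (take a prime factor of a nonzero element of $P$), so $(p)\subseteq P$ is a nonzero prime and hence $(p)=P$ by dimension one; thus every prime ideal of $D$ is principal and $D$ is a PID. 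Combining the two steps, $D$ is a semilocal PID.
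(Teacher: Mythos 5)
Your second step is sound: semilocal implies trivial Picard group, hence (with Pr\"ufer) B\'ezout, hence GCD; atomic GCD implies UFD; and a domain in which every prime ideal is principal is a PID, so dimension one finishes it. The problem is that the entire weight of the theorem rests on your first step, and there you prove nothing. After the (correct) reduction $\Max(D)=\bigcup_{i=1}^{n}V(\pi_i)$ coming from a factorization $b=\pi_1\cdots\pi_n$ of a nonzero $b\in\J$, the finiteness of each $V(\pi_i)$ is exactly the hard content, and you explicitly defer it (``I would argue by contradiction\dots'', ``Granting this\dots''). You give no mechanism for producing the promised proper principal factorization of $\pi$ from the assumption that $V(\pi)$ is infinite; invertibility and local principality of $(\pi)$ alone do not obviously yield one. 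Note that atomic one-dimensional Pr\"ufer (indeed almost Dedekind) domains with infinitely many maximal ideals do exist --- e.g.\ the atomic almost Dedekind domain of \cite{Gr} cited earlier in the paper --- so any argument for finiteness of $V(\pi_i)$ must exploit $\J\neq(0)$ in a way your sketch never does beyond the observation that $\pi_i$ divides an element of $\J$. As it stands this is a genuine gap, not a routine verification.

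It is worth contrasting your plan with the paper's proof, which runs in the opposite order and never establishes semilocality up front. The paper invokes an external result (\cite[Corollary 6.2]{O17}) asserting that a one-dimensional Pr\"ufer domain with nonzero Jacobson radical is already B\'ezout --- no finiteness of $\Max(D)$ required --- then passes to GCD, to UFD via atomicity, to PID via dimension one, and only at the very end deduces semilocality from the fact that a PID has finite character, so a nonzero element of $\J$ lies in only finitely many maximal ideals. In other words, the hard input is the Picard-group/B\'ezout statement, which the paper outsources to the literature; your approach would need either that result or a genuine substitute for it precisely at the point you left blank.
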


\begin{proof}
    Let D be atomic and let $\J\neq(0)$. It remains to show that D is a semilocal
PID. It follows from \cite[Corollary 6.2]{O17} that D is a B\'ezout domain, and hence D is
a GCD-domain. Since D is an atomic GCD-domain, we have that D is a UFD, and
thus D is a PID (since D is 1-dimensional). Since D is a PID and $\J\neq(0)$, it follows
that D is semilocal (since D is of finite character). 
\end{proof}

Our reduction to the $1-$dimensional case was implemented to take advantage of the Archimedean property. As $1-$dimensional Pr\"{u}fer domains are Archimedean, their associated value groups can be realized as an additive subgroup of the real numbers.  This leads to the notion of another norm-like object.  In order for this new norm-like object to have meaning on all the elements of the Pr\"{u}fer domain we need to restrict the class of domains in which we are going to work.  We say a domain is of finite character if every element is contained in only finitely many maximal ideals; we restrict our study to $1-$dimensional Pr\"ufer domains of finite character. 

\begin{defn}  Let $D$ be a $1-$dimensional Pr\"ufer domain of finite character.  For $b\in D$ we define the length of $b$ to be $\norm{b}=\sum_{M\in \Max(D)} \nu_M(b)$.
\end{defn}

It is clear that $\norm{ab}=\norm{a} + \norm{b}$.  Further, if $a\vert b$ then $\nu_M(a)\leq \nu_M(b)$ for all $M \in \Max(D)$ thus $\norm{a}\leq \norm{b}$.  The converse is clearly not true, as we can have $\norm{a}\leq \norm{b}$ with $a$ and $b$ being comaximal.

We now produce a theorem that shows that the BFD property and the ACCP property cannot be distinguished in this class of domains.  Moreover, it shows that in this particular class of Pr\"ufer domains, atomic can be added to the list of equivalent conditions.  The proof relies heavily on the fact that the domain is of finite character, meaning the norm of each element is 0 in all but finitely many components.  We need one more definition before we can state the theorem; recall that $Z(b)$ is the set of nonunit divisors of $b$.

\begin{defn}  Let $D$ be a $1-$dimensional Pr\"ufer domain of finite character.  For $b\in D$ we define the set of all lengths of all divisors to be $S_b=\{\norm{d} : d \in Z(b) \}.$
\end{defn}

\begin{thm}
Let $D$ be a $1-$dimensional Pr\"ufer domain of finite character.  The following are equivalent:
\begin{itemize}
\item[(i) ] For all $b\in D$, $\inf S_b>0$.
\item[(ii) ] $D$ is a BFD.
\item[(iii) ] $D$ satisfies ACCP.
\item[(iv) ] $D$ is atomic.
\end{itemize}
\end{thm}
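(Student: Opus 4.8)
The plan is to prove the chain of implications (i) $\Rightarrow$ (ii) $\Rightarrow$ (iii) $\Rightarrow$ (iv) $\Rightarrow$ (i). The implications (ii) $\Rightarrow$ (iii) $\Rightarrow$ (iv) are the completely general ones recorded earlier (a BFD satisfies ACCP and an ACCP domain is atomic), valid in every integral domain, so nothing new is needed there. The content lies entirely in (i) $\Rightarrow$ (ii) and, above all, in (iv) $\Rightarrow$ (i).

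For (i) $\Rightarrow$ (ii), I would fix a nonzero nonunit $b$ and set $\alpha=\inf S_b>0$. If $b=a_1a_2\cdots a_n$ is any factorization of $b$ into nonunits, each $a_i$ is a nonunit divisor of $b$, hence $a_i\in Z(b)$ and $\norm{a_i}\geq\alpha$; additivity of $\norm{\cdot}$ then gives $n\alpha\leq\norm{b}$, so $n\leq\norm{b}/\alpha$. Thus every factorization of $b$ into nonunits has length at most $\norm{b}/\alpha$. In particular a factorization of maximal length exists, and such a factorization must consist of atoms (otherwise one could split a reducible factor and lengthen it); so $b$ is a product of atoms, and all atomic factorizations of $b$ are bounded in length by $\norm{b}/\alpha$. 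Hence $D$ is atomic with bounded factorization lengths, i.e. a BFD.

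For (iv) $\Rightarrow$ (i), assume $D$ is atomic, fix a nonzero nonunit $b$, and argue by contradiction that $\inf S_b>0$. The key reduction: since $D$ is atomic, every $d\in Z(b)$ is a product of atoms $c_1\cdots c_s$ with $c_1\mid d\mid b$, so $\norm{d}\geq\norm{c_1}\geq\alpha_b:=\inf\{\norm{c}: c \text{ an atom dividing } b\}$; hence $\inf S_b\geq\alpha_b$, and it suffices to prove $\alpha_b>0$. Suppose not, so there are atoms $c_m\mid b$ with $\norm{c_m}\to 0$. By finite character, $b$ — and hence each $c_m$ — lies in only finitely many maximal ideals $M_1,\dots,M_k$; each nonunit $c_m$ has $\nu_{M_i}(c_m)>0$ for some $i$, so after passing to a subsequence and relabelling we may assume $\nu_{M_1}(c_m)>0$ for all $m$, whence $0<\nu_{M_1}(c_m)\leq\norm{c_m}\to 0$. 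If $M_1$ were not idempotent, $D_{M_1}$ would be a discrete rank-one valuation ring and $\nu_{M_1}$ would take values in a discrete subgroup of $\mathbb{R}$, contradicting $\nu_{M_1}(c_m)\to 0^{+}$; hence $M_1$ is idempotent and $G_{M_1}$ is dense in $\mathbb{R}$.

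The remaining step — the one I expect to be the main obstacle — is to turn this into a genuine contradiction by manufacturing, for some $m$, a proper nonunit divisor of the atom $c_m$ inside $D$. The plan is to pick $m$ large together with a second atom $c_{m'}$ satisfying $0<\nu_{M_1}(c_{m'})<\nu_{M_1}(c_m)$, form the (invertible, two-generated) ideal $J=(c_{m'})+(c_m)$, and factor $(c_m)=JK$ with $K=(c_m)J^{-1}$ integral; since $\nu_{M_1}(J)=\nu_{M_1}(c_{m'})$ lies strictly between $0$ and $\nu_{M_1}(c_m)$, both $J$ and $K$ are proper. One would then upgrade this ideal-theoretic factorization to an element-theoretic one $c_m=xy$ with $x,y$ nonunits — contradicting that $c_m$ is an atom — by combining $1$-dimensionality, finite character, and the approximation (independence) theorem for the finitely many valuations $\nu_{M_1},\dots,\nu_{M_k}$ to realize divisors of $c_m$ with prescribed values at the $M_i$ and value $0$ at all other maximal ideals. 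Alternatively, when $\J\neq(0)$ one may short-circuit this using the earlier theorem that an atomic $1$-dimensional Pr\"ufer domain with nonzero Jacobson radical is a semilocal PID (hence has no idempotent maximal ideals), and then treat the case $\J=(0)$ separately. This is exactly the place where the finite-character hypothesis does the heavy lifting, as flagged in the text immediately preceding the theorem.
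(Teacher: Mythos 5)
Your chain (i)$\Rightarrow$(ii)$\Rightarrow$(iii)$\Rightarrow$(iv) is fine, and (i)$\Rightarrow$(ii) is essentially the paper's argument (the paper uses the bound $\lceil\norm{b}/t\rceil$ in the same way). The gap is in (iv)$\Rightarrow$(i), and you have correctly located it: after reducing to showing that the atoms dividing $b$ have norms bounded away from $0$, and tracing the trouble to an idempotent maximal ideal $M_1$ with dense value group, you propose to factor the ideal $(c_m)=JK$ with $J=(c_{m'})+(c_m)$ and then ``upgrade'' this to an element factorization $c_m=xy$. That upgrade is precisely what you cannot do in general: $J$ need not be principal, and producing $x\in D$ with prescribed values at $M_1,\dots,M_k$ \emph{and} value $0$ at every other maximal ideal is a strong approximation statement you have not established and should not expect for free --- the whole point of the later sections is that these domains have large, hard-to-control ideal class semigroups, so ideal factorizations generally cannot be realized by elements. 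As written, (iv)$\Rightarrow$(i) is not proved.

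The paper closes the loop without ever trying to factor an atom; the contradiction comes instead from one atom \emph{properly dividing} another. By finite character $\mx(b)$ is finite, so the supports $\mx(a)\subseteq\mx(b)$ of atoms realize only finitely many sets $A_1,\dots,A_l$; pick one atom $a_i$ with $\mx(a_i)=A_i$ for each $i$, set $t_i=\min\{\nu_M(a_i):M\in A_i\}$ and $T=\min_i t_i$, and claim $\xi=T/2$ is a lower bound for $S_b$. If some $d\in Z(b)$ had $\norm{d}\leq\xi$, atomicity gives an atom $a\mid d$, necessarily with $\mx(a)=A_i$ for some $i$; then $\nu_M(a)\leq\norm{a}\leq\xi<t_i\leq\nu_M(a_i)$ for every $M\in A_i$, while $\nu_M(a)=0=\nu_M(a_i)$ elsewhere, so $N(a)<N(a_i)$ and $a$ is a proper divisor of the atom $a_i$ by the divisibility criterion --- a contradiction. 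In your own setup the same move finishes the job directly: by pigeonhole infinitely many of your atoms $c_m\mid b$ share a common support $A$, and as soon as $\norm{c_{m'}}<\min_{M\in A}\nu_M(c_m)$ you get $c_{m'}\mid c_m$ properly, which is absurd. So the missing ingredient is not an approximation theorem but the comparison $a\mid b\iff N(a)\leq N(b)$ applied to two atoms with the same support.
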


\begin{proof}  Let $b\in D$ and assume $\inf S_b=t>0$.  Now for all $d\in Z(b)$ we have $\norm{d}\geq t$.  Thus $\pi(b)=\lceil\frac{\norm{(b)}}{t}\rceil$ is a bound on the length of all possible factorizations of $b$. Thus $D$ is a BFD.

Clearly $(\text{ii})$ implies $(\text{iii})$ and $(\text{iii})$ implies $(\text{iv})$.

Now we assume that $D$ is atomic and let $b\in D$ with $|\mx(b)|=k$.  Now set $H=\{A \in \mathcal{P}(\mx(b))
: \text{ there exists an atom } a \text{ with } \mx(a)=A\}$, where $\mathcal{P}(\mx(b))$ is the power set of $\mx(b)$.  Since $H$ is finite we will rewrite $H=\{A_1, A_2, \cdots A_l\}$.  Now we find atoms $a_1, a_2, \cdots, a_l$ with $\mx(a_i)=A_i$.

Now set $t_i=\min\{\nu_M(a_i): M \in \mx(a_i)\}$ and set $T=\min\{t_1, t_2, \cdots, t_l\}$.  Now we claim that $\xi=\frac{T}{2}$ is a lower bound for $S_b$.

Suppose $d\in Z(b)$ with $\norm{d}=\xi$.  Now since $D$ is atomic, $d$ must be divisible by some atom $a$.   It must be the case that $\mx(a)=A_i$ for some $i$.  We have $\xi<\nu_M(a_i)$ for all $M\in \mx(a_i)$.  But this is impossible as it implies $d$ divides the atom $a_i$.  Therefore we conclude that $S_b$ has a lower bound.
\end{proof}

One would have no reason to believe that atomic would imply BFD (indeed, this is far from true in a general setting), but it is true in this class of Pr\"ufer domains.

One might also ask about finite factorization property.  Recall that a domain $D$ is said to be a finite factorization domain (FFD) if for all nonzero nonunits $b\in D$ we have the set of nonunit divisors of $b$, $Z(b)$, being finite.  The Pr\"ufer domain constructed in \cite{Gr} is in fact an FFD.  To verify this, we develop a characterization of all FFDs in the class of $1-$dimensional Pr\"ufer domains of finite character.

Let $D$ be a Pr\"ufer domain.  We let $\Id=\{M\in \Max(D): M=M^2\}$ and  $\Nid=\{M\in \Max(D): M \not= M^2\}.$

\begin{defn} We say $\Id$ is covered by non-idempotents if $$\cup_{M\in \Id} M \subset \cup_{M\in \Nid} M.$$
\end{defn}

\begin{thm} Let $D$ be a $1-$dimensional Pr\"ufer domain of finite character. If $\Id$ is covered by non-idempotents, then $D$ is a BFD.
\end{thm}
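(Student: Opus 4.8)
The plan is to invoke the earlier equivalence theorem for $1$-dimensional Pr\"ufer domains of finite character, which reduces the BFD property to the condition that $\inf S_b>0$ for every (nonzero nonunit) $b\in D$. So the goal is a uniform positive lower bound on $\norm{d}$ as $d$ ranges over the nonunit divisors of a fixed $b$. The one substantive ingredient is the standard valuation-theoretic fact that a maximal ideal $M$ of a $1$-dimensional Pr\"ufer domain is non-idempotent if and only if its value group $G_M\subseteq\mathbb{R}$ is discrete, equivalently has a least positive element $\varepsilon_M>0$: a rank-one ordered group is either dense or infinite cyclic, and $MD_M=(MD_M)^2$ exactly when $G_M$ is dense, which is the observation already noted in the text. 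Hence for $M\in\Nid$, every nonzero $x\in M$ satisfies $\nu_M(x)\geq\varepsilon_M$.

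The combinatorial core is the claim that every nonunit $d\in D$ lies in some non-idempotent maximal ideal. Indeed $d$ lies in at least one maximal ideal $M$; if $M\in\Nid$ this is immediate, and if $M\in\Id$ then the hypothesis $\bigcup_{M\in\Id}M\subset\bigcup_{M\in\Nid}M$ forces $d\in M\subseteq\bigcup_{M\in\Nid}M$, so $d$ again lies in a non-idempotent maximal ideal. Now fix a nonzero nonunit $b$; by finite character $\mx(b)=\{M_1,\dots,M_k\}$ is finite, and every $d\in Z(b)$ satisfies $\mx(d)\subseteq\mx(b)$ since $d\mid b$. Applying the claim to such a $d$ yields a non-idempotent maximal ideal containing $d$ and lying in $\mx(b)$; applying it to $b$ shows that $\{N_1,\dots,N_r\}:=\mx(b)\cap\Nid$ is nonempty.

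Finally, put $\varepsilon:=\min\{\varepsilon_{N_1},\dots,\varepsilon_{N_r}\}>0$. Given $d\in Z(b)$, pick $s$ with $d\in N_s$; then $\nu_{N_s}(d)\geq\varepsilon_{N_s}\geq\varepsilon$, so $\norm{d}=\sum_{M\in\Max(D)}\nu_M(d)\geq\nu_{N_s}(d)\geq\varepsilon$. Thus $\inf S_b\geq\varepsilon>0$, and since $b$ was arbitrary, the equivalence theorem gives that $D$ is a BFD. The only real obstacle is the valuation-theoretic dichotomy of the first paragraph (non-idempotent $M$ $\Longleftrightarrow$ discrete $G_M$); granting that, the argument is pure bookkeeping with finite character and the covering hypothesis.
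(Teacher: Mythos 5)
Your proof is correct and follows the same route as the paper: reduce to showing $\inf S_b>0$ via the earlier equivalence theorem, and get the positive lower bound by observing that the covering hypothesis forces every nonunit divisor of $b$ into a non-idempotent maximal ideal, whose value group is discrete. The paper's own proof is a one-liner asserting $\inf S_b=1$ because every such element lies in an $M$ with $G_M\cong\mathbb{N}_0$; your version merely makes the normalization of the discrete value groups and the bookkeeping over $\mx(b)$ explicit.
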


\begin{proof}  Take $b \in D$ then $\inf S_b=1$, since every $b$ is contained in an $M$ with $G_M\cong \mathbb{N}_{0}.$  Thus $D$ is a BFD.
\end{proof}

Now if the set of idempotent maximal ideals is a singleton set, this idea of the idempotent maximal ideals being covered by the non-idempotent maximal ideals is enough to force the FFD property.

\begin{thm}  Let $D$ be a $1-$dimensional Pr\"ufer domain of finite character with $\Id=\{M\}$.  If $M$ is  covered by non-idempotents, then $D$ is an FFD.\end{thm}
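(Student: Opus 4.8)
The plan is to verify directly the defining condition of an FFD used in this section, namely that $Z(b)$ is finite for every nonzero nonunit $b\in D$; one may, if desired, first invoke the preceding theorem (with $\Id=\{M\}$) to note that $D$ is already a BFD, hence atomic, but the real content is the finiteness of $Z(b)$.

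Fix a nonzero nonunit $b$. By finite character, $\mx(b)=\{M'\in\Max(D):\nu_{M'}(b)>0\}$ is finite, and every element of $\mx(b)$ other than possibly $M$ is non-idempotent since $\Id=\{M\}$; write $\mx(b)\cap\Nid=\{N_1,\dots,N_r\}$, so each $G_{N_i}$ is discrete with value monoid $\mathbb{N}_0$. Any nonunit divisor $d$ of $b$ has $\mx(d)\subseteq\mx(b)$ and $\nu_{M'}(d)\le\nu_{M'}(b)$ for all $M'$, so the tuple $\tau(d):=(\nu_{N_1}(d),\dots,\nu_{N_r}(d))$ lies in the finite set $\prod_{i=1}^{r}\{0,1,\dots,\nu_{N_i}(b)\}$. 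First I would show that $\tau$ separates the elements of $Z(b)$ up to associates; granting this, $\lvert Z(b)\rvert\le\prod_{i=1}^{r}(\nu_{N_i}(b)+1)<\infty$, which is the FFD property.

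For the separation, suppose $d,d'\in Z(b)$ with $\tau(d)=\tau(d')$ and, after possibly swapping them, $\nu_M(d)\le\nu_M(d')$ (both are $0$ if $M\notin\mx(b)$). Since $d,d'$ are supported inside $\{M,N_1,\dots,N_r\}$ and agree on the $N_i$-components, $N(d)\le N(d')$ componentwise, so $d\mid d'$ by the divisibility criterion, and $c:=d'/d\in D^{*}$ has value net supported only at $M$. Here the hypothesis enters: because $M$ is covered by non-idempotents and $\Id=\{M\}$, we have $M\subseteq\bigcup_{N\in\Nid}N$, so no nonzero nonunit of $D$ can be supported solely at $M$ (such an element would lie in $M$ yet in no non-idempotent maximal ideal). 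Hence $c\in U(D)$, so $N(d)=N(d')$, and by $D^{*}/U(D)\cong\text{Norm}(D)$ the elements $d$ and $d'$ are associates; in particular no nonunit divisor of $b$ has $\tau\equiv 0$, and every attainable nonzero tuple is realized by exactly one associate class.

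The main obstacle is precisely the idempotent maximal ideal $M$: its value group $G_M$ is dense in $\mathbb{R}$, so the component $\nu_M(d)$ of a divisor could a priori range over infinitely many values, and no naive bound on the norm net controls $Z(b)$. The role of the coverage hypothesis is exactly to collapse this freedom, forcing $\nu_M(d)$ to be pinned down by the finitely many non-idempotent components via the non-existence of an element supported only at $M$. The remaining ingredients, that the value nets faithfully record divisibility and that $\mx(d)\subseteq\mx(b)$ whenever $d\mid b$, are immediate from the theorems already established in this section, so beyond this key observation the argument is bookkeeping.
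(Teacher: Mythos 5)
Your proposal is correct and is essentially the paper's own argument: the paper also bounds $Z(b)$ by pigeonholing on the finitely many value tuples at the non-idempotent maximal ideals containing $b$, and derives a contradiction from a quotient $d/c$ with $\mx(d/c)=\{M\}$, which is impossible when $M$ is covered by non-idempotents. Your write-up merely phrases this contrapositively as an injectivity (up to associates) of the tuple map $\tau$, which is a cleaner presentation of the same idea.
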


\begin{proof}

Suppose $M$ is  covered by non-idempotents.  Let $b\in D$.  Now if $\mx(b)\subset \Nid$ the number of divisors of $b$ is bounded by $\prod_{M\in \mx(b)} (\nu_M(b)+1)$.  Suppose $b\in M$ and $b$ has infinitely many divisors, then there must exist  $c, d \in Z(b)$ with $\nu_P(c)=\nu_P(d)$ for all $P \in \mx(b)\setminus \{M\}$.  Now without loss of generality we may assume that $\nu_M(c)<\nu_M(d)$.  But now $\mx(\frac{d}{c})=\{M\} $ which is a contradiction.
\end{proof}

Now whether or not this is true if $\Id$ has more than one element is unknown.  The proof presented in \cite{CH2018} will not work in these cases.

It would be nice to see if one can construct a $1-$dimensional Pr\"ufer domain of finite character that is an FFD that contains more than one idempotent maximal ideal.  It is quite easy to construct  a $1-$dimensional Pr\"ufer domain of finite character that is an BFD that contains more than one idempotent maximal ideal, by mimicking the example laid out in \cite{Gr}.

At this time, it is also unknown whether any of these factorization properties can hold in a Pr\"ufer domain of dimension greater than one.

\section{Ideal class (semi)groups}

Let $D$ be a Pr\"ufer domain with fraction field $K$.   We denote the set of fractional ideals of $D$ by $\mathcal{F}(D)$ and the set of principal ideals of $D$ by $\mathcal{P}(D)$.  The ideal class group of $D$ is the quotient group $\mathcal{C}(D)=\mathcal{I}(D)/\mathcal{P}(D)$, where $\mathcal{I}(D)$ is the set of invertible fractional ideals of $D$.  The ideal class (semi)group of $D$ is the quotient group $\mathcal{S}(D)=\mathcal{F}(D)/\mathcal{P}(D)$.  It should be noted that if $D$ is Dedekind then $\mathcal{S}(D)$ is a group.  If $D$ contains noninvertible ideals, which is the case in almost Dedekind and Pr\"ufer domains (that are not Dedekind), then $\mathcal{S}(D)$ is a semigroup.

To study ideal class (semi)groups we first need to discuss how to form a norm on the ideals of a Pr\"ufer domain.  There is more than one way to do this as we will discuss, but we will stick with how the norm was constructed in \cite{CH2018}.

 Let $M$ be a maximal ideal of a Pr\"ufer domain $D$ and let $\gamma \in G_M$, we define

$$^{\gamma}M=\{b\in D : \nu_M(b)\geq \gamma \}.$$
and
$$\overline{^{\gamma}M}=\{b\in D : \nu_M(b) > \gamma \}.$$

Both $^{\gamma}M$ and $\overline{^{\gamma}M}$ are ideals of $D$.  Moreover they can be distinct ideals of $D$, and when this happens, we need to create a mechanism to distinguish these two ideals.  In \cite{CH2018} the surreal numbers, $\mathbb{S}$, were used, but speaking loosely, there are other options available. What is really needed is a big set with a way to create an ``on/off" switch, as we will see presently.

First we need to make some definitions.  As discussed before, we will restrict ourselves to $1-$dimensional Pr\"ufer domains so we can think of the value groups as additive subgroups of the real numbers.

\begin{defn}
    Let $D$ be a $1-$dimensional Pr\"ufer domain and for $M \in \Max(D)$ and an ideal $I$, we define $$T_M(I)=\{\nu_M(b)\}_{b\in I\setminus\{0\}}$$ and we set $$s_M(I)=\inf \, T_M(I).$$
\end{defn} 

We need to distinguish when $s_M(I)$ is in $T_{M}(I)$ and when it is not (hence the idea of an ``on/off" switch).  To do this we use the surreal number $\epsilon=\frac{1}{\omega}$.  Here $\omega$ is the cardinality of the natural numbers.

\begin{defn} Let $D$ be a $1-$dimensional Pr\"ufer domain and let $I$ be a nonzero ideal of $D$.  We define the value of $I$ at the maximal ideal $M$ as
$$\nu_M(I) = \left\{
     \begin{array}{lr}
       s_M(I) & :s_M(I) \in T_M(I)\\
       s_M(I)+\epsilon & : s_M(I) \not\in T_M(I)\\

     \end{array}
   \right.$$
   \end{defn}

    We observe that if $\nu_M(I)=s_M(I) + \epsilon$ we have $\gamma>s_M(I)+\epsilon$ for all $\gamma \in T_M(I)$. For $s, t \in \mathbb{S}$ we say $s \sim_M t$ if and only if  $^t M=\, ^s M$. 
    
     We take full advantage of the fact that $m\epsilon \sim_M n \epsilon$ for all $m, n \in \mathbb{N}$, and if $G_M \cong\mathbb{N}$ then, $\epsilon \sim_M 1$.  
     
     We now assign a value to our ideal for every maximal ideal in $D$.  For $M\in \Max(D)$ and nonzero ideal $I$ of $D$, we will give a value to $I$ from $S_{G_M}=\mathbb{S}/\sim_M$.   We will now use ideals of the form $$^{\gamma}M=\{b\in D : \nu_M(b)\geq \gamma \}$$ where $\gamma \in S_{G_M}$ to construct our norm.

The following is \cite[Lemma 2.7]{CH2018}, the proof of which, although not terribly difficult, is a bit lengthy and benefits from the context of the discussion in \cite{CH2018}.

     \begin{lem}  Let $I$ and $J$ be ideals of a $1-$dimensional  Pr\"ufer domain.   Then $\nu_M(IJ)=\nu_M(I) + \nu_M (J)$ for all $M\in \Max(D)$.
   \end{lem}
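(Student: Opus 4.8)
The plan is to prove the identity $\nu_M(IJ) = \nu_M(I) + \nu_M(J)$ by reducing everything to the local valuation domain $D_M$, where the value group $G_M$ is a subgroup of $\mathbb{R}$ and multiplication of ideals corresponds to addition of infima. First I would observe that since $D$ is Pr\"ufer and $1$-dimensional, localizing at $M$ turns $I$ and $J$ into ideals of the valuation domain $D_M$, and $(IJ)_M = I_M J_M$; moreover $\nu_M$ as defined depends only on the set $T_M(I) = \{\nu_M(b)\}_{b \in I \setminus \{0\}}$, which is unchanged under this localization (every element of $I_M$ has the same $M$-value as a suitable scalar multiple lying in $I$). So it suffices to work entirely inside $D_M$.

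The heart of the matter is the behavior of infima under the sumset. Writing $s_M(I) = \inf T_M(I)$ and $s_M(J) = \inf T_M(J)$, the key point is that $T_M(IJ)$ contains $T_M(I) + T_M(J) = \{x+y : x \in T_M(I), y \in T_M(J)\}$ — because a typical element of $IJ$ is a finite sum $\sum a_k b_k$ with $a_k \in I$, $b_k \in J$, and by the valuation (ultrametric) property the value of such a sum is $\geq \min_k(\nu_M(a_k)+\nu_M(b_k))$, with equality when the minimum is achieved uniquely, and conversely each product $a b$ with $a \in I, b \in J$ lies in $IJ$. From $\inf(A+B) = \inf A + \inf B$ for subsets of $\mathbb{R}$ bounded below, one gets $s_M(IJ) = s_M(I) + s_M(J)$.

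Then I would handle the ``on/off switch'', i.e.\ the $\epsilon$-bookkeeping, by a short case analysis on whether the infima are attained. If $s_M(I) \in T_M(I)$ (say attained by $a$) and $s_M(J) \in T_M(J)$ (attained by $b$), then $ab \in IJ$ realizes the value $s_M(IJ)$, so $\nu_M(IJ) = s_M(IJ) = s_M(I)+s_M(J) = \nu_M(I)+\nu_M(J)$. If at least one infimum is not attained — say $s_M(I) \notin T_M(I)$ — then I claim $s_M(IJ) \notin T_M(IJ)$ as well: any element of $IJ$ with value close to $s_M(IJ)$ would force, via the sumset description, an element of $I$ with value strictly between $s_M(I)$ and $s_M(I)+\delta$ for every $\delta>0$, contradicting non-attainment (one must be a little careful that the minimum in the ultrametric sum is genuinely attained, but this is the standard valuation argument). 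In that case $\nu_M(IJ) = s_M(IJ)+\epsilon = s_M(I)+s_M(J)+\epsilon$, which equals $\nu_M(I)+\nu_M(J)$ after using that $m\epsilon \sim_M n\epsilon$ absorbs the single extra $\epsilon$ coming from the other factor (whether or not the other infimum is attained, the class modulo $\sim_M$ is the same, and if $G_M \cong \mathbb{Z}$ then $\epsilon \sim_M 1$ makes the ideals agree). Finally, one checks the identity is well-defined modulo $\sim_M$, i.e.\ the equality holds at the level of the ideals $^{\gamma}M$, which is immediate from the definitions.

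The main obstacle I anticipate is the non-attainment case at an idempotent maximal ideal, where $G_M$ is dense in $\mathbb{R}$: here one must argue carefully that $\inf T_M(IJ)$ is genuinely not attained and that exactly one ``$+\epsilon$'' is needed (not two), which is precisely where the absorption relation $m\epsilon \sim_M n\epsilon$ is doing essential work. The rest is the routine translation between ideal products and sumsets of value sets. This is exactly the content that the authors note ``benefits from the context of the discussion in \cite{CH2018}''.
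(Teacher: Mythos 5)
The paper itself does not prove this lemma; it cites \cite{CH2018} and moves on, so there is no in-text argument to compare yours against. That said, your overall strategy --- establish $s_M(IJ)=s_M(I)+s_M(J)$ via the sumset of value sets inside $G_M\subseteq\mathbb{R}$, then do a case analysis on attainment of the infima and let the relation $m\epsilon\sim_M n\epsilon$ absorb the possible double $\epsilon$ --- is the right one, and all the cases are accounted for, including the only delicate one (an idempotent $M$ with dense value group).

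The one place where your justification does not hold up as written is the claim that $s_M(I)\notin T_M(I)$ forces $s_M(IJ)\notin T_M(IJ)$. You argue that an element of $IJ$ with value close to $s_M(IJ)$ would produce an element of $I$ with value strictly between $s_M(I)$ and $s_M(I)+\delta$ for every $\delta>0$, ``contradicting non-attainment.'' That is not a contradiction: when the infimum is \emph{not} attained, such elements exist for every $\delta>0$ by the definition of the infimum. The correct (and shorter) argument is: a nonzero element of $IJ$ is a finite sum $\sum_k a_kb_k$ with $a_k\in I$ and $b_k\in J$; non-attainment gives the strict inequality $\nu_M(a_k)>s_M(I)$ for every $k$, hence $\nu_M(a_kb_k)=\nu_M(a_k)+\nu_M(b_k)>s_M(I)+s_M(J)$ for every $k$; and since the sum is finite, $\nu_M\bigl(\sum_k a_kb_k\bigr)\geq\min_k\nu_M(a_kb_k)>s_M(I)+s_M(J)=s_M(IJ)$, so the value $s_M(IJ)$ is never achieved in $T_M(IJ)$. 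With that substitution your proof is complete; the parenthetical worries you raise (uniqueness of the minimizer in the ultrametric inequality, the localization step) are not actually needed, since only the inequality $\nu_M(x+y)\geq\min(\nu_M(x),\nu_M(y))$ and the definition of $T_M$ on $I$ itself enter the argument.
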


   This puts us in a position to define our norm.

   \begin{defn}  Let $D$ be an $1-$dimensional Pr\"ufer domain, and let $I$ be an ideal of $D$.  The norm of $I$ is defined to be
$$\hat{N}(I)=(\nu_M(I))_{M\in \Max(D)}\subseteq \prod_{M\in Max(D)} S_{G_M}.$$
\end{defn}

It is probably worth taking a step back to realize that this norm on the class of almost Dedekind domains would never have to involve a surreal number, because the localizations are discrete; it is impossible for $s_M(I) \not \in T_M(I).$

Now while this construction is interesting in its own right.  It can be a useful tool in studying the ideal class semigroups of Pr\"ufer domains.  Given that an almost Dedekind domain that is not Dedekind must necessarily have an infinite number of maximal ideals, it is close to impossible to completely describe the ideal class semigroup.  Even more problems arise when considering at Pr\"ufer domains that are not almost Dedekind.

With that being said we are able to use the ideal norm described above to prove the following which are, respectively, Theorem 3.7 and Theorem 3.8 from \cite{rH2021}.

\begin{thm}  Let $D$ be an almost Dedekind domain that is not Dedekind with finitely many non-invertible maximal ideals.   If $D$ is atomic then, $\mathcal{C}(D)$ must be of infinite order.  \end{thm}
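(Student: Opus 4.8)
The plan is to argue by contradiction, so suppose $\mathcal{C}(D)$ is finite, say of order $h$. I would first record two structural facts. Since $D$ is almost Dedekind but not Dedekind, some maximal ideal $M$ of $D$ is not invertible, and in a Pr\"{u}fer domain a maximal ideal is invertible exactly when it is finitely generated; thus $M$ is not finitely generated. Second, since $D$ is atomic and has only finitely many non-invertible maximal ideals, the structure theory developed above for this class of almost Dedekind domains shows that $D$ is a BFD, and in particular $D$ satisfies ACCP. It is the ACCP property that I intend to contradict.

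Next I would approximate $M$ from the inside by finitely generated ideals. Choose a nonzero $b_1 \in M$ and then inductively pick $b_{n+1} \in M \setminus (b_1,\dots,b_n)$; this is possible since $I_n := (b_1,\dots,b_n)$ is finitely generated and hence strictly smaller than $M$. Each $I_n$ is a finitely generated ideal of a Pr\"{u}fer domain, hence invertible, so $[I_n] \in \mathcal{C}(D)$, and $I_1 \subsetneq I_2 \subsetneq \cdots$. Because $\mathcal{C}(D)$ is finite, by the pigeonhole principle there are indices $n_1 < n_2 < \cdots$ with $[I_{n_1}] = [I_{n_2}] = \cdots$. For $j < k$ we may then write $I_{n_j} = (x_{j,k}) I_{n_k}$ for some $x_{j,k}$ in the fraction field of $D$. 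The inclusion $I_{n_j} \subseteq I_{n_k}$ together with the invertibility of $I_{n_k}$ (multiply through by $I_{n_k}^{-1}$) forces $(x_{j,k}) \subseteq D$, so $x_{j,k} \in D$; moreover $x_{j,k}$ is a nonunit, since $I_{n_j} \subsetneq I_{n_k}$.

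Setting $x_j := x_{j,j+1}$ and telescoping gives $I_{n_1} = x_1 x_2 \cdots x_k \, I_{n_{k+1}}$ for every $k$, and therefore $b_1 \in (x_1 \cdots x_k)$ for every $k$. Writing $b_1 = x_1 \cdots x_k z_k$ and cancelling, one gets $z_k = x_{k+1} z_{k+1}$, so $(z_1) \subsetneq (z_2) \subsetneq \cdots$ is a strictly ascending chain of principal ideals (strict because each $x_{k+1}$ is a nonunit and $b_1 \neq 0$), contradicting ACCP. The conceptual point driving the proof is that, once the approximating ideals $I_{n_j}$ are nested, the scalars realizing the coincidences of their ideal classes are forced to live in $D$ itself, not merely in the fraction field, so that arbitrarily long products of nonunit scalars divide the fixed element $b_1$ --- which is exactly an ACCP violation. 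I expect the delicate step to be the reduction in the first paragraph: that an atomic almost Dedekind domain with only finitely many non-invertible maximal ideals is a BFD. This is where the finiteness hypothesis does its real work, and it is the point that must be extracted from the structure theory of such domains (equivalently, one rules out disconnectedness, or unbounded factorizations, using the finite set of non-invertible maximal ideals); the remainder of the argument is routine.
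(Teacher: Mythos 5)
Your argument breaks at the very first reduction, and the break is fatal to the stated theorem. You deduce from atomicity plus the finiteness of the set of non-invertible maximal ideals that $D$ is a BFD, hence satisfies ACCP, and the entire contradiction you construct at the end is a contradiction with ACCP, not with atomicity. But the structure theory you are appealing to does not supply this step: the equivalence theorem for almost Dedekind domains with finitely many non-invertible maximal ideals says only that \emph{connected} $\Leftrightarrow$ \emph{ACCP} $\Leftrightarrow$ \emph{BFD}; atomicity is conspicuously absent from that list, and the surrounding discussion explicitly flags the question of whether an atomic almost Dedekind domain must satisfy ACCP as unresolved (the only implication recorded is ACCP $\Rightarrow$ connected, not the converse from atomicity). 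Since atomic domains can fail ACCP in general, a strictly ascending chain of principal ideals $(z_1)\subsetneq(z_2)\subsetneq\cdots$ is not, by itself, a contradiction with the hypothesis you are actually given. You correctly identify this reduction as the ``delicate step,'' but it is not a step that can be extracted from the results quoted here --- it is essentially the open point. The intended proof (deferred to the cited source) instead runs through the ideal norm $\hat{N}$ built from the values $\nu_M(I)$, which is designed precisely to exploit atomicity directly rather than passing through ACCP.

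That said, the second half of your argument is correct and worth keeping: approximating the non-finitely-generated maximal ideal $M$ from inside by invertible ideals $I_n$, using pigeonhole on a finite class group to find infinitely many $I_{n_j}$ in one class, observing that the scalars $x_{j,k}$ realizing $I_{n_j}=(x_{j,k})I_{n_k}$ must lie in $D$ and be nonunits, and telescoping to violate ACCP --- this cleanly proves that a Pr\"ufer domain satisfying ACCP with a non-finitely-generated ideal has infinite class group (equivalently, an ACCP almost Dedekind domain with finite class group is Dedekind). If you replace the hypothesis ``atomic'' by ``ACCP'' (or ``BFD''), your proof is complete and does not even use the finiteness of the set of non-invertible maximal ideals. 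To prove the theorem as stated you would need either to close the atomic-versus-ACCP gap for this class of domains or to rework the contradiction so that it refutes atomicity itself.
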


\begin{thm}  Let $D$ be an atomic Pr\"ufer domain of finite character, that is not Dedekind.   If $D$ is atomic, then $\mathcal{C}(D)$ is of infinite order.
   \end{thm}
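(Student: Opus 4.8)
The plan is to argue by contradiction: assume $\mathcal{C}(D)$ is finite, of order $n$, and produce a violation of the ascending chain condition on principal ideals. Since $D$ is atomic, by the equivalence theorem for $1$-dimensional Pr\"ufer domains of finite character it satisfies ACCP, so any such violation finishes the proof.

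First I would isolate an idempotent maximal ideal. In a Pr\"ufer domain of finite character every ideal of the form ${}^{\gamma}M$ (with $\gamma$ a nonnegative element of $G_M$) is $2$-generated: pick $a_1\in D$ with $\nu_M(a_1)=\gamma$, note that $a_1$ lies in only finitely many maximal ideals, and use prime avoidance --- together with the observation that ${}^{\gamma}M$ is not contained in any maximal ideal $N\ne M$, since a high enough power of an element of $M\setminus N$ lies in ${}^{\gamma}M\setminus N$ --- to find $a_2\in{}^{\gamma}M$ outside all the other maximal ideals through $a_1$; then $(a_1,a_2)={}^{\gamma}M$. Being finitely generated in a Pr\"ufer domain, ${}^{\gamma}M$ is invertible. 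Hence, if $D$ had no idempotent maximal ideal (i.e.\ were almost Dedekind), every nonzero ideal would be a finite product of such invertible ideals ${}^{e}M$, so $D$ would be Noetherian and therefore Dedekind, contrary to hypothesis. So fix an idempotent maximal ideal $M_0$; then $G_{M_0}\subseteq\mathbb{R}$ is a non-discrete subgroup, hence dense.

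Next I would exploit finiteness of $\mathcal{C}(D)$ at $M_0$. For $\gamma\in G_{M_0}$ with $\gamma>0$ the class $[{}^{\gamma}M_0]$ has order dividing $n$, so $({}^{\gamma}M_0)^{n}$ is principal; comparing localizations (value $n\gamma$ at $M_0$, value $0$ at every other maximal ideal) shows $({}^{\gamma}M_0)^{n}={}^{n\gamma}M_0$, so there is a nonunit $d_\gamma\in D$ with $\nu_{M_0}(d_\gamma)=n\gamma$ and $\nu_M(d_\gamma)=0$ for all $M\ne M_0$. Choosing, by density of $G_{M_0}$, a strictly decreasing sequence $\gamma_1>\gamma_2>\cdots>0$ in $G_{M_0}$, we get $d_{\gamma_{i+1}}\mid d_{\gamma_i}$ with no two $d_{\gamma_i}$ associate, so
\[
(d_{\gamma_1})\subsetneq(d_{\gamma_2})\subsetneq(d_{\gamma_3})\subsetneq\cdots
\]
is an infinite strictly ascending chain of proper principal ideals, contradicting ACCP. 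Therefore $\mathcal{C}(D)$ must be infinite.

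I expect the main obstacle to be the structural input of the second paragraph --- that the ``local'' ideals ${}^{\gamma}M_0$ are invertible and multiply as ${}^{\gamma}M_0\cdot{}^{\delta}M_0={}^{\gamma+\delta}M_0$, together with the reduction ruling out the almost Dedekind case --- since this is exactly where finite character is used and where one must keep careful track of whether the infimum $s_{M_0}(I)$ defining $\nu_{M_0}(I)$ is attained; the ideal norm $\hat N$ of \cite{CH2018}, with its surreal-number ``on/off switch'', is the natural bookkeeping device here. Everything downstream (manufacturing the chain from density of $G_{M_0}$ and invoking the atomic $\Leftrightarrow$ ACCP equivalence) is then routine.
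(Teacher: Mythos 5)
The paper does not actually prove this statement; it only cites \cite{rH2021} (``Proofs of both of these theorems can be found in \cite{rH2021}''), so there is no in-text argument to compare against. Judged on its own, your proof is correct and is built entirely from the toolkit the paper assembles: the reduction ``no idempotent maximal ideal $\Rightarrow$ almost Dedekind $\Rightarrow$ (with finite character) Dedekind'' is exactly the remark the authors make after the theorem; the invertibility of ${}^{\gamma}M_0$ via two generators and prime avoidance is where finite character genuinely enters; the identification $({}^{\gamma}M_0)^n={}^{n\gamma}M_0$ by comparing localizations is sound because $\gamma$ is attained by an element of $D$ (so the local ideal $\{\nu_{M_0}\ge\gamma\}$ is principal in $D_{M_0}$); and the final contradiction correctly combines density of the non-discrete rank-one value group $G_{M_0}$, the divisibility criterion $a\mid b \Leftrightarrow N(a)\le N(b)$, and the atomic $\Leftrightarrow$ ACCP equivalence. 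This is almost certainly the same mechanism as in \cite{rH2021}: finiteness of $\mathcal{C}(D)$ would manufacture nonunits whose norms are supported only at $M_0$ with arbitrarily small positive value there, which is incompatible with ACCP.

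One caveat you should make explicit: as stated, the theorem does not say ``$1$-dimensional,'' yet you invoke the atomic $\Leftrightarrow$ ACCP equivalence (stated in the paper only for $1$-dimensional Pr\"ufer domains of finite character) and you use that $G_{M_0}$ embeds densely in $\mathbb{R}$ (rank one, hence Archimedean --- this is also what makes ``a high enough power of $x\in M_0\setminus N$ lands in ${}^{\gamma}M_0$'' work). The surrounding section of the paper declares $1$-dimensionality as a standing hypothesis, so your argument is fine in that context, but you should either add that hypothesis to the statement you are proving or supply a separate argument that an atomic Pr\"ufer domain of finite character satisfies ACCP without the dimension restriction. The surreal-number bookkeeping for $\hat N$ that you mention at the end is not actually needed for your argument, since every ideal you manipulate is of the form ${}^{\gamma}M_0$ with $\gamma$ attained.
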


   The first theorem might not be that surprising since we know that an almost Dedekind domain cannot be of finite character unless it is Dedekind.  The second is perhaps a bit more interesting, as one might expect the finite character property to control the ideal class semigroup.  Proofs of both of these theorems can be found in \cite{rH2021}.

   Of course, this behavior is quite different from what happens in the Dedekind case.  If $D$ is Dedekind then $\mathcal{C}(D)$ is a measure (in some sense) of how far away the domain is from being a UFD.  The smaller the size of $\mathcal{C}(D)$ the ``closer" $D$ is to being a UFD.  Notice that if $D$ is Dedekind, then $D$ is a UFD if and only if $\mathcal{C}(D)$ is trivial.

   To show the difficulty of computing an ideal class semigroup, we will develop and include an example from \cite{rH2021}. An almost Dedekind domain is said to be a sequence domain if it has a countable number of maximal ideals, with all maximal ideals being principal with the exception of one non-invertible maximal ideal.   The following theorem, which can be found in \cite{G} can be used to construct a wide array of almost Dedekind and Pr\"ufer domains.  Here we use it to construct a sequence domain, which is one of the simplest examples of an almost Dedekind domain.

\begin{thm}\label{gi} Let $D$ be a Dedekind domain with quotient field $K$, and let $\{P_i\}_{i=1}^{r}$, $\{Q_i\}_{i=1}^{s}$, and $\{U_i\}_{i=1}^{t}$, where $r\geq 1$, be three collections of distinct maximal ideals of $D$, each with finite residue field.  Then there exists a simple quadratic extension field $K(t)$ of $D$ with $t$ integral over $D$ and separable over $K$ such that if $\bar{D}$ is the integral closure of $D$ in $K(t)$, each $P_i$ is inertial with respect to $\bar{D}$, each $Q_i$ ramifies with respect to $\bar{D}$, and each $U_i$ decomposes with respect to $\bar{D}$.
\end{thm}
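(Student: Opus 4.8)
The plan is to realize the required extension as $K(t)$, where $t$ is a root of a single monic quadratic $f(X)=X^{2}+aX+b\in D[X]$ whose coefficients are produced by the Chinese Remainder Theorem so that, modulo each of the finitely many prescribed primes, $f$ reduces to exactly the shape that forces the desired decomposition type. Throughout I would write $\kappa_{\mathfrak{p}}=D/\mathfrak{p}$ for a maximal ideal $\mathfrak{p}$; since every residue field in sight is finite, hence perfect, Hensel's lemma and Newton-polygon arguments over the completion $\widehat{D_{\mathfrak{p}}}$ are available without fuss.

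First I would set up the local dictionary. If $\mathfrak{p}\nmid\operatorname{disc}(f)=a^{2}-4b$, then $D[t]\otimes_{D}\kappa_{\mathfrak{p}}=\kappa_{\mathfrak{p}}[X]/(\bar f)$ is a separable algebra, so $\mathfrak{p}$ is unramified, $D[t]_{\mathfrak{p}}$ is already maximal and hence equals $\overline{D}_{\mathfrak{p}}$, and the factorization of $\mathfrak{p}$ in $\overline{D}$ mirrors that of $\bar f$ over $\kappa_{\mathfrak{p}}$: thus $\mathfrak{p}$ is inert exactly when $\bar f$ is irreducible over $\kappa_{\mathfrak{p}}$, and $\mathfrak{p}$ decomposes into two distinct degree-one primes exactly when $\bar f$ splits into two distinct linear factors. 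To force ramification at a prime $\mathfrak{p}$ I would instead arrange $\nu_{\mathfrak{p}}(a)\geq 1$ and $\nu_{\mathfrak{p}}(b)=1$; the Newton polygon of $f$ over $D_{\mathfrak{p}}$ is then the single segment from $(0,1)$ to $(2,0)$ of slope $-1/2$, so $f$ is irreducible over $\widehat{K_{\mathfrak{p}}}$ and $\mathfrak{p}$ is totally (possibly wildly) ramified. Accordingly I would prescribe: for each $P_{i}$, a monic quadratic irreducible over the finite field $\kappa_{P_{i}}$ (one always exists) as the target reduction of $f$; for each $U_{j}$, a product $(X-\alpha_{j})(X-\beta_{j})$ with $\alpha_{j}\neq\beta_{j}$ in $\kappa_{U_{j}}$; and for each $Q_{k}$, the congruences $a\equiv 0\pmod{Q_{k}}$ and $b\equiv\pi_{k}\pmod{Q_{k}^{2}}$ for a uniformizer $\pi_{k}\in Q_{k}\setminus Q_{k}^{2}$.

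Since the ideals $P_{i}$, $U_{j}$, $Q_{k}^{2}$ are pairwise comaximal, the Chinese Remainder Theorem yields $a,b\in D$ meeting all these conditions at once; put $f=X^{2}+aX+b$ and let $t$ be a root. Using $r\geq 1$: the reduction $\bar f$ is irreducible over $\kappa_{P_{1}}$, so $f$ has no root in $D_{P_{1}}$, and since $D$ is integrally closed a root of the monic $f$ in $K$ would lie in $D\subseteq D_{P_{1}}$; hence $f$ is irreducible over $K$ and $[K(t):K]=2$. Moreover $\operatorname{disc}(f)\notin P_{1}$, so $\operatorname{disc}(f)\neq 0$ and $t$ is separable over $K$, while $f$ monic makes $t$ integral over $D$. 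Let $\overline{D}$ be the integral closure of $D$ in $K(t)$; it is again Dedekind, as recalled earlier for integral closures in finite extensions. By construction $\operatorname{disc}(f)$ is a unit at every $P_{i}$ and every $U_{j}$, so applying the dictionary of the previous paragraph at each prime in turn gives that every $P_{i}$ is inert, every $U_{j}$ decomposes, and every $Q_{k}$ ramifies in $\overline{D}$, as required.

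The \emph{main obstacle} is residue characteristic $2$. There one cannot detect ramification from $\mathfrak{p}\mid\operatorname{disc}(f)$, and irreducible quadratics over $\kappa_{\mathfrak{p}}$ have Artin--Schreier shape $X^{2}+X+c$ (with $c$ of nonzero trace to $\mathbb{F}_{2}$) rather than $X^{2}-\delta$; but such quadratics still exist over every finite field, and the Newton-polygon computation forcing ramification from $\nu_{Q_{k}}(a)\geq 1$, $\nu_{Q_{k}}(b)=1$ is characteristic-free, so the construction still goes through. The one bookkeeping point to watch is that the decomposition of each $\mathfrak{p}$ in $\overline{D}$ is governed by $f$ only modulo a suitable power of $\mathfrak{p}$ — modulo $\mathfrak{p}$ itself in the unramified cases (via Hensel) and modulo $Q_{k}^{2}$ in the ramified case (via the Newton polygon) — which is precisely why the CRT congruences above suffice. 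Discharging these two items completes the argument.
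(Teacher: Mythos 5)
Your argument is correct, and it is the standard proof of this statement. Note that the paper itself offers no proof to compare against: Theorem \ref{gi} is imported verbatim from the cited source \cite{G} purely as a construction tool for Example \ref{mess}, so the only meaningful assessment is of your argument on its own terms. On those terms it holds up: the Chinese Remainder Theorem applies because the moduli $P_i$, $U_j$, $Q_k^2$ are pairwise comaximal; the Kummer--Dedekind dictionary at primes not dividing $\operatorname{disc}(f)$ correctly yields inertia (irreducible reduction) and decomposition (distinct linear factors); the Eisenstein/Newton-polygon condition $\nu_{Q_k}(a)\geq 1$, $\nu_{Q_k}(b)=1$ forces ramification independently of residue characteristic; and you correctly isolate where the hypothesis $r\geq 1$ enters (irreducibility of $f$ over $K$ via reduction mod $P_1$, and nonvanishing of the discriminant, hence separability). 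Your treatment of residue characteristic $2$ is also right: an irreducible or split-with-distinct-roots monic quadratic over a finite field of characteristic $2$ necessarily has nonzero linear coefficient, so $\operatorname{disc}(f)\equiv a^2$ is a unit at those primes and the unramified dictionary still applies. The only cosmetic quibble is the parenthetical claim that irreducible quadratics in characteristic $2$ ``have Artin--Schreier shape''; they need not literally have that form, but all your argument requires is the existence of some monic irreducible quadratic over each finite residue field, which is not in doubt.
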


\begin{ex}\label{mess}
Let $D=\mathbb{Z}_{(q)}$ for some prime $q\in\mathbb{N}$. Let $K$ denote the quotient field of $D$. We begin the construction by finding a quadratic extension of $D$ where $(q)$ splits into two distinct (principal) primes, say $K_1$.  In $D_1$, the integral closure of $D$ in $K_1$, we have $(q)=(q_{1})(\omega_1)$ where $(q_{1})$ and $(\omega_1)$ are distinct primes. At the second stage we find $K_2$, a quadratic extension of $K_1$ in which $q_1$ is inert and $\omega_1$ splits. In $D_2$, the integral closure of $D_1$ in $K_2$, we have $(\omega_1)=(q_2)(\omega_2)$. Inductively in $D_n$ we have primes $q_1,q_2,\cdots,q_n,\omega_n$, and using Theorem \ref{gi} we construct $K_{n+1}$ a quadratic extension of $K_n$ such that in $D_{n+1}$ we have primes $q_1,q_2,\cdots,q_n,q_{n+1},\omega_{n+1}$. If we define $D^\infty:=\bigcup_{n=1}^\infty D_n$ it can be shown that $D^\infty$ is almost Dedekind, and even more, is a sequence domain. $\text{MaxSpec}(D^\infty)=\{ (q_n)\vert n\in\mathbb{N}\}\bigcup\{M\}$, where $M=(\omega_1,\omega_2,\omega_3,\ldots)$ is the unique maximal ideal that is not finitely generated. For more on sequence domains see \cite{L1997}.
\end{ex}

In order to calculate the ideal class semigroup of $D^\infty$ we use the fact that sequence domains are B\'ezout (meaning that $\mathcal{C}(D^\infty)$ is trivial).  From the construction it is not difficult to see that every element of $D^\infty$ is bounded, thus $D^\infty$ is an SP-domain, and we know that every ideal in an SP-domain factors into finite product of radical ideals (\cite{L1997}).  Thus we know the set of radical ideals will generate the ideal class semigroup of $D^\infty$.  

From Example \ref{mess}, which of these radical ideals correspond to non-invertible ideals? Let $\{a_k\}_{k=1}^{\infty}$ be a non-convergent sequence of zeros and ones; that is $a_k=0, 1$ for all $i$, and moreover the sequence never becomes constant.  Now the collection of all sequences of this form is uncountable.  For each possible sequence $\{a_k\}$, we define 
$s_i=j$, where $j$ is the index of the entry in $\{a_k\}_{k=1}^\infty$ containing the $i^{\text{th}}$ occurrence of $1$, and we further declare that $S:=\{s_i\}_{i=1}^\infty.$

Now let $$I_S=(q, \frac{q}{q_{s_1}}, \frac{q}{q_{s_1}q_{s_2}}, \frac{q}{q_{s_1}q_{s_2}q_{s_3}}, \cdots)$$ be a non-finitely generated ideal.  We note that $I$ is contained in $(q_j)$ for all $j$ such that the $j^{\text{th}}$ entry of $\{a_k\}$ is $0$ (which is an infinite set). Now for a $b\in D^\infty$ that is contained in infinitely many maximal ideals, there exists an $N\in \mathbb{N}$ such that $b\in (q_n)$ for all $n\geq N$. If the sequence becomes constant, then we obtain either an ideal of the form $I=(\omega_n)M \equiv M \pmod{\mathcal{P}(D^\infty)}$ or an ideal that is a finite product of some of the primes $(q_m)$.

 Thus, the original $I_S$ is not invertible, and moreover for every non-convergent sequence we get a non-invertible ideal. It should also be pointed out that $I_S \subset M$ for all $S$.
  
  We claim that $H=\{I_{S_\lambda}\vert \lambda \in \Lambda\}\cup{\{M\}}$ is a generating set for the non-identity elements of the ideal class semigroup, where $\Lambda$ is some uncountable index set corresponding to the non-convergent sequences of zeros and ones.   First note if $S_\lambda \triangle S_\gamma=(S_\lambda \setminus S_\gamma) \cup (S_\gamma \setminus S_\lambda)$ is finite, then $I_{S_{\lambda}}\equiv I_{S_{\gamma}} \pmod{\mathcal{P}(D)}$.  Thus we are not claiming that $H$ is a minimal generating set.
  
  \begin{thm}  Let $I\subset D^\infty$ be a non-invertible ideal, then $I\equiv I_1 I_2\cdots I_k \pmod{\mathcal{P}(D^\infty)}$ for some $I_1, I_2, \cdots, I_k \in H$.   This representation is not unique and the ideals in the product may not be distinct.
  \end{thm}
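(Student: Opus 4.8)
The plan is to exploit the fact, recorded above, that $D^\infty$ is an SP-domain, which reduces the statement to a classification of the radical ideals of $D^\infty$. Since $D^\infty$ is an SP-domain, the non-invertible ideal $I$ factors as a finite product $I=R_1R_2\cdots R_k$ of radical ideals, so in the ideal class semigroup $[I]=[R_1]\cdots[R_k]$. Hence it suffices to prove that every radical ideal $R$ of $D^\infty$ is congruent modulo $\mathcal{P}(D^\infty)$ to $M$, to some $I_{S_\lambda}$, or to the unit ideal; discarding the principal factors (not all $R_j$ can be principal, since $I$ is not) then presents $[I]$ as a nonempty product of classes of elements of $H$.

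The technical core is a structural observation about $D^\infty$. Every $x\in D^\infty$ lies in some $D_N$, and from the quadratic tower of Example~\ref{mess} together with the fact that the base ring $\mathbb{Z}_{(q)}$ has a unique nonzero prime, $D_N$ is a semilocal PID whose only nonzero primes are $(q_1),\dots,(q_N),(\omega_N)$. Writing $(x)=(q_1)^{a_1}\cdots(q_N)^{a_N}(\omega_N)^{b}$ in $D_N$ and passing to $D^\infty$---where $\omega_N$ has valuation $1$ at $M$ and at every $(q_n)$ with $n>N$, and valuation $0$ at $(q_1),\dots,(q_N)$---yields $\nu_{q_n}(x)=\nu_M(x)=b$ for all $n>N$. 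In particular, any ideal lying in infinitely many of the $(q_n)$ lies in $M$. Since $\Max(D^\infty)=\{(q_n):n\in\mathbb{N}\}\cup\{M\}$ and a nonzero radical ideal is the intersection of the maximal ideals containing it, every radical ideal of $D^\infty$ equals $\prod_{n\in T}(q_n)$ with $T$ finite, or $M\cap\bigcap_{n\in T}(q_n)$ for some $T\subseteq\mathbb{N}$.

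Next I would carry out the case analysis, using throughout that $M$ is comaximal with every $(q_n)$, that an element of the fraction field with all valuations nonnegative lies in $D^\infty$, and that an ideal is determined by its valuation vector $\hat{N}$. If $R=\prod_{n\in T}(q_n)$ with $T$ finite, then $R$ is principal. If $R=M\cap\bigcap_{n\in T}(q_n)$ with $T$ finite, then $R=M\cdot\prod_{n\in T}(q_n)$ with the second factor principal, so $R\equiv M$. If $R=M\cap\bigcap_{n\in T}(q_n)$ with $S:=\mathbb{N}\setminus T$ finite, then $q/\prod_{n\in S}q_n\in D^\infty$ generates $R$, so $R$ is principal. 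Finally, if $R=M\cap\bigcap_{n\in T}(q_n)$ with both $T$ and $S:=\mathbb{N}\setminus T$ infinite, then $S$ is the support of a non-convergent sequence of zeros and ones, $I_S\in H$, and a computation of valuation vectors shows $\hat{N}(R)=\hat{N}(I_S)$---both are $1$ at $M$, $1$ at $(q_n)$ for $n\notin S$, and $0$ at $(q_n)$ for $n\in S$---whence $R=I_S$. This exhausts the radical ideals. For the non-uniqueness and repeated factors it is enough to observe that $M^2=M\cdot M$, that $I_S\equiv I_{S'}$ whenever $S\triangle S'$ is finite, and that $M\cdot I_S=I_{S_1}I_{S_2}$ for $S_1=S\cup A$ and $S_2=S\cup B$, where $A\sqcup B=\mathbb{N}\setminus S$ is any splitting into two infinite sets.

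I expect the main obstacle to be the structural observation of the second paragraph---that the valuations of an arbitrary element at the ``tail'' primes $(q_n)$ all coincide with its valuation at $M$---together with the accompanying valuation-vector computation for $I_S$, which shows the listed generators of $I_S$ already reach every element of $M\cap\bigcap_{n\notin S}(q_n)$ rather than a proper subideal. Both rest on the fact that $\mathbb{Z}_{(q)}$ has a single nonzero prime, forcing each $D_N$ to be a semilocal PID with an explicitly known spectrum; once that is available, the remaining steps are routine bookkeeping with valuation vectors and comaximal ideals.
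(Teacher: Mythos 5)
Your proposal is correct and follows the same route the paper sketches (the paper itself only outlines the argument and defers the details to the cited reference): use the SP-property to factor $I$ into radical ideals, classify the radical ideals of $D^\infty$ up to principal equivalence as the unit ideal, principal ideals, $M$, or the $I_S$, and verify equality of ideals by comparing valuation vectors. The details you supply --- in particular the observation that the tail valuations $\nu_{q_n}(x)$ for $n$ large agree with $\nu_M(x)$, which forces any ideal lying in infinitely many $(q_n)$ into $M$ --- are exactly the right technical underpinnings and check out.
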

  
  This gives a description of a generating set for $\mathcal{S}(D^\infty).$  The equivalence classes can be described under the relation  $S_\lambda \equiv S_\gamma$ if and only if $S_\lambda \triangle S_\gamma$ is finite, and with $M$ included this forms our generating set; a proof of this can be found in \cite{rH2021}.

\section*{Acknowledgment}

The authors are truly grateful to the referee for a detailed and helpful report that was produced with incredibly baffling speed.

\bibliography{biblio2}{}
\bibliographystyle{plain}

\end{document}